\numberwithin{equation}{section}
\theoremstyle{plain}
\newtheorem{Thm}{Theorem}[section]
\newtheorem{Lem}[Thm]{Lemma}
\newtheorem{Prop}[Thm]{Proposition}
\theoremstyle{definition}
\newtheorem{Def}[Thm]{Definition}
\newtheorem{Rem}[Thm]{Remark}
\newtheorem*{Acknowledgements}{Acknowledgements}
\newtheorem*{Conflict}{Conflict of interest}
\newcommand\n{\mathbf{n}}
\newcommand\sn{\partial\mathbf{n}}
\title{New lower bounds for the (near) critical Ising and $\varphi^4$ models' two-point functions}
\begin{document}

\author{Hugo Duminil-Copin\footnote{Institut des Hautes \'Etudes Scientifiques, \url{duminil@ihes.fr}}\ \footnotemark[2]\footnote{Université de Genève, \url{hugo.duminil@unige.ch}, \url{romain.panis@unige.ch}} , \, Romain Panis$^{\dag}$\footnote{Institut Camille Jordan, \url{panis@math.univ-lyon1.fr}}\footnotemark[2]}
\maketitle

\begin{abstract} We study the nearest-neighbour Ising and $\varphi^4$ models on $\mathbb Z^d$ with $d\geq 3$ and obtain new lower bounds on their two-point functions at (and near) criticality. Together with the classical infrared bound, these bounds turn into up to constant estimates when $d\geq 5$. When $d=4$, we obtain an ``almost'' sharp lower bound corrected by a logarithmic factor. As a consequence of these results, we show that $\eta=0$ and $\nu=1/2$ when $d\geq 4$, where $\eta$ is the critical exponent associated with the decay of the model's two-point function at criticality and $\nu$ is the critical exponent of the correlation length $\xi(\beta)$. When $d=3$, we improve previous results and obtain that $\eta\leq 1/2$. As a byproduct of our proofs, we also derive the blow-up at criticality of the so-called bubble diagram when $d=3,4$.
\end{abstract}

\section{Introduction}

 We are interested in two ferromagnetic and real-valued spin models on $\mathbb Z^d$ that are amongst the most studied in statistical mechanics: the Ising model and the 
(discrete) $\varphi^4$ model. These models are formally defined as follows: given $\Lambda\subset \mathbb Z^d$ finite, $\mathcal{S}\subset \mathbb R$, a probability measure $\rho$ on $\mathcal S$, and an inverse temperature $\beta\geq 0$, we define a probability measure on $\mathcal{S}^\Lambda$ according to the formula
\begin{equation}
    \langle F(\tau)\rangle_{\Lambda,\rho,\beta}:=\frac{1}{\mathbf{Z}_{\Lambda,\rho,\beta}}\int_{\mathcal{S}^\Lambda}F(\tau)\exp\Big(\beta\sum_{\substack{x,y\in \Lambda\\ x\sim y}}\tau_x\tau_y\Big)\prod_{x\in \Lambda}\mathrm{d}\rho(\tau_x),
\end{equation}
where $F:\mathcal S^\Lambda\rightarrow \mathbb R$, $\mathbf{Z}_{\Lambda,\rho,\beta}$ is the \emph{partition function} of the model ensuring that $\langle 1\rangle_{\Lambda,\rho,\beta}=1$, and $x\sim y$ means that $|x-y|_2=1$ (where $|\cdot|_2$ is the $\ell^2$ norm on $\mathbb R^d$). The Ising model corresponds to choosing $\mathcal{S}=\lbrace -1,1\rbrace$ and $\rho(\mathrm{d}x)=\tfrac{1}{2}(\delta_{-1}+\delta_1)$ above; while the $\varphi^4$ model corresponds to choosing $\mathcal{S}=\mathbb R$ and $\rho=\rho_{g,a}$ where $g>0$, $a\in \mathbb R$, and 
\begin{equation}
    \rho(\mathrm{d}t)=\rho_{g,a}(\mathrm{d}t)=\frac{e^{-gt^4-at^2}\mathrm{d}t}{\int_{\mathbb R }e^{-gs^4-as^2}\mathrm{d}s}.
\end{equation}
 In the case of the Ising model (resp.~the $\varphi^4$ model), we write $\sigma$ (resp.~$\varphi$) instead of $\tau$ for the spin (resp.~field) variable. We drop the subscript $\rho$ in the above notations.

It is a well-known fact (see \cite{griffiths1967correlations}) that the measures $\langle \cdot\rangle_{\Lambda,\beta}$ admit a weak limit when $\Lambda\nearrow\mathbb Z^d$. We denote it by $\langle \cdot \rangle_{\beta}$. When $d\geq 2$, the model undergoes a (second-order) phase transition (see \cite{aizenman1986critical,ADCS,gunaratnam2022random}) at a critical parameter $\beta_c=\beta_c(\rho)\in(0,\infty)$ that can be defined as follows (see \cite{aizenman1987phase})
\begin{equation}
	\beta_c:=\inf\Big\{\beta\geq 0: \: \chi(\beta):=\sum_{x\in \mathbb Z^d}\langle \tau_0\tau_x\rangle_\beta=\infty\Big\}.
\end{equation}

The connection between the Ising model and the $\varphi^4$ model is anticipated to be highly intricate, with both models believed to fall within the same universality class. Renormalisation group arguments, as detailed in \cite{G70, K93} and the recent book \cite{BBS19}, suggest that many of their properties, including \emph{critical exponents}, precisely coincide at their respective critical points. Griffiths and Simon \cite{simon1973varphi} laid the groundwork for these profound connections by demonstrating that the $\varphi^4$ model emerges as a specific near-critical scaling limit of a collection of mean-field Ising models. This allows for a rigorous transfer of numerous useful properties from the Ising model, such as correlation inequalities, to the $\varphi^4$ model. Conversely, the Ising model can be derived as a limit of the $\varphi^4$ model using the following (weak) convergence:
\begin{equation}
    \frac{\delta_{-1}+\delta_1}{2}=\lim_{g\rightarrow \infty}\frac{e^{-g(\varphi^2-1)^2+g}\textup{d}\varphi}{\int_{\mathbb R }e^{-gs^4+2gs^2}\mathrm{d}s}.
\end{equation}

The upper critical dimension of these models should be equal to $4$ meaning that they should exhibit \emph{mean-field behaviour} above dimension $d\geq 5$, and a \emph{marginal} (i.e.\ with potential logarithmic corrections) mean-field behaviour in dimension $d=4$. In such setups, the expected behaviour of the models undergoes a notable simplification, where lattice geometry ceases to exert a significant influence. A prominent approach to investigating the mean-field regime involves the computation of the critical exponents of the model. Despite substantial progress, determining these exponents systematically in the mean-field regime remains a challenging task. Noteworthy methods such as the \emph{lace expansion} and the \emph{renormalisation group method} have emerged as powerful alternatives, showcasing significant successes. In the realm of spin models on $\mathbb{Z}^d$, their success covers the weakly-coupled \cite{gawedzki1985massless,bauerschmidt2014scaling,slade2016critical,BBS19,michta2023boundary}, high-dimensional \cite{sakai2007lace,sakai2015application,sakai2022correct}, or sufficiently spread-out \cite{sakai2007lace,chen2015critical,chen2019critical,sakai2022correct} setups. Other milestones toward the study of the mean-field regime of these models include the proofs of triviality of the scaling limits at criticality in dimension $d\geq 5$ independently obtained by Aizenman \cite{A} and Fröhlich \cite{frohlich1982triviality}, together with the recent work of Aizenman and Duminil-Copin \cite{ADC} which establishes the corresponding result in dimension $d=4$ (see also \cite{Pan23+} for a treatment of models of \emph{effective} dimension $d_{\text{eff}}\geq 4$). 

The situation in dimension $d=3$ is more complicated and much less well understood. Obtaining a rigorous understanding of the critical $3d$ Ising model remains one of the main challenges of statistical mechanics. In the physics literature, recent progress has been made regarding this problem using the so-called \emph{conformal bootstrap}, see \cite{el2012solving,el2014solving,rychkov2017non}.

In this work, we study the critical exponent related to the decay of the model's two-point function at criticality when $d\geq 3$. It is expected that at criticality the two-point function of the above models decays algebraically, thus justifying the introduction of the critical exponent $\eta$ defined as follows: for all $x\in \mathbb Z^d\setminus\lbrace 0\rbrace$,
\begin{equation}
    \langle \tau_0\tau_x\rangle_{\beta_c} =\frac{1}{|x|^{d-2+\eta-o(1)}},
\end{equation}
where $|\cdot|$ denotes the infinite norm on $\mathbb R^d$ and $o(1)$ is a quantity tending to zero as $|x|$ tends to infinity.
Since these models are reflection positive (see \cite{FSS,FILS,B}), the \emph{infrared bound} and the Messager--Miracle-Solé (MMS) inequalities\footnote{To be more precise, it relies on the following simple consequences of the MMS inequalities which hold for all $\beta>0$ (see \cite{MMS} or e.g.~\cite[Proposition~5.1]{ADC}): $(i)$ the sequence $(\langle \tau_0\tau_{k\mathbf{e}_1}\rangle_\beta)_{k\geq 0}$ is decreasing; $(ii)$ for any $x\in \mathbb Z^d$, one has 
\begin{equation*}
	\langle \tau_0\tau_{(|x|_1,0_\bot)}\rangle_\beta
\leq\langle \tau_0\tau_x\rangle_\beta\leq \langle \tau_0\tau_{(|x|,0_\bot)}\rangle_\beta,
\end{equation*} where $|\cdot |_1$ denotes the $\ell^1$ norm on $\mathbb R^d$, and where $0_\bot\in \mathbb Z^{d-1}$ is null vector.} \cite{MMS} provide the existence of $C=C(d)>0$ such that for all $\beta\leq \beta_c$, and all $x\in \mathbb Z^d\setminus \lbrace 0\rbrace$,
\begin{equation}\label{eq: Infrared bound}
\langle \tau_0\tau_x\rangle_\beta\leq \frac{C}{|x|^{d-2}}.
\end{equation}
Note that this bound does not provide any interesting information when $d=2$.
Moreover, as a consequence of the Simon--Lieb inequality \cite{simon1980correlation,lieb2004refinement}, Simon proved the existence of $c=c(d)>0$ such that for all $x\in \mathbb Z^d\setminus\lbrace 0\rbrace$, 
\begin{equation}\label{eq: Lower bound naive}
    \langle \tau_0\tau_x\rangle_{\beta_c}\geq \frac{c}{|x|^{d-1}}.
\end{equation}
These two results show that (if it exists) the critical exponent $\eta$ satisfies $0\leq \eta \leq 1$. We now survey existing results regarding the computation of $\eta$.

In dimension $d=2$, the transfer matrix formalism leads to exact solutions of the Ising model \cite{onsager1944crystal,kaufman1949crystal,yang1952spontaneous}. This formalism was later used by Wu \cite{wu1966theory,mccoy1973two} to demonstrate that $\eta=1/4$. Much more information was obtained on the two-point function through the proof of conformal invariance of Smirnov \cite{smirnov2010conformal}, see for instance \cite{chelkak2015conformal} for a proof of conformal invariance of the spin correlations.

When $d=3$, the results are confined to the bounds $0\leq \eta\leq 1$. However, the conformal bootstrap method provides precise predictions \cite{kos2016precision} on the value of $\eta$, with the most accurate numerical estimate being $\eta\approx 0.0362978(20)$.

When $d\geq 4$, the mean-field regime can be derived. Lace expansion methods \cite{sakai2007lace,sakai2015application,sakai2022correct} were applied in very large dimensions to obtain, not only that for both models $\eta=0$, but also exact asymptotics for the critical two-point function, showing that it is equivalent (at large scales) to $A/|x|_2^{d-2}$ (where $A>0$ is a model-dependent constant). For the case of the weakly-coupled $\varphi^4$ model (i.e.~with small coupling $g$), lace expansion was successfully implemented for $d\geq 5$ in \cite{brydges2021continuous}. The renormalisation group method was applied up to dimension $d=4$ to obtain exact asymptotics in this setup. More precisely, it was shown in \cite{slade2016critical} that for the weakly-coupled $\varphi^4$ model in dimension $4$,
\begin{equation}\label{eq: exact asymptotic d=4 wc phi4}
	\langle \varphi_0\varphi_x\rangle_{\beta_c}=\frac{A}{|x|_2^2}(1+o(1)),
\end{equation}
where $A>0$ and $o(1)$ tends to $0$ as $|x|_2$ tends to infinity. Away from these perturbative regimes, the best bounds on $\eta$ are the ones given by \eqref{eq: Infrared bound} and \eqref{eq: Lower bound naive}.

In this paper, we obtain new (near) critical lower bounds on the two-point functions of these models when $d\geq 3$. In particular, we establish a sharp lower bound when $d\geq 5$, and an almost sharp lower bound (with a logarithmic error\footnote{Following the universality hypothesis, we expect the critical two-point functions of the four-dimensional models of interest to behave as in \eqref{eq: exact asymptotic d=4 wc phi4}.}) when $d=4$, showing that $\eta=0$ in these dimensions. We also obtain a new bound on $\eta$ when $d=3$: if it exists, $\eta\leq 1/2$.
When $d\geq 4$, these results allow us to compute the critical exponent $\nu$ associated with the blow-up near criticality of the correlation lengths $L(\beta),\: \xi(\beta)$, and $\xi_p(\beta)$ for $p>0$ (see \eqref{eq: def L beta}, \eqref{eq: def correlation length}, and \eqref{eq: def correlation length p} below). To the best of our knowledge, this result is new in our setup. Let us mention that the exponents $\eta$ and $\nu$ are usually more complicated to derive than the exponents $\alpha,\beta,\delta,\gamma$ (for the second derivative of the free energy, the magnetisation, and the susceptibility, see \cite{A,aizenman1983renormalized,aizenman1986critical,bauerschmidt2014scaling}) as they depend on the graph metric rather than intrinsic distances.
 Although only stated in the case of the Ising and the $\varphi^4$ models, we will show that these results extend for measures $\rho$ that belong to the Griffiths--Simon class of measures, see Section \ref{section: Gs class of measures}.

\subsection{The main theorem: a new inequality for two-point functions} Before stating the main results of this work, we introduce the distance below which, for $\beta<\beta_c$, the model recovers critical features.  The following distance, called the \emph{sharp length}, was introduced in \cite{DCT,Pan23+}.

\begin{Def}[Sharp length] Let $\rho$ correspond to either the Ising or the $\varphi^4$ model. Let $\beta>0$. Let $S$ be a finite subset of $\mathbb Z^d$ containing $0$ and set
\begin{equation}
    \varphi_{\rho,\beta}(S):=\beta\sum_{\substack{x\in S\\ y\notin S, \: y\sim x}}\langle \tau_0\tau_x\rangle_{S,\rho,\beta}.
\end{equation}
Define the sharp length by
\begin{equation}\label{eq: def L beta}
    L(\beta)=L(\rho,\beta):=\inf\left\lbrace k\geq 1: \:\exists S\subset \mathbb Z^d,\: 0\in S, \: \textup{diam}(S)\leq 2k, \: \varphi_{\rho,\beta}(S)< 1/2\right\rbrace,
\end{equation}
where $\textup{diam}(S):=\max \lbrace |x-y|, \: x,y\in S\rbrace$. Note that $L(\beta_c)=\infty$ (see \cite[Section~3.6]{Pan23+}).
\end{Def}
With this definition at hand, the Simon--Lieb inequality together with the infrared bound \eqref{eq: Infrared bound} yield the following result\footnote{For sake a completeness, here is a proof. Let $\beta\leq \beta_c$ and $S\subset \mathbb Z^d$, finite, containing $0$, of diameter smaller than $2L(\beta)$, and such that $\varphi_{\rho,\beta}(S)<\tfrac{1}{2}$. By \eqref{eq: Infrared bound}, we may restrict ourselves to the case $|x|>2L(\beta)$. Iterating the Simon--Lieb inequality $k:=\lfloor |x|/2L(\beta)\rfloor-1$ times with translates of $S$ gives
\begin{equation*}
	\langle \tau_0\tau_x\rangle_{\beta}\leq \varphi_{\rho,\beta}(S)^k\max\{\langle \sigma_y\sigma_x\rangle_\beta : y\notin \Lambda_{L(\beta)}(x)\}\leq 2^{-k}\frac{C}{L(\beta)^{d-2}},
\end{equation*}
where we used \eqref{eq: Infrared bound} in the second inequality. Equation \eqref{eq: near-critical upper bound} follows from choosing $c$ appropriately.}: there exist $c,C>0$ such that, if $\beta\leq \beta_c$ and $x\in \mathbb Z^d\setminus \lbrace 0\rbrace$,
\begin{equation}\label{eq: near-critical upper bound}
    \langle \tau_0\tau_x\rangle_\beta\leq C\Big(\frac{1}{|x|\wedge L(\beta)}\Big)^{d-2}\exp\Big(-c\frac{|x|}{L(\beta)}\Big).
\end{equation}

Our main result is the following inequality. If $n\geq 1$, introduce the hyperplane $\mathbb H_n:=\lbrace x\in \mathbb Z^d: \: x_1=n\rbrace$ and denote by $\mathcal{R}_n$ the orthogonal reflection with respect to $\mathbb H_n$. Also introduce the box $\Lambda_n:=[-n,n]^d\cap \mathbb Z^d$.
\begin{Thm}\label{prop: intermediate step lower bound} Let $d\geq 3$. There exist $c_0,N_0>0$ such that for all $\beta\leq \beta_c$ and for all $N_0\leq n \leq L(\beta)$,
\begin{equation}\label{eq: prop intermediate step lower bound}
    \beta\sum_{\substack{x,y\in \Lambda_n\\y\sim x}}\left(\langle \tau_0\tau_x\rangle_{\beta}-\langle \tau_0\tau_{\mathcal{R}_n(x)}\rangle_{\beta}\right)\langle \tau_y \tau_{\mathcal{R}_n(y)}\rangle_\beta\geq c_0.
\end{equation}
\end{Thm}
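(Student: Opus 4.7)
The plan is to reduce \eqref{eq: prop intermediate step lower bound} to the defining property of the sharp length: for every $n<L(\beta)$, $\varphi_{\rho,\beta}(\Lambda_n)\geq \tfrac{1}{2}$. A preliminary observation is that, by the MMS inequalities, the quantity $F(x):=\langle\tau_0\tau_x\rangle_\beta-\langle\tau_0\tau_{\mathcal{R}_n(x)}\rangle_\beta$ is non-negative for every $x\in\Lambda_n$, since $\mathcal{R}_n(x)$ is at least as far from $0$ as $x$ is in both the $\ell^1$ and the $\ell^\infty$ norms. Each summand in the LHS is therefore non-negative, so any lower bound obtained by restricting to a sub-family of edges or by replacing factors with pointwise lower bounds is permissible.

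The second ingredient is reflection positivity across $\mathbb{H}_n$ applied to $\tau_0$ and $\tau_y$, both in the half-space $\{x_1\leq n\}$. The reflection-positive Cauchy--Schwarz yields
\[
\langle\tau_0\tau_{\mathcal{R}_n(y)}\rangle_\beta^{\,2}\ \leq\ \langle\tau_0\tau_{\mathcal{R}_n(0)}\rangle_\beta\;\langle\tau_y\tau_{\mathcal{R}_n(y)}\rangle_\beta,
\]
which provides a pointwise lower bound on $\langle\tau_y\tau_{\mathcal{R}_n(y)}\rangle_\beta$ converting reflected two-point functions into correlations based at the origin that can be matched with the boundary terms in $\varphi_\beta(\Lambda_n)$.

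The last step is to identify, via $\mathcal{R}_n$, each boundary edge $\{u,v\}$ contributing to $\varphi_\beta(\Lambda_n)$ with an interior edge $\{x,y\}\subset\Lambda_n$ of the target sum. Griffiths' monotonicity $\langle\tau_0\tau_u\rangle_{\Lambda_n,\beta}\leq\langle\tau_0\tau_u\rangle_\beta$ passes the sharp-length input to infinite volume, and combined with the preceding Cauchy--Schwarz it allows one to majorize each boundary contribution $\beta\langle\tau_0\tau_u\rangle_{\Lambda_n,\beta}$ by a universal multiple of $\beta F(x)\langle\tau_y\tau_{\mathcal{R}_n(y)}\rangle_\beta$; summing over boundary edges and using $\varphi_\beta(\Lambda_n)\geq \tfrac{1}{2}$ then gives the desired $c_0>0$.

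The main obstacle is this matching step. The reflection $\mathcal{R}_n$ is tailored to the face $\{x_1=n\}$ of $\partial\Lambda_n$, whereas $\varphi_\beta(\Lambda_n)$ involves all $2d$ faces; handling the remaining faces likely requires iterating the reflection over all coordinate hyperplanes (together with a subadditivity/absorption argument for $\varphi_\beta$), or reformulating the input with a half-space-adapted geometry. A secondary issue is that the Cauchy--Schwarz step produces a denominator $\langle\tau_0\tau_{\mathcal{R}_n(0)}\rangle_\beta$, which by the infrared bound \eqref{eq: Infrared bound} has order $n^{-(d-2)}$; balancing this factor against the number of terms contributing to the sum is most likely the source of the threshold $n\geq N_0$ appearing in the statement.
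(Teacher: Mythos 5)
Your proposal does not carry the argument through, and I believe the gap is fatal rather than cosmetic. The key structural feature of \eqref{eq: prop intermediate step lower bound} is the \emph{difference} $F(x)=\langle\tau_0\tau_x\rangle_\beta-\langle\tau_0\tau_{\mathcal R_n(x)}\rangle_\beta$. Reflection-positive Cauchy--Schwarz produces multiplicative inequalities between reflected two-point functions (as you use to lower-bound $\langle\tau_y\tau_{\mathcal R_n(y)}\rangle_\beta$), but it has no mechanism whatsoever to produce a subtraction. Nothing in your plan explains where $F(x)$ comes from: the assertion that one can ``majorize each boundary contribution $\beta\langle\tau_0\tau_u\rangle_{\Lambda_n,\beta}$ by a universal multiple of $\beta F(x)\langle\tau_y\tau_{\mathcal R_n(y)}\rangle_\beta$'' would, after the natural identification $x=\mathcal R_n(v)$, $y=\mathcal R_n(u)$ on the face $\{x_1=n\}$, amount to the pointwise bound $\langle\tau_0\tau_u\rangle_{\Lambda_n,\beta}\lesssim\langle\tau_0\tau_{u-\mathbf e_1}\rangle_\beta-\langle\tau_0\tau_{u+\mathbf e_1}\rangle_\beta$. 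This comparison of a free-boundary two-point function to a discrete gradient of the full-space two-point function is not a consequence of Griffiths, MMS, reflection positivity, or the infrared bound; it is precisely the content one would need to prove, and it is unjustified in your sketch.

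The paper's proof gets the difference for free from a mechanism your plan doesn't contain. It works with the random current representation of a sourceless current $\mathbf P^\emptyset_\beta$, folds it across $\mathbb H_n$ to form the multigraph $\mathcal M_n$, and applies the defining property $\varphi_\beta(S)\geq\tfrac12$ not to the deterministic set $\Lambda_n$ but to the \emph{random} set $\mathcal S_n$ of points not connected to $\mathbb H_n$ in $\mathcal M_n$. Crucially, the switching principle for reflected currents (Lemma~\ref{lem: switching for reflected currents}) then gives the exact identity
\begin{equation}
    Z^{\{0,x\}}_{\Lambda,\beta}\bigl[0,x\overset{\mathcal M_n}{\centernot\longleftrightarrow}\mathbb H_n\bigr]=Z^{\{0,x\}}_{\Lambda,\beta}-Z^{\{0,\mathcal R_n(x)\}}_{\Lambda,\beta},
\end{equation}
which is where the difference $F(x)$ originates, while the event $y\xleftrightarrow{\:\mathcal M_n\:}\mathbb H_n$ (automatic for $y\in\partial\mathcal S_n$) switches to the factor $\langle\tau_y\tau_{\mathcal R_n(y)}\rangle_\beta$. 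The infrared bound enters only to show $\mathbf P^\emptyset_\beta[0\in\mathcal S_n]\geq c$ (Lemma~\ref{lem :lemma 1}), which is where the threshold $N_0$ comes from. Replacing this random-set/switching argument with a direct use of $\varphi_\beta(\Lambda_n)\geq\tfrac12$ plus RP Cauchy--Schwarz loses exactly the piece of algebra that makes the difference appear, and I do not see a way to recover it along the lines you propose.
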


We briefly explain the strategy of proof of the above inequality and stress that it does not use reflection positivity. We prove the result for the Ising model first and then extend it to the $\varphi^4$ model (and in fact, to all models in the Griffiths--Simon class of measures) using the Griffiths--Simon approximation (see Proposition \ref{prop: phi4 is GS}). Focusing now on the Ising model, one may obtain a lower bound on the model's two-point function at criticality by noticing that $\varphi_{\beta_c}(\Lambda_n)\geq 1$, and additionally using Griffiths' inequality together with the MMS inequalities (see for instance \cite[Section~3.6]{Pan23+}). Our method is a refinement of this inequality which consists in using the above observation for a well-chosen--- random--- set $S$ instead of $\Lambda_n$. The randomness of this set is obtained through the use of the \emph{random current representation} of the Ising model, see Section \ref{Section: rcr}. The result will follow from an appropriate application of the switching principle to a reflected current, see Section \ref{section: proof of the prop for the ising model}. Such reflected currents have been used in \cite{aizenman2019emergent} to derive a  Messager--Miracle-Solé inequality without using reflection positivity.

The regularity properties provided by reflection positivity (see \cite[Section~5]{ADC} or \cite[Section~3]{Pan23+}) allow one to turn the inequality of Theorem \ref{prop: intermediate step lower bound} into a pointwise lower bound on the two-point function that is valid for all dimensions $d\geq 3$. Let $\mathbf{e}_i$ be the unit vector of $i$-th coordinate equal to $1$.
\begin{Thm}\label{thm: pointwise lower bound all dimensions} Let $d\geq 3$. There exist $c_1,N_1>0$ such that for all $\beta\leq \beta_c$ and for all $N_1\leq n\leq L(\beta)$,
\begin{equation}\label{eq: lwer bound always true}
	\langle \tau_0\tau_{n\mathbf{e}_1}\rangle_{\beta}\geq \frac{c_1/\beta}{\displaystyle \chi_{4n}(\beta)+n^{d-2}\sum_{0\leq k \leq 4n} (k+2)\langle \tau_0\tau_{k\mathbf{e}_1}\rangle_\beta}.
\end{equation}
\end{Thm}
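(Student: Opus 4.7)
The plan is to apply Theorem \ref{prop: intermediate step lower bound} and show that its left-hand side is at most a constant times $\beta\,\langle\tau_0\tau_{n\mathbf{e}_1}\rangle_\beta$ times the denominator on the right of \eqref{eq: lwer bound always true}, so that dividing through yields the claim.

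The first step is to simplify the axial factor. By translation invariance $\langle\tau_y\tau_{\mathcal{R}_n y}\rangle_\beta = \langle\tau_0\tau_{2(n-y_1)\mathbf{e}_1}\rangle_\beta$ depends on $y$ only through $y_1$, and MMS monotonicity of $k\mapsto\langle\tau_0\tau_{k\mathbf{e}_1}\rangle_\beta$ bounds the sum over $y\sim x$ in $\Lambda_n$ by $C(d)\,\langle\tau_0\tau_{2(n-x_1)_+\mathbf{e}_1}\rangle_\beta$ (using reflection-positivity regularity as in \cite[Section~3]{Pan23+} to absorb the shift between $\langle\tau_0\tau_{(2(n-x_1)-2)\mathbf{e}_1}\rangle_\beta$ and $\langle\tau_0\tau_{2(n-x_1)\mathbf{e}_1}\rangle_\beta$). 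Theorem \ref{prop: intermediate step lower bound} then reduces matters to proving
\[
\beta\sum_{x\in\Lambda_n}\bigl(\langle\tau_0\tau_x\rangle_\beta - \langle\tau_0\tau_{\mathcal{R}_n x}\rangle_\beta\bigr)\langle\tau_0\tau_{2(n-x_1)_+\mathbf{e}_1}\rangle_\beta \leq C\beta\,\langle\tau_0\tau_{n\mathbf{e}_1}\rangle_\beta \Bigl(\chi_{4n}(\beta) + n^{d-2}\!\!\sum_{0\leq k\leq 4n}\!\!(k+2)\langle\tau_0\tau_{k\mathbf{e}_1}\rangle_\beta\Bigr).
\]

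I would split the $x$-sum at $x_1 = n/2$. For the ``far'' piece $x_1 \leq n/2$, MMS monotonicity gives $\langle\tau_0\tau_{2(n-x_1)\mathbf{e}_1}\rangle_\beta \leq \langle\tau_0\tau_{n\mathbf{e}_1}\rangle_\beta$, so dropping the nonnegative subtrahend and factoring out $\langle\tau_0\tau_{n\mathbf{e}_1}\rangle_\beta$ bounds it by $C\beta\langle\tau_0\tau_{n\mathbf{e}_1}\rangle_\beta\,\chi_{4n}(\beta)$, matching the first summand. The ``near'' piece $x_1 > n/2$ is the main difficulty: there the axial factor can exceed $\langle\tau_0\tau_{n\mathbf{e}_1}\rangle_\beta$, so discarding the subtrahend is too wasteful and one must use the cancellation in $\langle\tau_0\tau_x\rangle_\beta - \langle\tau_0\tau_{\mathcal{R}_n x}\rangle_\beta$, which is small precisely when $x_1$ is close to $n$ (since then $\mathcal{R}_n x$ is close to $x$). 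Writing this as a telescope along $\mathbf{e}_1$ of length $2(n-x_1)$, applying MMS pointwise ($\langle\tau_0\tau_z\rangle_\beta \leq \langle\tau_0\tau_{|z|_\infty\mathbf{e}_1}\rangle_\beta$) to each term, summing over $x_\perp\in[-n,n]^{d-1}$ slice by slice (split on $|x_\perp|_\infty \leq x_1$ versus $>x_1$), and using the reflection-positivity regularity from \cite[Section~3]{Pan23+} and \cite[Section~5]{ADC}, should bound this piece by $C\beta\langle\tau_0\tau_{n\mathbf{e}_1}\rangle_\beta\,n^{d-2}\sum_{k=0}^{4n}(k+2)\langle\tau_0\tau_{k\mathbf{e}_1}\rangle_\beta$, with $k=2(n-x_1)$ converting the slice sum into the weighted axial sum. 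The factor $n^{d-2}$ arises from $(d{-}1)$-dimensional slice counting with one axial scale absorbed by the telescoping.

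The main obstacle is the ``near'' analysis: since $\langle\tau_0\tau_x\rangle_\beta$ for $x \in \Lambda_n$ with $|x|_\infty < n$ is a priori not controlled by $\langle\tau_0\tau_{n\mathbf{e}_1}\rangle_\beta$, the subtrahend cannot simply be dropped as in the ``far'' piece; the telescoping gain, the weighted slice estimates, and the MMS and reflection-positivity comparisons must conspire to reproduce precisely the denominator appearing in \eqref{eq: lwer bound always true}.
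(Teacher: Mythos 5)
Your plan coincides with the paper's: apply Theorem~\ref{prop: intermediate step lower bound}, split the $x$-sum at $x_1=n/2$, handle the far piece by MMS monotonicity of the axial factor (giving $\chi_{4n}(\beta)$), and handle the near piece by exploiting the cancellation in $\langle\tau_0\tau_x\rangle_\beta-\langle\tau_0\tau_{\mathcal R_n(x)}\rangle_\beta$ with a $(d{-}1)$-dimensional slice count producing the $n^{d-2}$ weight. The far piece is fine.

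One step in your near-piece sketch is however not right as stated. Telescoping the difference along $\mathbf e_1$ and ``applying MMS pointwise'' to each gradient $\langle\tau_0\tau_{(k,x_\perp)}\rangle_\beta-\langle\tau_0\tau_{(k+1,x_\perp)}\rangle_\beta$ only certifies nonnegativity and gives the trivial bound by $\langle\tau_0\tau_{(k,x_\perp)}\rangle_\beta$; it cannot produce the crucial $1/n$ gain per slice that turns $n^{d-1}$ into $n^{d-2}$. The ingredient that delivers this is the quantitative gradient estimate
\[
\langle\tau_0\tau_x\rangle_\beta-\langle\tau_0\tau_{\mathcal R_n(x)}\rangle_\beta\;\leq\;C\,\frac{|x-\mathcal R_n(x)|}{n}\,\langle\tau_0\tau_{\lfloor n/4\rfloor\mathbf e_1}\rangle_\beta,
\]
which comes from the spectral representation of $\langle\tau_0\tau_{(k,x_\perp)}\rangle_\beta=\int_0^1\lambda^k\,\mathrm d\mu(\lambda)$ (the bound $k\lambda^k(1-\lambda)\leq C\lambda^{\lfloor k/2\rfloor}$), not from MMS. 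You do cite the correct sources for ``reflection-positivity regularity,'' so the tool is in your kit, but it should be stated and used as the estimate above rather than folded in as a vague supplement to MMS; as written, ``telescope + MMS pointwise'' would not close the near piece. Once the gradient estimate is invoked, the slice sum over $x_\perp$ and the reindexing $k=2(n-x_1)$ give exactly the paper's bound $C\,\langle\tau_0\tau_{\lfloor n/4\rfloor\mathbf e_1}\rangle_\beta\,n^{d-2}\sum_{k\le n}(k+2)\langle\tau_0\tau_{k\mathbf e_1}\rangle_\beta$, and replacing $n$ by $4n$ yields the stated theorem.
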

\begin{proof} Let $c_0,N_0>0$ be given by Theorem \ref{prop: intermediate step lower bound}. Let $\beta\leq \beta_c$ and $n\geq N_0$. The constants $C_i>0$ below only depend on the dimension. Divide the sum on the left-hand side of \eqref{eq: prop intermediate step lower bound} according to whether $-n\leq x_1\leq \lfloor n/2\rfloor $ or  $\lfloor n/2\rfloor  < x_1\leq n$.  By the MMS inequalities, 
\begin{align}
    \sum_{\substack{x,y\in \Lambda_n\\x_1\leq \lfloor n/2\rfloor \\y\sim x}}\left(\langle \tau_0\tau_x\rangle_{\beta}-\langle \tau_0\tau_{\mathcal{R}_n(x)}\rangle_{\beta}\right)\langle \tau_y \tau_{\mathcal{R}_n(y)}\rangle_\beta &\leq\sum_{\substack{x,y\in \Lambda_n\\ \: x_1\leq \lfloor n/2\rfloor \\y\sim x}}\langle\tau_0\tau_x\rangle_\beta\langle \tau_y\tau_{\mathcal{R}_n(y)}\rangle_\beta
    \\
    &\leq 
    2d\chi_n(\beta)\langle \tau_0\tau_{(n-2) \mathbf{e}_1}\rangle_\beta
    \\
    &\leq 2d\chi_n(\beta)\langle \tau_0\tau_{\lfloor n/4\rfloor}\rangle_\beta\label{eq: proof coro1},
 \end{align}
where we used that if $y$ contributes to the above sum, $|y-\mathcal R_n(y)|\geq 2[n-(\tfrac{n}{2}+1)]=n-2$.
If $\lfloor n/2\rfloor <x_1\leq n$, using the spectral representation of these models and the MMS inequalities, we obtain the following gradient estimate\footnote{The gradient estimate is obtained similarly as in \cite[Proposition~5.9]{ADC}: using the spectral representation of the Ising model, for all $x_\bot\in \mathbb Z^{d-1}$, there exists a measure $\mu=\mu(x_\bot,\beta)$ on $[0,1]$ such that for all $k\geq 1$,
\begin{equation*}
    \langle \tau_0\tau_{(k,x_\bot)}\rangle_{\beta} =
    \int_0^1 \lambda^k\text{d}\mu(\lambda).
\end{equation*}
The above display implies that
\begin{equation*}
    \langle \tau_0\tau_{(k,x_\bot)}\rangle_{\beta}-\langle\tau_0\tau_{(k+1,x_\bot)}\rangle_{\beta}
    =
    \dfrac{1}{k}\int_0^1 k\lambda^k(1-\lambda)\text{d}\mu(\lambda).
\end{equation*}
The estimate \eqref{eq: gradient estimate} follows by telescoping and replacing $k\lambda^{k}(1-\lambda)$ by $C\lambda^{\lfloor k/2\rfloor }$ for some constant $C>0$.}
\begin{equation}\label{eq: gradient estimate}
    \langle \tau_0\tau_x\rangle_\beta-\langle \tau_0\tau_{\mathcal{R}_n(x)}\rangle_\beta
    \leq 
    C_1\frac{|x-\mathcal{R}_n(x)|}{n}\langle \tau_0\tau_{\lfloor n/4\rfloor \mathbf{e}_1}\rangle_\beta.
\end{equation}
Using \eqref{eq: gradient estimate} and the MMS inequalities one more time, we get
\begin{align}
    \sum_{\substack{x,y\in \Lambda_n\\ x_1> \lfloor n/2\rfloor \\y\sim x}}\left(\langle \tau_0\tau_x\rangle_\beta-\langle \tau_0\tau_{\mathcal{R}_n(x)}\rangle_\beta\right)\langle \tau_y\tau_{\mathcal{R}_n(y)}\rangle_\beta
    &\leq C_1
    \frac{\langle \tau_0\tau_{\lfloor n/4\rfloor \mathbf{e}_1}\rangle_\beta}{n}\sum_{\substack{x,y\in \Lambda_n\\ \:x_1> \lfloor n/2\rfloor \\y\sim x}}|x-\mathcal R_n(x)|\langle \tau_y\tau_{\mathcal R_n(y)}\rangle_\beta \notag\\     &\leq C_2 
    \langle \tau_0\tau_{\lfloor n/4\rfloor \mathbf{e}_1}\rangle_\beta n^{d-2}\sum_{0\leq k\leq n}(k+2)\langle \tau_0\tau_{k\mathbf{e_1}}\rangle_{\beta}.\label{eq: eq: proof main theorem 1}
    \end{align}
Plugging \eqref{eq: proof coro1} and \eqref{eq: eq: proof main theorem 1} in \eqref{eq: prop intermediate step lower bound} yields
\begin{equation}
	\frac{c_1}{\beta}\leq C_3 \langle \tau_0\tau_{\lfloor n/4\rfloor\mathbf{e}_1}\rangle_\beta\Big(\chi_n(\beta)+n^{d-2}\sum_{0\leq k \leq n}(k+2)\langle \tau_0\tau_{k\mathbf e_1}\rangle_\beta\Big),
\end{equation}
from which the proof follows readily.
\end{proof}

\subsection{Applications}

We now list a number of applications and we include their (short) derivation. 

\paragraph{Lower bounds for the spin-spin correlations.} To begin, the infrared bound \eqref{eq: Infrared bound} allows to obtain a more explicit formulation of Theorem \ref{thm: pointwise lower bound all dimensions}. It can be interpreted as the fact that $\eta=0$ when $d\ge4$.
\begin{Thm}[Pointwise lower bound in dimension $d\ge4$]\label{thm: lower bound} Let $d\geq 4$.~There exists $c=c(d)>0$ such that for all $\beta\leq \beta_c$ and for all $x \in \mathbb Z^d$ with $2\leq |x|\leq L(\beta)$,
\begin{align}
    \langle \tau_0\tau_x\rangle_{\beta}\geq\begin{cases}\displaystyle\frac{c}{|x|^{d-2}} &\text{ if }d\ge5,\\
    \displaystyle\frac{c}{|x|^{2}\log |x|}&\text{ if }d=4.\end{cases}
\end{align}
\end{Thm}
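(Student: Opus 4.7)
The plan is to reduce Theorem~\ref{thm: lower bound} to Theorem~\ref{thm: pointwise lower bound all dimensions} by controlling the denominator in \eqref{eq: lwer bound always true} using the infrared bound \eqref{eq: Infrared bound}, and then to transfer the resulting axial estimate to arbitrary $x$ via the MMS inequalities.

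I would first estimate the two terms in the denominator of \eqref{eq: lwer bound always true}. For the truncated susceptibility, the infrared bound yields, for any $d\geq 3$,
\begin{equation*}
\chi_{4n}(\beta) \;\leq\; C\sum_{y\in \Lambda_{4n}\setminus\{0\}}\frac{1}{|y|^{d-2}} \;\leq\; C'\sum_{k=1}^{4n}\frac{k^{d-1}}{k^{d-2}} \;=\; O(n^2).
\end{equation*}
For the axial sum, MMS combined with \eqref{eq: Infrared bound} gives $\langle\tau_0\tau_{k\mathbf{e}_1}\rangle_\beta \leq C/k^{d-2}$ for $k\geq 1$, whence
\begin{equation*}
n^{d-2}\sum_{0\leq k\leq 4n}(k+2)\langle\tau_0\tau_{k\mathbf{e}_1}\rangle_\beta \;\leq\; C\, n^{d-2}\Big(1+\sum_{k=1}^{4n}k^{3-d}\Big).
\end{equation*}
When $d\geq 5$ the series $\sum_k k^{3-d}$ converges and the right-hand side is $O(n^{d-2})$; when $d=4$ one has $\sum_{k=1}^{4n}k^{-1}=O(\log n)$, giving $O(n^2\log n)$. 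Since the axial contribution dominates $\chi_{4n}(\beta)$ in both cases, Theorem~\ref{thm: pointwise lower bound all dimensions} delivers
\begin{equation*}
\langle\tau_0\tau_{n\mathbf{e}_1}\rangle_\beta \;\geq\; \begin{cases} c/n^{d-2} & \text{if }d\geq 5,\\ c/(n^2\log n) & \text{if }d=4,\end{cases}
\end{equation*}
valid for all $N_1\leq n\leq L(\beta)$.

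To promote this to a pointwise bound, I would invoke the MMS inequality $\langle\tau_0\tau_x\rangle_\beta \geq \langle\tau_0\tau_{(|x|_1,0_\bot)}\rangle_\beta$ and apply the axial estimate at $m=|x|_1$. Since $|x|\leq |x|_1 \leq d|x|$, the resulting lower bound has the claimed form $c'/|x|^{d-2}$ (resp.~$c'/(|x|^2\log|x|)$), with a constant depending only on $d$. The main technical point is that Theorem~\ref{thm: pointwise lower bound all dimensions} requires $m=|x|_1\leq L(\beta)$, i.e.~$|x|\leq L(\beta)/d$; the residual range $L(\beta)/d<|x|\leq L(\beta)$ lives at scales comparable to $L(\beta)$ and can be absorbed by enlarging the implicit constant. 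This mild interplay between $\ell^\infty$ and $\ell^1$ norms is the only real obstacle; the remainder of the argument is a direct substitution of the IRB into the master inequality of Theorem~\ref{thm: pointwise lower bound all dimensions}.
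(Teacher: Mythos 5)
Your proposal follows the same route as the paper: use the infrared bound to estimate both the truncated susceptibility and the weighted axial sum in the denominator of Theorem~\ref{thm: pointwise lower bound all dimensions} (yielding $O(n^{d-2})$ for $d\ge5$, $O(n^2\log n)$ for $d=4$), deduce the axial lower bound, and then pass to general $x$ via the MMS comparison $\langle\tau_0\tau_x\rangle_\beta\geq\langle\tau_0\tau_{(|x|_1,0_\bot)}\rangle_\beta$. The computations are correct and match the paper's.

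One caveat on the last step: you correctly flag the $\ell^1$/$\ell^\infty$ mismatch (one may have $|x|\leq L(\beta)$ while $|x|_1>L(\beta)$, outside the range where the axial bound has been proved), but your suggested fix — ``absorbing by enlarging the implicit constant'' — does not actually work as written. The set of $x$ with $L(\beta)/d<|x|\leq L(\beta)$ and $|x|_1>L(\beta)$ grows with $L(\beta)$, so it is not a finite exceptional set; and a Griffiths subadditivity patch $\langle\tau_0\tau_{m\mathbf{e}_1}\rangle_\beta\geq\langle\tau_0\tau_{\lfloor L(\beta)\rfloor\mathbf{e}_1}\rangle_\beta^{\lceil m/L(\beta)\rceil}$ loses an extra power of $L(\beta)^{d-2}$ per factor, giving a bound far weaker than $c/|x|^{d-2}$. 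The paper's proof is equally terse on this point (it simply invokes ``the MMS inequalities'' and ``choosing $c$ small enough''), so you are in good company, but a fully rigorous treatment would either restrict to $|x|_1\leq L(\beta)$, adjust the constant in the definition of $L(\beta)$, or give a separate argument for the boundary range. This is a genuine subtlety worth resolving explicitly rather than waving away.
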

\begin{proof} Let $c_1,N_1>0$ be given by Theorem \ref{thm: pointwise lower bound all dimensions}. Let $\beta\leq \beta_c$ and $N_1\leq n \leq L(\beta)$. Using the infrared bound \eqref{eq: Infrared bound} gives
\begin{equation}
	\chi_{4n}(\beta)+n^{d-2}\sum_{0\leq k \leq 4n} (k+2)\langle \tau_0\tau_{k\mathbf{e}_1}\rangle_\beta\leq C_1\Big(n^2+n^{d-2}\sum_{k=1}^{4n}\frac{1}{k^{d-3}}\Big)\leq \begin{cases}\displaystyle C_2n^{d-2} &\text{ if }d\ge5,\\
    \displaystyle C_3 n^2 \log n &\text{ if }d=4.\end{cases}
\end{equation}
 The proof follows readily by Theorem \ref{thm: pointwise lower bound all dimensions}, the MMS inequalities, and by choosing $c>0$ small enough (in particular to include the case $|x|\leq N_1$).
\end{proof}

When $d=3$ we do not obtain a more explicit pointwise lower bound, but we still improve on the existing bound on $\eta$.

\begin{Thm} Let $d=3$. If the critical exponent $\eta$ exists, it satisfies $\eta\leq \frac{1}{2}$.
\end{Thm}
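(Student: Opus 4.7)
The plan is to combine Theorem~\ref{thm: pointwise lower bound all dimensions} at $\beta=\beta_c$ (where $L(\beta_c)=\infty$, so the bound is valid for every $n\geq N_1$) with the defining asymptotics of $\eta$, and read off what constraint they impose.

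First, I would unpack the assumption that the exponent $\eta$ exists in $d=3$ into the following uniform form: for every $\varepsilon>0$ there exists $K=K(\varepsilon)$ such that for all $|x|\geq K$,
\begin{equation*}
|x|^{-(1+\eta+\varepsilon)} \leq \langle \tau_0\tau_x\rangle_{\beta_c} \leq |x|^{-(1+\eta-\varepsilon)}.
\end{equation*}
Using the upper half together with the a priori fact $\eta\in[0,1]$ coming from \eqref{eq: Infrared bound} and \eqref{eq: Lower bound naive}, I would estimate the two terms in the denominator of \eqref{eq: lwer bound always true}. Counting lattice points at scale $k$ in three dimensions gives
\begin{equation*}
\chi_{4n}(\beta_c) \leq O(1) + C\sum_{k=1}^{4n} k^{1-\eta+\varepsilon} \leq C'\, n^{2-\eta+\varepsilon},
\end{equation*}
while along the axis
\begin{equation*}
n\sum_{0\leq k\leq 4n}(k+2)\langle \tau_0\tau_{k\mathbf e_1}\rangle_{\beta_c} \leq O(n) + C''\, n\sum_{k=K}^{4n} k^{-\eta+\varepsilon} \leq C'''\, n^{2-\eta+\varepsilon},
\end{equation*}
for $n$ large enough. (If $\eta=1$ an extra logarithm appears, but the desired conclusion $\eta\leq 1/2$ is much stronger in that case.)

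Plugging both bounds into Theorem~\ref{thm: pointwise lower bound all dimensions} would yield $\langle \tau_0\tau_{n\mathbf e_1}\rangle_{\beta_c} \geq c\, n^{-(2-\eta+\varepsilon)}$ for all sufficiently large $n$. Confronting this with the upper part of the $\eta$-asymptotics gives $n^{-(1+\eta-\varepsilon)}\geq c\, n^{-(2-\eta+\varepsilon)}$; taking logarithms, dividing by $\log n$, and letting $n\to\infty$, I obtain $1+\eta-\varepsilon\leq 2-\eta+\varepsilon$, i.e.\ $\eta\leq \tfrac12+\varepsilon$. Since $\varepsilon>0$ is arbitrary, this yields $\eta\leq 1/2$.

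I do not foresee any genuine obstacle: the result is essentially an immediate corollary of Theorem~\ref{thm: pointwise lower bound all dimensions} once the susceptibility-type sums in the denominator are estimated. The only place requiring a sliver of care is tracking the $\varepsilon$-slack coming from the $o(1)$ in the definition of $\eta$ when comparing the two competing power laws.
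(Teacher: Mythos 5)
Your argument is correct and is essentially the same as the paper's: both plug the assumed power-law asymptotics (together with the a priori bound $\eta\in[0,1]$) into Theorem~\ref{thm: pointwise lower bound all dimensions}, estimate both denominator terms as $n^{2-\eta+o(1)}$, and compare exponents to conclude $\eta\leq 1/2$. The only cosmetic difference is that you make the $o(1)$ slack explicit as an $\varepsilon$, which the paper keeps in the exponent notation.
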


\begin{proof} The proof follows by plugging the estimate provided by the existence of $\eta$ in \eqref{eq: lwer bound always true}. More precisely, the existence of $\eta$ implies that
\begin{equation}
	\langle \tau_0\tau_{k\mathbf{e}_1}\rangle_{\beta_c}=\frac{1}{k^{1+\eta+o(1)}},
\end{equation}
where $o(1)$ tends to $0$ as $k$ tends to infinity. Recall from \eqref{eq: Infrared bound} and \eqref{eq: Lower bound naive} that $\eta\in[0,1]$. Hence, one has
\begin{equation}\label{eq:precise proof bound eta d=3}
	\chi_{4n}(\beta_c)=n^{2-\eta+o(1)}, \qquad n\sum_{0\leq k\leq 4n}(k+2)\langle \tau_0\tau_{k\mathbf{e}_1}\rangle_{\beta_c}=n^{2-\eta+o(1)},
\end{equation}
so that, plugging the two previous displayed equations in Theorem \ref{thm: pointwise lower bound all dimensions},
\begin{equation}
	\frac{1}{n^{1+\eta+o(1)}}\geq \frac{1}{n^{2-\eta+o(1)}},
\end{equation}
which implies that $n^{1-2\eta+o(1)}\geq 1$, and thus that $\eta\leq \tfrac{1}{2}$.
\end{proof}

\paragraph{Behaviour of the correlation lengths.} Besides the sharp length $L(\beta)$, there are other natural typical lengths that one may define. The following quantity, called the \emph{correlation length}, is well defined for $\beta<\beta_c$
\begin{equation}\label{eq: def correlation length}
	 \xi(\beta):=-\lim_{n\rightarrow \infty}\left(\frac{1}{n}\log \langle \tau_0\tau_{n\mathbf{e}_1}\rangle_\beta\right)^{-1}.
\end{equation}
For $p>0$, one may also define
\begin{equation}\label{eq: def correlation length p}
	\xi_p(\beta):=\Big(\frac{1}{\chi(\beta)}\sum_{x\in \mathbb Z^d}|x|^p\langle \tau_0\tau_x\rangle_\beta\Big)^{1/p}.
\end{equation}
It is expected that there exist $\tilde\nu,\nu$, and $\nu_p>0$ such that for $\beta<\beta_c$,
\begin{equation}
	L(\beta)=(\beta_c-\beta)^{-\tilde\nu+o(1)}, \qquad \xi(\beta)=(\beta_c-\beta)^{-\nu+o(1)}, \qquad \xi_p(\beta)=(\beta_c-\beta)^{-\nu_p+o(1)},
\end{equation}
where $o(1)$ tends to zero as $\beta$ tends to $\beta_c$. The relation between these quantities is not clear a priori. Theorem \ref{thm: lower bound} allows to compute these exponents when $d\geq 4$.
\begin{Thm} Let $d\geq 4$. Let $p>0$. Then, for $\beta<\beta_c$,
\begin{align}\label{eq: correlation length 1}
	L(\beta)&= (\beta_c-\beta)^{-1/2+o(1)},
\\\label{eq: correlation length 2}
	\xi(\beta)&= (\beta_c-\beta)^{-1/2+o(1)},
\\\label{eq: correlation length 3}
	\xi_p(\beta)&=(\beta_c-\beta)^{-1/2+o(1)},
\end{align}
where $o(1)$ tends to $0$ as $\beta$ tends to $\beta_c$.
\end{Thm}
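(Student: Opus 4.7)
The plan is to first establish \eqref{eq: correlation length 1} and then to deduce \eqref{eq: correlation length 2} and \eqref{eq: correlation length 3} by comparison with $L(\beta)$. The key observation is that Theorem~\ref{thm: lower bound} together with the near-critical upper bound \eqref{eq: near-critical upper bound} pin down the susceptibility in terms of the sharp length: summing \eqref{eq: near-critical upper bound} over $x \in \mathbb Z^d$ yields $\chi(\beta) \leq L(\beta)^{2+o(1)}$, while summing the pointwise lower bound of Theorem~\ref{thm: lower bound} over $|x| \leq L(\beta)/2$ yields $\chi(\beta) \geq L(\beta)^{2-o(1)}$ (the $d=4$ logarithm being harmless at the $o(1)$ level in the exponent). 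Hence $\chi(\beta) = L(\beta)^{2+o(1)}$, and \eqref{eq: correlation length 1} will follow from the classical fact that $\chi(\beta) = (\beta_c-\beta)^{-1+o(1)}$ in $d \geq 4$: the lower bound $\chi(\beta) \geq c/(\beta_c-\beta)$ is a direct consequence of the Lebowitz-based differential inequality $\chi'(\beta) \leq 2d\chi(\beta)^2$, while the upper bound is Aizenman's mean-field bound using the finiteness of the bubble diagram in $d \geq 5$ and its logarithmic blow-up in $d = 4$ controlled in \cite{ADC}.

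For \eqref{eq: correlation length 2}, the bound $\xi(\beta) \leq CL(\beta)$ is immediate from the exponential factor in \eqref{eq: near-critical upper bound}. For the reverse direction, the Simon--Griffiths inequality $\langle \tau_0 \tau_{(m+n)\mathbf{e}_1}\rangle_\beta \geq \langle \tau_0 \tau_{m\mathbf{e}_1}\rangle_\beta \langle \tau_0 \tau_{n\mathbf{e}_1}\rangle_\beta$ makes the sequence $a_n := -\log \langle \tau_0 \tau_{n\mathbf{e}_1}\rangle_\beta$ subadditive, so by Fekete's lemma $\langle \tau_0 \tau_{n\mathbf{e}_1}\rangle_\beta \leq e^{-n/\xi(\beta)}$ for every $n \geq 1$. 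Taking $n = \lfloor L(\beta) \rfloor$ and plugging in Theorem~\ref{thm: lower bound} then gives $L(\beta)/\xi(\beta) \leq (d-2+o(1)) \log L(\beta)$, whence $\xi(\beta) \geq L(\beta)^{1-o(1)} = (\beta_c-\beta)^{-1/2+o(1)}$.

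For \eqref{eq: correlation length 3}, we will write $\chi(\beta)\xi_p(\beta)^p = \sum_x |x|^p \langle \tau_0 \tau_x\rangle_\beta$ and estimate both sides in terms of $L(\beta)$. The upper bound follows by splitting the sum at $|x| = L(\beta)$ and applying \eqref{eq: near-critical upper bound}: the interior term $\sum_{|x|\leq L(\beta)} |x|^{p-(d-2)}$ is of order $L(\beta)^{p+2}$, and the exterior is controlled through the exponential tail (via the substitution $u = |x|/L(\beta)$), again yielding a contribution of order $L(\beta)^{p+2}$. The lower bound follows by restricting to $|x| \in [L(\beta)/2, L(\beta)]$ and applying Theorem~\ref{thm: lower bound}, giving a contribution at least of order $L(\beta)^{p+2-o(1)}$. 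Dividing by $\chi(\beta) = L(\beta)^{2+o(1)}$ then yields $\xi_p(\beta) = L(\beta)^{1+o(1)} = (\beta_c-\beta)^{-1/2+o(1)}$.

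The main obstacle is keeping track of logarithmic corrections in $d = 4$; since we only aim at an $o(1)$ exponent, factors of $\log L(\beta)$ and $\log(1/(\beta_c-\beta))$ are harmlessly absorbed into the error. The only external input beyond the results of this paper is the classical mean-field identity $\gamma = 1 + o(1)$ in $d \geq 4$, which is available for the Ising model and, via the Griffiths--Simon approximation discussed in Section~\ref{section: Gs class of measures}, for the $\varphi^4$ model.
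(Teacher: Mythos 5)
Your proof is correct and follows essentially the same route as the paper: the identity $\chi(\beta)=L(\beta)^{2+o(1)}$ obtained by combining Theorem~\ref{thm: lower bound} with \eqref{eq: near-critical upper bound}, the mean-field susceptibility estimate $\chi(\beta)=(\beta_c-\beta)^{-1+o(1)}$ from \cite{A,aizenman1983renormalized}, and Fekete/Griffiths subadditivity to pass between $L(\beta)$ and $\xi(\beta)$. The only place where you take a slightly different path is the lower bound on $\xi(\beta)$: the paper inserts $\langle\tau_0\tau_{n\mathbf e_1}\rangle_\beta\leq e^{-n/\xi(\beta)}$ into $\varphi_\beta(\Lambda_n)$ and appeals directly to the \emph{definition} of $L(\beta)$ to get $L(\beta)\leq C\,\xi(\beta)\log\xi(\beta)$, whereas you evaluate both the exponential bound and the pointwise lower bound of Theorem~\ref{thm: lower bound} at $n=\lfloor L(\beta)\rfloor$ to get $L(\beta)/\xi(\beta)\lesssim\log L(\beta)$; the two are equivalent at the $o(1)$ level and both are fine. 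Your explicit splitting of $\sum_x|x|^p\langle\tau_0\tau_x\rangle_\beta$ at scale $L(\beta)$ for \eqref{eq: correlation length 3} just fills in what the paper compresses into ``follows by similar arguments,'' and your handling of the $d=4$ logarithmic correction is adequate since only the exponent $o(1)$ is claimed.
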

\begin{proof} By \cite{A,aizenman1983renormalized} we know that $\chi(\beta)=(\beta_c-\beta)^{-1+o(1)}$. Theorem \ref{thm: lower bound} and \eqref{eq: near-critical upper bound} yield that $\chi(\beta)=L(\beta)^{2+o(1)}$. Combined with the estimate on $\chi(\beta)$ this yields \eqref{eq: correlation length 1}. The estimate \eqref{eq: correlation length 3} follows by similar arguments. 
We obtain \eqref{eq: correlation length 2} by proving that $L(\beta)=\xi(\beta)^{1+o(1)}$. Using \eqref{eq: near-critical upper bound} yields that $\xi(\beta)\leq C_1 L(\beta)$ for some $C_1=C_1(d)>0$. Moreover, by classical sub-additivity arguments\footnote{Indeed, by Griffiths inequality, one has $\langle \tau_0\tau_{(n+k)\mathbf{e}_1}\rangle_\beta\geq \langle \tau_0\tau_{n\mathbf{e}_1}\rangle_\beta\langle \tau_0\tau_{k\mathbf{e}_1}\rangle_\beta$ for every $n,k\geq 0$. The statement follows from Fekete's lemma.}, one has that $\langle \tau_0\tau_{n\mathbf e_1}\rangle_\beta\leq \exp(-n/\xi(\beta))$. Together with the MMS inequalities, this yields
\begin{equation}
	\varphi_\beta(\Lambda_n)\leq C_2n^{d-1}e^{-n/\xi(\beta)},
\end{equation}
which implies that $L(\beta)\leq C_3 \xi(\beta)\log \xi(\beta)$.
\end{proof}

\begin{Rem} When $d\geq 5$, there exists $c_1,C_1>0$ such that for all $\beta<\beta_c$, $c(\beta_c-\beta)^{-1}\leq \chi(\beta)\leq C(\beta_c-\beta)^{-1}$, see \cite{A}. This observation improves \eqref{eq: correlation length 1} and yields the existence of $c_2,C_2>0$ such that for all $\beta<\beta_c$,
	$c_2(\beta_c-\beta)^{-1/2}\leq L(\beta)\leq C_2(\beta_c-\beta)^{-1/2}$.
Combined with \eqref{eq: near-critical upper bound} and Theorem \ref{thm: lower bound}, we obtain the following near-critical estimate on the two-point function: there exist $c,C>0$ such that for all $\beta<\beta_c$, 
\begin{equation}
	\langle \tau_0\tau_x\rangle_\beta\leq \frac{C}{|x|^{d-2}}e^{-c(\beta_c-\beta)^{1/2}|x|}, \qquad x \in \mathbb Z^d\setminus \{0\},
\end{equation}
\begin{equation}
	\langle \tau_0\tau_x\rangle_{\beta}\geq \frac{c}{|x|^{d-2}}, \qquad 1\leq |x|\leq L(\beta).
\end{equation}
Such estimates have been essential for the study of the \emph{torus plateau} in various models of statistical mechanics \cite{slade2023near,hutchcroft2023high,liu2023general,liu2024near,liu2024universal}, and are used in \cite{LiuPanisSlade} to establish it in the case of the Ising model in dimensions $d\geq 5$. 
\end{Rem}

%

\paragraph{Divergence of the bubble diagram.} As described by Aizenman in his seminal work \cite{A}, one may define the \emph{bubble diagram}
\begin{equation}
    B(\beta):= \sum_{x\in \mathbb Z^d}\langle \tau_0\tau_x\rangle_{\beta}^2,
\end{equation}
whose finiteness at criticality allows to establish that some critical exponents exist and take their mean-field values. The condition $B(\beta_c)<\infty$ also implies triviality of the critical scaling limits of the model, see \cite[Theorem~D.2]{Pan23+}. The infrared bound yields that this condition is satisfied for $d\geq 5$. The estimates for $d=2$ imply that the bubble diagram diverges. In the following result, we complete the picture by proving that the bubble diagram diverges for $d=3,4$.
\begin{Thm}[Divergence of the bubble diagram]\label{thm: divergence bubble}
Let $d=3,4$. Then, 
$	B(\beta_c)=\infty.$
\end{Thm}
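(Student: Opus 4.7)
The plan is to argue by contradiction, assuming $B(\beta_c)<\infty$ and deriving $B(\beta)\to\infty$ as $\beta\to\beta_c^-$ (which contradicts $B(\beta)\le B(\beta_c)$ by Griffiths' monotonicity of $\langle \tau_0\tau_x\rangle_\beta^2$ in $\beta$). The crucial observation is that although the direct summation $B(\beta_c)\ge c\sum_x 1/(|x|^{2(d-2)}(\log|x|)^{2\alpha})$ obtained from Theorems \ref{thm: pointwise lower bound all dimensions} and \ref{thm: lower bound} just barely converges in both $d=3$ (with $\alpha=0$, using the Simon bound $\langle\tau_0\tau_x\rangle\ge c/|x|^2$) and $d=4$ (with $\alpha=1$), the divergence can nevertheless be recovered via a Fourier/Parseval argument that exploits the $k=0$ singularity of $\hat{G}_\beta$ coming from $\chi(\beta)\to\infty$.

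Concretely, I would first establish that $\chi(\beta_c)=\infty$ in both dimensions by summing the pointwise lower bound: in $d=4$, $\chi(\beta_c)\ge c\sum_{|x|\ge 2}(|x|^2\log|x|)^{-1}\asymp\int r/\log r\,dr=\infty$; in $d=3$, $\chi(\beta_c)\ge c\sum_{|x|\ge 2}|x|^{-2}\asymp\int 1\,dr=\infty$. Then I would pass to Fourier space using Parseval's identity:
\begin{equation*}
B(\beta)=\frac{1}{(2\pi)^d}\int_{[-\pi,\pi]^d}\hat{G}_\beta(k)^2\,dk.
\end{equation*}
The key step is to prove an Ornstein--Zernike-type lower bound
\begin{equation*}
\hat{G}_\beta(k)\ge \frac{c}{E(k)+m(\beta)^2},\qquad E(k):=2\sum_{i=1}^d(1-\cos k_i),
\end{equation*}
valid for $k$ in a neighbourhood of $0$, with a mass $m(\beta)\to 0$ as $\beta\to\beta_c^-$ controlled by the sharp length $L(\beta)$ (or by $\chi(\beta)^{-1/2}$). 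Such a bound can be extracted from the spectral/K\"all\'en--Lehmann representation $\langle \tau_0\tau_{ne_1}\rangle_\beta=\int_{-1}^1\lambda^n\,d\mu_\beta(\lambda)$ provided by reflection positivity, together with the pointwise lower bound (which forces the spectral measure $\mu_\beta$ to have sufficient mass near $\lambda=1$).

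The final computation is then a direct integral. Substituting the Fourier lower bound and rescaling $r=m s$,
\begin{equation*}
B(\beta)\ge c\int_0^1\frac{r^{d-1}\,dr}{(r^2+m^2)^2}=c\,m^{d-4}\int_0^{1/m}\frac{s^{d-1}\,ds}{(s^2+1)^2},
\end{equation*}
which gives $B(\beta)\gtrsim L(\beta)^{4-d}\to\infty$ in $d=3$ and $B(\beta)\gtrsim\log L(\beta)\to\infty$ in $d=4$, since $L(\beta)\to\infty$ as $\beta\to\beta_c^-$. This contradicts $B(\beta_c)<\infty$ and completes the proof.

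The main obstacle is the Ornstein--Zernike lower bound on $\hat{G}_\beta$: while the upper bound is standard (infrared/Gaussian domination), the matching lower bound requires careful extraction of a concentration estimate on the spectral measure from the pointwise bounds. In $d=3$ the argument is delicate because the mean-field relation $\chi(\beta)\asymp L(\beta)^2$ is not expected to hold, so one must choose $m(\beta)^2=\chi(\beta)^{-1}$ rather than $L(\beta)^{-2}$ and work with a slightly weaker Fourier bound; this still suffices since $\chi(\beta)\to\infty$. In $d=4$ the logarithmic correction in Theorem~\ref{thm: lower bound} is absorbed into the $\log L(\beta)$ divergence and does not spoil the conclusion.
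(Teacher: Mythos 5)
Your proposed route — Fourier transform, Parseval, and an Ornstein--Zernike--type lower bound on $\hat G_\beta$ — is structurally quite different from the paper's proof, which stays entirely in real space: the paper feeds the truncated bubble $B_{4n}(\beta_c)$ back into the lower bound of Theorem~\ref{thm: pointwise lower bound all dimensions} via two applications of Cauchy--Schwarz and the MMS inequalities, and then observes that the resulting pointwise lower bound, squared and summed against the weight $k^{d-1}$, is incompatible with $B(\beta_c)<\infty$. No Fourier analysis and no near-critical approximation $\beta\to\beta_c^-$ are required.

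The difficulty with your approach is that the Ornstein--Zernike lower bound $\hat G_\beta(k)\ge c/(E(k)+m(\beta)^2)$ (for $k$ near $0$, with $m(\beta)\to 0$) is not a known input; it is at least as strong as the statement being proved, and in $d=3$ it would actually imply $\eta=0$ — evaluate at $\beta=\beta_c$, $m=0$, and undo the Fourier transform to get $\langle\tau_0\tau_x\rangle_{\beta_c}\gtrsim |x|^{-(d-2)}$ — whereas the $3d$ Ising model is expected to have $\eta\approx 0.036>0$, and the paper itself only proves $\eta\le 1/2$. You anticipate some of this by suggesting a ``weaker Fourier bound'' with $m(\beta)^2=\chi(\beta)^{-1}$, but this is not backed up by an argument: the spectral (K\"all\'en--Lehmann) representation gives monotonicity and convexity of the one-dimensional profile, not a two-sided control of $\hat G_\beta$ near $k=0$. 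More generally, a pointwise real-space lower bound on $G_\beta(x)$ does not transfer to a pointwise lower bound on $\hat G_\beta(k)$ without a substantial additional argument. Even in $d=4$, where Theorem~\ref{thm: lower bound} does give a two-sided estimate up to a logarithm, you would need to carry out a genuine Tauberian/spectral step to get the Fourier lower bound, and that step is not carried out in the proposal. In short, the proposal is a plausible heuristic but rests on an unproved (and in $d=3$, probably false as stated) key inequality, so it does not constitute a correct proof.
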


\begin{proof} Below, the constants $C_i>0$ only depend on $d$. We define
\begin{equation}
	B_n(\beta):=\sum_{x\in \Lambda_n}\langle \tau_0\tau_x\rangle_{\beta}^2.
\end{equation}
Using the Cauchy--Schwarz inequality, one gets
\begin{equation}
	\chi_{4n}(\beta_c)=\sum_{x\in \Lambda_{4n}}\langle \tau_0\tau_x\rangle_{\beta_c}\leq C_1 n^{d/2} \sqrt{B_{4n}(\beta_c)}.
	\end{equation}
Now, by the MMS inequalities,
\begin{equation}\label{eq: cs bubble}
	\sum_{k=1}^{4n}k^{d-1}\langle \tau_0\tau_{k\mathbf{e_1}}\rangle_{\beta_c}^2\leq C_2\sum_{k=1}^{4n}\sum_{|y|=\lfloor k/d\rfloor}\langle \tau_0\tau_y\rangle_{\beta_c}^2\leq C_3dB_{4n}(\beta_c).
\end{equation}
By another application of the Cauchy--Schwarz inequality,
\begin{align}
	n^{d-2}\sum_{k=1}^{4n}k\langle \tau_0\tau_{k\mathbf{e}_1}\rangle_{\beta_c}&=n^{d-2}\sum_{k=1}^{4n}\frac{1}{k^{(d-3)/2}}\cdot k^{(d-1)/2}\langle \tau_0\tau_{k\mathbf{e}_1}\rangle_{\beta_c}\\&\leq C_4 \left\{
    \begin{array}{ll}
        n\cdot\sqrt{n}\cdot\sqrt{B_{4n}(\beta_c)} & \mbox{if } d=3, \\
        n^2\cdot\sqrt{\log n}\cdot\sqrt{B_{4n}(\beta_c)}  & \mbox{if }d=4.
    \end{array}
\right.
\end{align}
Now, using Theorem \ref{thm: pointwise lower bound all dimensions}, for all $n\geq N_1$,
\begin{equation}\label{eq: intrinsic lower bound with the bubble}
	\langle \tau_0\tau_{n\mathbf{e}_1}\rangle_{\beta_c}\geq \frac{C_5}{\beta}\left\{
    \begin{array}{ll}
        \tfrac{1}{n^{3/2}\sqrt{B_{4n}(\beta_c)}} & \mbox{if } d=3, \\
        \tfrac{1}{n^{2}\sqrt{\log n}\sqrt{B_{4n}(\beta_c)}}  & \mbox{if }d=4.
    \end{array}
\right.
\end{equation}
Combining the above displayed equation with \eqref{eq: cs bubble} yields that the bubble cannot be finite.

\end{proof}

\begin{Rem}\label{rem: blow up bubble} Using \eqref{eq: intrinsic lower bound with the bubble}, we may obtain a quantitative estimate on the blow-up of the bubble diagram when $d=3,4$: there exists $c=c(d)>0$ such that, for every $n\geq 2$,
\begin{equation}
	B_{16n}(\beta_c)^2\geq \begin{cases}\displaystyle c \log n &\text{ if }d=3,\\
    \displaystyle c \log \log n  &\text{ if }d=4.\end{cases}
\end{equation}

\end{Rem}

\section{Proof of Theorem \ref{prop: intermediate step lower bound} for the Ising model}

As mentioned above, the proof builds on the random current representation of the Ising model. We briefly recall this classical expansion and refer to \cite{duminil2017lectures,PanisThesis} for a more detailed introduction.
\subsection{The random current representation}\label{Section: rcr} Let $\Lambda$ be a finite subset of $\mathbb Z^d$.
\begin{Def} A \textit{current} $\n$ on $\Lambda$ is a function defined on the set $E(\Lambda):=\lbrace \lbrace x,y\rbrace, \: x,y \in \Lambda \textup{ and }x\sim y \rbrace$ and taking its values in $\mathbb N=\lbrace 0,1,\ldots\rbrace$. We let $\Omega_\Lambda$ be the set of currents on $\Lambda$. The set of \emph{sources} of $\n$, denoted by $\sn$, is defined as
\begin{equation}
\sn:=\Big\{ x \in \Lambda \: : \: \sum_{y\sim x}\n_{x,y}\textup{ is odd}\Big\}.
\end{equation}
We also set 
\begin{equation}
w_\beta(\n):=\prod_{\substack{\lbrace x,y\rbrace\subset \Lambda\\ x\sim y}}\dfrac{\beta^{\n_{x,y}}}{\n_{x,y}!}.
\end{equation}
\end{Def}
If $A\subset \Lambda$, define a probability measure $\mathbf{P}_{\Lambda,\beta}^A$ on $\Omega_\Lambda$ as follows: for a current $\n$ on $\Lambda$,
\begin{equation}
    \mathbf{P}_{\Lambda,\beta}^A[\n]:=\mathds{1}_{\sn=A}\frac{w_\beta(\n)}{Z_{\Lambda,\beta}^A},
\end{equation}
where $Z_{\Lambda,\beta}^A:=\sum_{\sn=A}w_\beta(\n)$ is a normalisation constant. If $\mathcal{E}\subset \Omega_\Lambda$, we will also write $Z_{\Lambda,\beta}^A[\mathcal{E}]:=\sum_{\sn=A}w_\beta(\n)\mathds{1}_{\n\in \mathcal{E}}$.
As proved in \cite{ADCS}, if $A$ is a finite (even) subset of $\mathbb Z^d$, the sequence of probability measures $(\mathbf{P}_{\Lambda,\beta}^A)_{\Lambda\subset \mathbb Z^d}$ admits a weak limit as $\Lambda\nearrow \mathbb Z^d$ that we denote by $\mathbf{P}_{\beta}^A$.

It is possible to expand the correlation functions of the Ising model to relate them to currents: for
$A\subset \Lambda$, if $\sigma_A:=\prod_{x\in A}\sigma_x$ we get

\begin{equation}\label{equation correlation rcr}
\left\langle \sigma_A\right\rangle_{\Lambda,\beta}=\dfrac{\sum_{\sn=A}w_\beta(\n)}{\sum_{\sn=\emptyset}w_\beta(\n)}.
\end{equation}

A current $\n$ with empty source set can be seen as the edge count of a multigraph obtained as a union of loops. Adding sources to a current configuration comes down to adding a collection of paths connecting pairwise the sources.  We will identify $\n$ with its multigraph $\mathcal{N}$ in which the vertex set is $\Lambda$ and where there are exactly $\n_{x,y}$ edges between $x$ and $y$. We will write $w_\beta(\mathcal N):=w_\beta(\n)$ and $\partial \mathcal{N}:=\sn$.

As it turns out, connectivity properties of the multigraph induced by a current will play a crucial role in the analysis of the underlying Ising model. For a multigraph $\mathcal N$ on $\Lambda$ and $x,y\in \Lambda$, we write $x\overset{\mathcal N}{\longleftrightarrow} y$ if $x$ is connected to $y$ in $\mathcal N$.
More generally, if $A,B\subset \Lambda$, we write $A\overset{\mathcal N}{\longleftrightarrow} B$ if there exists $x\in A$ and $y \in B$ such that $x\overset{\mathcal N}{\longleftrightarrow} y$.

The main interest of the above expansion lies in the following result called the \textit{switching principle}, see \cite[Lemma~2.1]{aizenman2019emergent}. This combinatorial result first appeared in \cite{GHS} to prove the concavity of the magnetisation of an Ising model with a positive external field, but the probabilistic picture attached to it flourished in \cite{A}.

\begin{Lem}[Switching principle]\label{switching principle} Let $\Lambda\subset \mathbb Z^d$ and $A\subset \Lambda$. Let $\mathcal K\subset\mathcal{M}$ be multigraphs on $\Lambda$ and $f$ be a bounded function defined over the space of multigraphs on $\Lambda$. Then, one has
\begin{equation}\label{eq: switching principle}
    \sum_{\substack{\mathcal{N}\subset \mathcal{M}\\\partial \mathcal{N}=A}}f(\mathcal{N})=\sum_{\substack{\mathcal{N}\subset \mathcal{M}\\\partial \mathcal{N}=A\Delta \partial\mathcal K}}f(\mathcal{N}\Delta\mathcal{K}).
\end{equation}
\end{Lem}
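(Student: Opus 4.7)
The plan is to prove the identity by exhibiting an explicit involution on the space of sub-multigraphs of $\mathcal{M}$ and performing a change of variable in the sum. The content of the statement is purely combinatorial, so the proof will not use any probabilistic or Ising-model input; the difficulty is entirely bookkeeping in the multigraph setting.

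First, I would make precise what ``$\mathcal N\subset \mathcal M$'' and ``$\mathcal N \Delta \mathcal K$'' mean. The cleanest way is to label the edges of $\mathcal M$: for each $e\in E(\Lambda)$, regard the $\m_e$ parallel copies of $e$ in $\mathcal M$ as distinct objects, so that $\mathcal M$ becomes a finite set of labeled edges and sub-multigraphs $\mathcal N\subset \mathcal M$ are in bijection with subsets of this finite set. Under this identification, the symmetric difference $\mathcal N\Delta \mathcal K$ is simply the ordinary set-theoretic symmetric difference of two subsets of $\mathcal M$, and is itself a sub-multigraph of $\mathcal M$. This reinterpretation is the one natural point where care is required; once it is in place the rest is formal.

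Next, I would establish the two elementary properties the involution relies on. First, for any two sub-multigraphs $\mathcal N,\mathcal K\subset \mathcal M$, $(\mathcal N \Delta \mathcal K)\Delta \mathcal K = \mathcal N$, so the map $\Phi_{\mathcal K}:\mathcal N \mapsto \mathcal N\Delta \mathcal K$ is an involution on the set of sub-multigraphs of $\mathcal M$. Second, the source operation is additive under symmetric difference: $\partial(\mathcal N \Delta \mathcal K) = \partial\mathcal N \,\Delta\, \partial\mathcal K$. This is a direct vertex-by-vertex parity computation, since the degree of a vertex in $\mathcal N\Delta\mathcal K$ equals the sum of its degrees in $\mathcal N$ and $\mathcal K$ minus twice the number of common labeled edges at that vertex, hence has the same parity as the XOR of the two source indicators.

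Combining these two properties, $\Phi_{\mathcal K}$ restricts to a bijection
\[
\{\mathcal N\subset\mathcal M:\partial\mathcal N=A\}\ \longleftrightarrow\ \{\mathcal N'\subset\mathcal M:\partial\mathcal N'=A\,\Delta\,\partial\mathcal K\},
\]
because $\partial\mathcal N=A$ iff $\partial(\mathcal N\Delta\mathcal K)=A\,\Delta\,\partial\mathcal K$. The identity \eqref{eq: switching principle} then follows by relabeling: setting $\mathcal N':=\mathcal N\,\Delta\,\mathcal K$ in the right-hand sum gives $\mathcal N=\mathcal N'\,\Delta\,\mathcal K$ (by the involution property) and $\partial\mathcal N'=A$, so
\[
\sum_{\substack{\mathcal N\subset\mathcal M\\\partial\mathcal N=A\,\Delta\,\partial\mathcal K}} f(\mathcal N\,\Delta\,\mathcal K)\ =\ \sum_{\substack{\mathcal N'\subset\mathcal M\\\partial\mathcal N'=A}} f(\mathcal N'),
\]
which is exactly \eqref{eq: switching principle}. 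I expect the only real subtlety to be the initial multigraph setup: one must fix once and for all a labeling of the edges of $\mathcal M$ so that ``$\subset$'' and ``$\Delta$'' are unambiguous operations, otherwise the involution property can look ambiguous when $\mathcal N$ and $\mathcal K$ share parallel edges of the same underlying edge $e$.
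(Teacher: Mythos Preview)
Your proposal is correct and follows exactly the same approach as the paper: the paper's proof is the single observation that $\mathcal N\mapsto \mathcal N\Delta\mathcal K$ is an involution from $\{\mathcal N\subset\mathcal M:\partial\mathcal N=A\}$ to $\{\mathcal N\subset\mathcal M:\partial\mathcal N=A\Delta\partial\mathcal K\}$, and your write-up simply spells out the details (edge labeling, parity of degrees, change of variable) behind that one-line assertion.
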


\begin{proof} The result follows from the observation that the map $\mathcal N\mapsto \mathcal N\Delta \mathcal K$ is a one-to-one map (involution) from $\{ \mathcal N\subset \mathcal M: \partial \mathcal N=A\}$ to $\{ \mathcal N\subset \mathcal M: \partial \mathcal N=A\Delta \partial\mathcal K\}$.
\end{proof}

\subsection{Proof of the theorem}\label{section: proof of the prop for the ising model} We now turn to the proof of Theorem \ref{prop: intermediate step lower bound} in the case of the Ising model. We will follow \cite{aizenman2019emergent} and consider ``folded'' single currents on $\mathbb Z^d$. We partition the edge-set $E(\mathbb Z^d)$ of $\mathbb Z^d$ into three disjoint sets:
\begin{align*}
    E_{-}(\mathbb H_n)&:=\Big\{ \{u,v\}\in E(\mathbb Z^d) \textup{ s.t.~at least one endpoint is strictly on the left of $\mathbb H_n$}\Big\},\\
    E_{+}(\mathbb H_n)&:=\Big\{ \{u,v\}\in E(\mathbb Z^d) \textup{ s.t.~at least one endpoint is strictly on the right of $\mathbb H_n$}\Big\},\\
    E_{0}(\mathbb H_n)&:=E(\mathbb Z^d)\setminus\left(E_{-}(\mathbb H_n)\cup E_{+}(\mathbb H_n)\right).
\end{align*}
Decompose $\n\in \Omega_{\mathbb Z^d}$ into its restrictions $\n_{-}$, $\n_{+}$ and $\n_0$ to the above three subsets of $E(\mathbb Z^d)$. For the purpose of later applying the switching principle, it is convenient to consider the multigraph representation of these objects. Consider the multigraph $\mathcal{M}_n$
 obtained by taking the union of the multigraph $\mathcal{N}_{-}$ associated with $\n_{-}$ and the reflection $\mathcal{R}_n(\mathcal{N}_+)$ of the multigraph $\mathcal{N}_+$ associated with $\n_{+}$. In this context, the switching principle yields the following result that we will use several times below. 
\begin{Lem}[Switching principle for reflected currents]\label{lem: switching for reflected currents} Let $\beta>0$. Let $\Lambda\subset \mathbb Z^d$ be finite and symmetric under $\mathcal{R}_n$. Assume that $A\subset \Lambda$ and $x\in \Lambda$ are strictly on the left of $\mathbb H_n$. Then,
\begin{equation}
    Z_{\Lambda,\beta}^{A}[x\xleftrightarrow[]{\:\mathcal M_n\:} \mathbb H_n]=Z^{A\Delta\lbrace x,\mathcal{R}_n(x)\rbrace}_{\Lambda,\beta}[x\xleftrightarrow[]{\:\mathcal M_n\:} \mathbb H_n].
\end{equation}
\end{Lem}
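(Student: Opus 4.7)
The plan is to deduce the lemma from the switching principle (Lemma \ref{switching principle}) by constructing, for each current $\mathbf n$ with $\partial \mathbf n = A$ satisfying the event $\mathcal{E} := \{x \xleftrightarrow[]{\mathcal M_n} \mathbb H_n\}$, a sub-multigraph $\mathcal K = \mathcal K(\mathbf n)$ of $\mathbf n$ with $\partial \mathcal K = \{x, \mathcal R_n(x)\}$, and such that the map $\mathbf n \mapsto \mathbf n \Delta \mathcal K$ is a weight-preserving involution preserving $\mathcal E$. The key tool is a \textbf{canonical path} argument in the folded multigraph $\mathcal M_n$.

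First, I would order paths in $\mathcal M_n$ (e.g., lexicographically on vertex sequences) and, for each $\mathbf n$ realising $\mathcal E$, let $\gamma(\mathbf n) = e_1 e_2 \ldots e_k$ denote the canonical path in $\mathcal M_n$ from $x$ to the first vertex $z(\mathbf n) \in \mathbb H_n$ it reaches. Each edge $e_i$ carries a canonical label, either \emph{L} if the chosen copy lies in $\mathcal N_-$, or \emph{R} if it lies in $\mathcal R_n(\mathcal N_+)$. Partitioning the sum according to the labelled canonical path,
\[
Z_{\Lambda,\beta}^{A}[\mathcal E] = \sum_\gamma \sum_{\substack{\partial \mathbf n = A \\ \gamma(\mathbf n) = \gamma}} w_\beta(\mathbf n),
\]
it then suffices to establish the bijection for each fixed $\gamma$.

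For a fixed labelled $\gamma$, I would define $\mathcal K_\gamma \subset \mathbf n$ as the edge-copy multigraph containing one copy of $e_i$ in $\mathcal N_-$ for each L-labelled edge, and one copy of $\mathcal R_n(e_i)$ in $\mathcal N_+$ for each R-labelled edge. The target identity $\partial \mathcal K_\gamma = \{x, \mathcal R_n(x)\}$ is to be verified by a parity count: at $x$ (respectively $\mathcal R_n(x)$) the single incident edge $e_1$ contributes odd parity when its label is L (respectively R); at $z \in \mathbb H_n$ the unique incident edge $e_k$ contributes at $z$ or at $\mathcal R_n(z)=z$, and the identification of these two vertices lets the contribution be absorbed; and at each interior vertex $v_i$ the two incident edges $e_i,e_{i+1}$ should contribute even parity at both $v_i$ and $\mathcal R_n(v_i)$.

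The main obstacle is precisely this last parity check at interior vertices: when $e_i,e_{i+1}$ carry the same L/R label the parities are automatically even, but at a \emph{transition} (alternating labels) the naive lift produces odd parities at both $v_i$ and $\mathcal R_n(v_i)$. I anticipate resolving this by one of two refinements: either selecting the canonical path greedily so that no interior transition occurs (exploiting that in $\mathcal M_n$ one may always prolong a monochromatic L- or R-segment whenever the current has enough copies on the appropriate side), or enriching $\mathcal K_\gamma$ at every transition by paired edges lying in $\mathbf n$ that close off the spurious parity. Once the correct $\mathcal K_\gamma$ is in place, the switching principle (Lemma \ref{switching principle}) gives that $\mathbf n \mapsto \mathbf n \Delta \mathcal K_\gamma$ is a weight-preserving involution between $\{\mathbf n:\partial \mathbf n = A,\gamma(\mathbf n)=\gamma\}$ and $\{\mathbf n':\partial \mathbf n' = A \Delta \{x,\mathcal R_n(x)\},\gamma(\mathbf n')=\gamma\}$, as the symmetric difference merely reshuffles edge-copies between $\mathcal N_-$ and the reflection of $\mathcal N_+$ along $\gamma$, leaving $\mathcal M_n$ invariant and hence preserving both $\mathcal E$ and the canonical path. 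Summing over $\gamma$ concludes the proof.
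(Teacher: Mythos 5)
The proposal misidentifies the basic structure of the argument, and as written it cannot be repaired.

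First, a structural issue: you ask for a sub-multigraph $\mathcal K = \mathcal K(\mathbf n) \subset \mathbf n$ and a ``weight-preserving involution $\mathbf n \mapsto \mathbf n \Delta \mathcal K$.'' Since $\mathcal K \subset \mathbf n$, the symmetric difference $\mathbf n \Delta \mathcal K$ simply deletes the edge-copies of $\mathcal K$ from $\mathbf n$; this strictly lowers multiplicities, so it is neither an involution on the set of currents nor weight-preserving (on a multi-edge of multiplicity $a$ the weight changes from $\beta^a/a!$ to $\beta^{a-1}/(a-1)!$). The switching principle of Lemma~\ref{switching principle} is a statement about sub-multigraphs $\mathcal N$ of a \emph{fixed} ambient multigraph $\mathcal M$; it does not yield a map on single currents of the form you propose.

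Second, the parity bookkeeping for $\partial\mathcal K_\gamma$ is wrong even before the interior transitions. The terminal vertex $z\in\mathbb H_n$ is the endpoint of exactly one edge-copy of $\mathcal K_\gamma$ whether $e_k$ is labelled L or R, so $z$ is always a source of $\mathcal K_\gamma$; nothing is ``absorbed'' there. Moreover, $x$ is a source precisely when $e_1$ is labelled L and $\mathcal R_n(x)$ is a source precisely when $e_1$ is labelled R, so for any fixed $\gamma$ at most one of them appears. Thus for a monochromatic path $\partial\mathcal K_\gamma$ is $\{x,z\}$ or $\{\mathcal R_n(x),z\}$, never $\{x,\mathcal R_n(x)\}$, and each transition contributes an extra pair $\{v_i,\mathcal R_n(v_i)\}$. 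Neither of your two suggested fixes addresses this: a transition-free path still has $z$ as a spurious source, and there is no supply of ``paired edges lying in $\mathbf n$'' guaranteed to close off parity at $z$ or at the transition pairs.

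The paper's proof avoids all of this by never trying to find a $\mathcal K\subset\mathbf n$ with $\partial\mathcal K = \{x,\mathcal R_n(x)\}$. It decomposes $\mathbf n = \mathbf n_0 + \mathbf n_- + \mathbf n_+$, forms the folded multigraph $\mathcal M_n = \mathcal N_- \cup \mathcal R_n(\mathcal N_+)$, and applies Lemma~\ref{switching principle} with ambient multigraph $\mathcal M_n$, sub-multigraph $\mathcal N = \mathcal R_n(\mathcal N_+)$, and $\mathcal K = \gamma$ the canonical path from $x$ to some $\alpha\in\mathbb H_n$. This changes $\partial\mathcal N$ from $D$ to $D\Delta\{x,\alpha\}$; unfolding, the $\alpha$-contributions to $\partial\mathbf n_-$ and $\partial\mathbf n_+$ cancel at $\alpha = \mathcal R_n(\alpha)$, and only $\{x,\mathcal R_n(x)\}$ survives. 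Weight preservation is automatic because $w(\mathcal M_n)$ is held fixed and only the \emph{count} of admissible sub-multigraphs is switched; the individual currents $\mathbf n$ are reshuffled (a path is transferred between the two halves of the fold), not symmetrically differenced with a fixed $\mathcal K\subset\mathbf n$. Your instinct that the terminal vertex on $\mathbb H_n$ should ``cancel'' is exactly the mechanism, but it happens at $\alpha$ across the two folded halves, and it requires the path-transfer operation rather than edge deletion.
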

\begin{proof} By definition,
\begin{equation}\label{eq: proof switching 1}
	Z^{A}_{\Lambda,\beta}[x\xleftrightarrow[]{\:\mathcal M_n\:} \mathbb H_n]=\sum_{\n_0}w_\beta(\n_0)\sum_{\substack{\n_-, \:\n_+\\ \partial(\n_-+\n_+)=\sn_0\Delta A}}\mathds{1}_{x\xleftrightarrow[]{\:\mathcal M_n\:} \mathbb H_n}\,w_\beta(\n_-)w_\beta(\n_+).
\end{equation}
To lighten notations, we will denote $\sum_{\sn_-=A, \: \sn_+=B}$ the sum over pair of currents $(\n_-,\n_+)$ satisfying $(\sn_-,\sn_+)=(A,B)$.
When it exists, call $\Gamma=\Gamma(\mathcal{M}_n,x)$ the smallest path connecting $x$ to $\mathbb H_n$ in $\mathcal M_n$ (according to some fixed ordering). In order to be in the setup of application of Lemma \ref{switching principle}, we decompose \eqref{eq: proof switching 1} according to the sources of $\n_-$ and $\n_+$ as well as $\Gamma$, and adopt the multigraph notations for the currents.  We call $\alpha$ the endpoint of $\Gamma$ and also decompose \eqref{eq: proof switching 1} according to the (unique) value of $\alpha$. Together with \eqref{eq: proof switching 1} we obtain,
\begin{align*}
    Z^{A}_{\Lambda,\beta}[x\xleftrightarrow[]{\:\mathcal M_n\:} \mathbb H_n]   
    &= \sum_{\n_0}w_\beta(\n_0)\sum_{\substack{D\subset \mathbb H_n\cap \Lambda}}\sum_{\substack{\alpha\in \mathbb H_n\\\gamma:x \rightarrow \alpha}}\sum_{\substack{\sn_-=D\Delta \sn_0\Delta A\\\sn_+=D}}w_\beta(\n_-)w_\beta(\n_+)\mathds{1}_{\Gamma=\gamma}
    \\
    &=\sum_{\n_0}w_\beta(\n_0)\sum_{\substack{D\subset \mathbb H_n\cap \Lambda}}\sum_{\substack{\alpha\in \mathbb H_n\\\gamma:x \rightarrow \alpha}}\sum_{\partial \mathcal M_n=\partial\n_0\Delta A}w_\beta(\mathcal M_n)\mathds{1}_{\Gamma=\gamma}\sum_{\substack{\mathcal N\subset \mathcal M_n\\ \partial \mathcal N=D}} 1    \\
    &=\sum_{\n_0}w_\beta(\n_0)\sum_{\substack{D\subset \mathbb H_n\cap \Lambda}}\sum_{\substack{\alpha\in \mathbb H_n\\\gamma:x \rightarrow \alpha}}\sum_{\partial \mathcal M_n=\partial\n_0\Delta A}w_\beta(\mathcal M_n)\mathds{1}_{\Gamma=\gamma}\sum_{\substack{\mathcal N\subset \mathcal M_n\\\partial \mathcal N=D\Delta \{x,\alpha\}}} 1,
\end{align*} 
where we used the switching principle from Lemma \ref{switching principle} in the third line. As a result, we obtained,
\begin{align*}
    Z^{A}_{\Lambda,\beta}[x\xleftrightarrow[]{\:\mathcal M_n\:} \mathbb H_n]&=\sum_{\n_0}w_\beta(\n_0)\sum_{\substack{D\subset \mathbb H_n\cap \Lambda}}\sum_{\substack{\alpha\in \mathbb H_n\\\gamma:x \rightarrow \alpha}}\sum_{\substack{\sn_-=D\Delta \sn_0\Delta A \Delta \{x,\alpha\}\\\sn_+=D\Delta \{\mathcal R_n(x),\alpha\}}}w_\beta(\n_-)w_\beta(\n_+)\mathds{1}_{\Gamma=\gamma}
    \\
    &=\sum_{\n_0}w_\beta(\n_0)\sum_{\substack{\n_-,\n_+\\\partial(\n_-+\n_+)=\sn_0\Delta A\Delta \{x,\mathcal R_n(x)\}}}w_\beta(\n_-)w_\beta(\n_+)\mathds{1}_{x\xleftrightarrow[]{\:\mathcal M_n\:} \mathbb H_n}
   \\
    &=Z^{A\Delta\lbrace x,\mathcal{R}_n(x)\rbrace}_{\Lambda,\beta}[x\xleftrightarrow[]{\:\mathcal M_n\:} \mathbb H_n],
\end{align*}
which concludes the proof.
\end{proof}
\begin{figure}
\begin{center}
\includegraphics[scale=1.0]{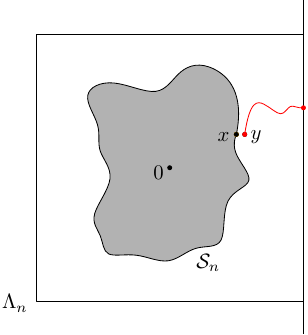}
\put(5,10){$\mathbb H_n$}
\end{center}
\caption{The set $\mathcal{S}_n$ is represented in grey. On its boundary, a point $x$ (in black) is next to a point $y$ (in red) which connects to $\mathbb H_n$ in $\mathcal{M}_n$, i.e.~which does not belong to $\mathcal{S}_n(+\mathbf{e}_1)$.}
\label{figure: Lower bound}
\end{figure}

The above quantities were all defined with respect to the direction $\mathbf e_1$. In order to generalise these objects, let us write $\mathcal{M}_n(+\mathbf{e}_1)=\mathcal{M}_n$, $\mathcal R_n(+\mathbf{e}_1)=\mathcal R_n$, $\mathbb H_n(+\mathbf{e}_1)=\mathbb H_n$, and define
\begin{equation}
    \mathcal{S}_n(+\mathbf{e}_1):=\Big\lbrace x\in \Lambda_n: \: x \overset{\mathcal M_n(+\mathbf e_1)}{\centernot\longleftrightarrow} \mathbb H_n(+\mathbf{e}_1)\Big\rbrace.
\end{equation}
\noindent Similarly, define $\mathbb H_n(\pm \mathbf{e}_i)$, $\mathcal R_n(\pm \mathbf{e}_i)$, $\mathcal{M}_n(\pm \mathbf{e}_i)$, and $\mathcal{S}_n(\pm\mathbf{e}_i)$ for $1\leq i \leq d$ in the other $2d-1$ directions. 
Let (see Figure \ref{figure: Lower bound})
\begin{equation}
    \mathcal{S}_n:=\bigcap_{1\leq i \leq d}\left(\mathcal{S}_n(+\mathbf{e}_i)\cap \mathcal{S}_n(-\mathbf{e}_i)\right).
\end{equation}
Note that by definition, $\mathcal{S}_n\subset \Lambda_{n-1}$. Recall the definitions of $\mathbf P^\emptyset_{\beta}$ and $\varphi_\beta(S)$ from above.

 By definition of $L(\beta)$, if $1\leq n\leq L(\beta)$ and $S\subset \Lambda_{n-1}$, then $\varphi_\beta(S)\geq 1/2$. As a result, if $1\leq n\leq L(\beta)$, then
\begin{equation}\label{eq: proof lower 1}
    \tfrac{1}{2}\mathbf P_\beta^\emptyset[0 \in \mathcal{S}_n]\leq \mathbf E_\beta^\emptyset[\mathds{1}_{0\in \mathcal{S}_n}\varphi_\beta(\mathcal{S}_n)].
\end{equation}
The first step is to observe that the left-hand side in \eqref{eq: proof lower 1} is bounded away from $0$.

\begin{Lem}\label{lem :lemma 1}  Let $d\geq 3$. There exist $c,N_0>0$ such that for $\beta \leq \beta_c$ and $n\geq N_0$,
\begin{equation}
    \mathbf P_\beta^\emptyset[0\in \mathcal{S}_n]\geq c.
\end{equation}
\end{Lem}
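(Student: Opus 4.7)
The plan is to bound the complement of $\{0 \in \mathcal{S}_n\}$ by a union bound over the $2d$ coordinate directions and then control each single-direction term via the switching principle for reflected currents together with the infrared bound. By the invariance of $\mathbf{P}_\beta^\emptyset$ under lattice reflections and axis-permutations, all $2d$ events $\{0 \notin \mathcal{S}_n(\pm\mathbf{e}_i)\}$ share the same probability, so
\begin{equation*}
\mathbf{P}_\beta^\emptyset[0 \in \mathcal{S}_n] \;\geq\; 1 - 2d\,\mathbf{P}_\beta^\emptyset\Bigl[0 \overset{\mathcal{M}_n(+\mathbf{e}_1)}{\longleftrightarrow} \mathbb{H}_n(+\mathbf{e}_1)\Bigr],
\end{equation*}
and it suffices to show that the right-most probability tends to $0$ as $n \to \infty$, uniformly in $\beta \leq \beta_c$.

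To estimate that single-direction probability, I would work in a finite box $\Lambda \subset \mathbb{Z}^d$ symmetric under $\mathcal{R}_n$ (for instance a box centered on the hyperplane $\mathbb{H}_n$) and apply Lemma \ref{lem: switching for reflected currents} with $A = \emptyset$ and $x = 0$; note that $0$ lies strictly to the left of $\mathbb{H}_n$ provided $n \geq 1$. This produces the identity
\begin{equation*}
Z_{\Lambda,\beta}^{\emptyset}\bigl[0 \xleftrightarrow{\,\mathcal{M}_n\,} \mathbb{H}_n\bigr] \;=\; Z_{\Lambda,\beta}^{\{0,\mathcal{R}_n(0)\}}\bigl[0 \xleftrightarrow{\,\mathcal{M}_n\,} \mathbb{H}_n\bigr] \;\leq\; Z_{\Lambda,\beta}^{\{0,\mathcal{R}_n(0)\}}.
\end{equation*}
Dividing by $Z^\emptyset_{\Lambda,\beta}$, using $\mathcal{R}_n(0) = 2n\mathbf{e}_1$, and passing to the limit $\Lambda \nearrow \mathbb{Z}^d$ via \eqref{equation correlation rcr} yields the clean domination
\begin{equation*}
\mathbf{P}_\beta^\emptyset\bigl[0 \xleftrightarrow{\,\mathcal{M}_n\,} \mathbb{H}_n\bigr] \;\leq\; \langle \sigma_0 \sigma_{2n\mathbf{e}_1}\rangle_\beta.
\end{equation*}

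To conclude, I would invoke the infrared bound \eqref{eq: Infrared bound}, which gives $\langle \sigma_0 \sigma_{2n\mathbf{e}_1}\rangle_\beta \leq C/(2n)^{d-2}$. Since $d \geq 3$, this tends to $0$, so choosing $N_0$ large enough that $2d\,C/(2N_0)^{d-2} \leq 1/2$ yields $\mathbf{P}_\beta^\emptyset[0 \in \mathcal{S}_n] \geq 1/2$ for every $\beta \leq \beta_c$ and every $n \geq N_0$. I do not anticipate a real obstacle: Lemma \ref{lem: switching for reflected currents} was tailored precisely to convert a reflected connectivity to $\mathbb{H}_n$ into a two-point function between $0$ and its mirror image, and the infrared bound then supplies the quantitative decay. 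The only mildly technical point is ensuring that switching is carried out in a $\mathcal{R}_n$-symmetric finite box before passing to the infinite-volume limit, which is standard.
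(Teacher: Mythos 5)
Your proposal is correct and follows essentially the same route as the paper: a union bound over the $2d$ reflection directions, then the switching principle for reflected currents (Lemma \ref{lem: switching for reflected currents}) in a finite $\mathcal{R}_n$-symmetric box to dominate $\mathbf{P}_\beta^\emptyset[0 \xleftrightarrow{\mathcal{M}_n} \mathbb{H}_n]$ by $\langle\sigma_0\sigma_{2n\mathbf{e}_1}\rangle_\beta$, and finally the infrared bound. The only cosmetic difference is that the paper observes the equality $Z_{\Lambda,\beta}^{\{0,\mathcal{R}_n(0)\}}[0\xleftrightarrow{\mathcal{M}_n}\mathbb{H}_n]=Z_{\Lambda,\beta}^{\{0,\mathcal{R}_n(0)\}}$ (any current with sources $\{0,\mathcal{R}_n(0)\}$ necessarily connects $0$ to $\mathbb{H}_n$), whereas you simply drop the indicator to get an inequality; either step suffices.
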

\begin{proof}  Using a union bound and the symmetries of $\mathbf P_\beta^\emptyset$,
\begin{equation}
    \mathbf P_\beta^\emptyset[0\notin \mathcal{S}_n]\leq \sum_{i=1}^d(\mathbf P_\beta^\emptyset[0\notin \mathcal{S}_n(+\mathbf{e}_i)]+\mathbf P_\beta^\emptyset[0\notin \mathcal{S}_n(-\mathbf{e}_i)])=2d \mathbf P_\beta^\emptyset[0\notin \mathcal{S}_n(+\mathbf{e}_1)].
\end{equation}
Fix for a moment $\Lambda\subset\mathbb Z^d$ finite, symmetric under $\mathcal{R}_n$, and containing $\Lambda_{4n}$. 
The switching principle for reflected currents (Lemma \ref{lem: switching for reflected currents}) implies
\begin{equation}
    Z^\emptyset_{\Lambda,\beta}[0\notin \mathcal{S}_n(+\mathbf{e}_1)]=Z_{\Lambda,\beta}^{\emptyset}[0\xleftrightarrow[]{\:\mathcal M_n\:}\mathbb H_n]
    =
    \sum_{\n\in \Omega_\Lambda: \:\sn=\emptyset}w_\beta(\n)\mathds{1}_{0\xleftrightarrow[]{\:\mathcal M_n\:} \mathbb H_n}= Z_{\Lambda,\beta}^{\{0,\mathcal{R}_n(0)\}}.
\end{equation}
Dividing the above display by $Z^\emptyset_{\Lambda,\beta}$ we get
\begin{align}
	\mathbf P^{\emptyset}_{\Lambda,\beta}[0\notin \mathcal S_n(+\mathbf{e}_1)]=\mathbf P^\emptyset_{\Lambda,\beta}[0\xleftrightarrow[]{\:\mathcal M_n\:} \mathbb H_n]\leq \frac{Z_{\Lambda,\beta}^{\{0,\mathcal R_n(0)\}}}{Z^\emptyset_{\Lambda,\beta}}&=\langle \sigma_0\sigma_{\mathcal R_n(0)}\rangle_{\Lambda,\beta}
	\leq \langle \sigma_0\sigma_{2n\mathbf{e}_1}\rangle_{\beta},
\end{align}
where in the last inequality we used the monotonicity of the two-point function in $\Lambda$. The proof follows readily from the infrared bound \eqref{eq: Infrared bound} and from taking the limit $\Lambda\nearrow \mathbb Z^d$.
\end{proof}

Combining \eqref{eq: proof lower 1} and Lemma \ref{lem :lemma 1}, we obtain that for some $c,N_0>0$, if $N_0\leq n\leq L(\beta)$,
\begin{equation}\label{eq: proof lower 2}
    \frac{c}{2\beta}\leq \mathbf E^\emptyset_\beta\Big[\mathds{1}_{0\in \mathcal{S}_n}\sum_{\substack{x\in \mathcal{S}_n\\y\sim x, \:y\notin \mathcal{S}_n}}\langle \sigma_0\sigma_x\rangle_{\mathcal{S}_n,\beta}\Big].
\end{equation}
Notice that if $x\in \mathcal{S}_n$, $y\sim x$ and $y\notin \mathcal{S}_n$, then there exist $1\leq i \leq d$ and $\varepsilon\in \lbrace \pm 1\rbrace$ such that $y\notin \mathcal{S}_n(\varepsilon \mathbf{e}_i)$. As a result,
\begin{align*}
    \mathds{1}_{0\in \mathcal{S}_n}\sum_{\substack{x\in \mathcal{S}_n\\y\sim x, \:y\notin \mathcal{S}_n}}\langle
    \sigma_0\sigma_x\rangle_{\mathcal{S}_n,\beta}
    &\leq 
    \mathds{1}_{0\in 
    \mathcal{S}_n}\sum_{i=1}^d\sum_{\varepsilon\in \lbrace \pm 1\rbrace}\sum_{\substack{x\in \mathcal{S}_n\\y\sim x, \:y\notin \mathcal{S}_n(\varepsilon\mathbf{e}_i)}}\langle 
    \sigma_0\sigma_x\rangle_{\mathcal{S}_n,\beta}
\\&\leq \mathds{1}_{0\in 
    \mathcal{S}_n}
    \sum_{i=1}^d\sum_{\varepsilon\in \lbrace \pm 1\rbrace}\sum_{\substack{x\in \mathcal{S}_n(\varepsilon\mathbf{e}_i)\cap \Lambda_{n-1} \\y\sim x, \:y\notin \mathcal{S}_n(\varepsilon\mathbf{e}_i)}}\langle 
    \sigma_0\sigma_x\rangle_{\mathcal{S}_n(\varepsilon\mathbf{e}_i)\cap \Lambda_{n-1},\beta},
\end{align*}
where we used that $\mathcal{S}_n\subset \mathcal{S}_n(\varepsilon\mathbf{e}_i)\cap \Lambda_{n-1}$ together with Griffiths' inequality on the second line. Using the symmetries of $\mathbf P_\beta^\emptyset$ and \eqref{eq: proof lower 2}, we get
\begin{equation}\label{eq: proof lower bound 3}
    \frac{c}{4d\beta}\leq \mathbf E_\beta^\emptyset\Big[\mathds{1}_{0\in \mathcal{S}_n^1}\sum_{\substack{x\in \mathcal{S}_n^1\\y\sim x, \:y\in \Lambda_n}}\mathds{1}_{y\xleftrightarrow[]{\:\mathcal M_n\:} \mathbb H_n}\langle \sigma_0\sigma_x\rangle_{\mathcal{S}_n^1,\beta}\Big],
\end{equation}
where $\mathcal{S}_n^1:=\mathcal{S}_n(+\mathbf{e}_1)\cap \Lambda_{n-1}$. Theorem \ref{prop: intermediate step lower bound} follows from \eqref{eq: proof lower bound 3} and Lemma \ref{lem: lower proof 3} below.
\begin{Lem}\label{lem: lower proof 3} For $\beta\leq \beta_c$,
\begin{equation}
    \mathbf E^\emptyset_\beta\Big[\mathds{1}_{0\in \mathcal{S}_n^1}\sum_{\substack{x\in \mathcal{S}_n^1\\y\sim x, \:y\in \Lambda_n}}\mathds{1}_{y\xleftrightarrow[]{\:\mathcal M_n\:}\mathbb H_n}\langle \sigma_0\sigma_x\rangle_{\mathcal{S}_n^1,\beta}\Big]\leq \sum_{\substack{x,y\in \Lambda_n\\y\sim x}}(\langle \sigma_0\sigma_x\rangle_\beta-\langle \sigma_0\sigma_{\mathcal{R}_n(x)}\rangle_\beta)\langle \sigma_y \sigma_{\mathcal{R}_n(y)}\rangle_\beta.
\end{equation}
\end{Lem}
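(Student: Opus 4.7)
The plan is to carry out the argument entirely inside a large symmetric finite volume $\Lambda\supset\Lambda_{4n}$ using the random current representation, and to send $\Lambda\nearrow\mathbb Z^d$ only at the end. The proof rests on three successive switching-type manipulations: a standard switching inequality to absorb the factor $\langle\sigma_0\sigma_x\rangle_{\mathcal S_n^1,\beta}$; Lemma \ref{lem: switching for reflected currents} applied twice to convert events involving $\mathbb H_n$ into source changes at $\{x,\mathcal R_n(x)\}$ and $\{y,\mathcal R_n(y)\}$; and finally a two-current switching combined with a pairing/monotonicity observation to decouple the resulting four-source partition function into the desired product.

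The first step is a stopping-set/Fubini manipulation. Write $\langle\sigma_0\sigma_x\rangle_{\mathcal S_n^1,\beta}=Z^{\{0,x\}}_{\mathcal S_n^1,\beta}/Z^\emptyset_{\mathcal S_n^1,\beta}$ and split the empty-sourced current $\n$ into $\n^{\mathrm{in}}$ (edges both of whose endpoints lie in $\mathcal S_n^1(\n)$) and $\n^{\mathrm{out}}$ (the remaining edges). The crucial point is that $\mathcal S_n^1(\n)=\mathcal S_n^1(\n^{\mathrm{out}})$: editing edges inside $\mathcal S_n^1$ only rearranges clusters that already fail to touch $\mathbb H_n$ in $\mathcal M_n$. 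Conditioning on $\n^{\mathrm{out}}$, the sum over $\n^{\mathrm{in}}$ yields $Z^{\partial\n^{\mathrm{out}}|_S}_{S,\beta}$, which combines with $Z^{\{0,x\}}_{S,\beta}/Z^\emptyset_{S,\beta}$ through the general switching inequality $Z^A_{S,\beta}Z^B_{S,\beta}\le Z^{A\Delta B}_{S,\beta}Z^\emptyset_{S,\beta}$ (a consequence of $|\{\mathcal N\subset\mathcal N_{\mathrm{tot}}:\partial\mathcal N=A\}|\le|\{\mathcal N:\partial\mathcal N=\emptyset\}|$). Reassembling $\n^{\mathrm{in}}+\n^{\mathrm{out}}$ into a single current with sources $\{0,x\}$ (and $\mathcal S_n^1$ unchanged) gives
$$\mathrm{LHS}\cdot Z^\emptyset_{\Lambda,\beta}\ \le\ \sum_{\substack{x,y\in\Lambda_n\\y\sim x}} Z^{\{0,x\}}_{\Lambda,\beta}\bigl[0\in\mathcal S_n^1,\ y\xleftrightarrow[]{\:\mathcal M_n\:}\mathbb H_n\bigr].$$

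Next, I apply Lemma \ref{lem: switching for reflected currents} with swap vertex $y$ (the case $y\in\mathbb H_n$ is trivial, as $\mathcal R_n(y)=y$ and $\langle\sigma_y\sigma_{\mathcal R_n(y)}\rangle=1$). The event $\{0\in\mathcal S_n^1\}$ depends only on $\mathcal M_n$ and is therefore preserved, while once $\{y,\mathcal R_n(y)\}$ becomes a source pair the event $\{y\leftrightarrow\mathbb H_n\}$ is automatic since the path in $\n$ pairing them must cross $\mathbb H_n$. Hence the previous display equals $\sum_{x,y} Z^{\{0,x,y,\mathcal R_n(y)\}}_{\Lambda,\beta}[0\in\mathcal S_n^1]$. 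A parallel use of the same lemma with swap vertex $x$, together with the pairing remark that for $\partial\n=\{0,x\}$ the events $\{0\in\mathcal S_n^1\}$ and $\{x\not\leftrightarrow\mathbb H_n\text{ in }\mathcal M_n\}$ coincide, yields the identity
$$\frac{Z^{\{0,x\}}_{\Lambda,\beta}[0\in\mathcal S_n^1]}{Z^\emptyset_{\Lambda,\beta}}\ =\ \langle\sigma_0\sigma_x\rangle_{\Lambda,\beta}-\langle\sigma_0\sigma_{\mathcal R_n(x)}\rangle_{\Lambda,\beta},$$
which is precisely the gradient factor appearing on the right-hand side of the lemma.

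The decisive step is the decoupling of $Z^{\{0,x,y,\mathcal R_n(y)\}}_{\Lambda,\beta}[0\in\mathcal S_n^1]$. Introducing an auxiliary independent pair $(\n^{(1)},\n^{(2)})$ with sources $\{0,x\}$ and $\{y,\mathcal R_n(y)\}$, the two-current switching expresses $Z^{\{0,x\}}_{\Lambda,\beta}Z^{\{y,\mathcal R_n(y)\}}_{\Lambda,\beta}\mathbf P^{\{0,x\},\{y,\mathcal R_n(y)\}}_{\Lambda,\beta}[0\in\mathcal S_n^1(\n^{(1)}{+}\n^{(2)})]$ as a sum over $\mathbf n_{\mathrm{tot}}$ with $\partial=\{0,x,y,\mathcal R_n(y)\}$ weighted by $|\{\mathcal N\subset\mathcal N_{\mathrm{tot}}:\partial\mathcal N=\{0,x\}\}|$. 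On $\{0\in\mathcal S_n^1\}$, the pairing argument forces $0\leftrightarrow x$ in $\mathcal N_{\mathrm{tot}}$, so by the switching principle this count equals $|\{\mathcal N:\partial\mathcal N=\emptyset\}|\ge 1$, yielding $Z^{\{0,x,y,\mathcal R_n(y)\}}_{\Lambda,\beta}[0\in\mathcal S_n^1]\le Z^{\{0,x\}}_{\Lambda,\beta}Z^{\{y,\mathcal R_n(y)\}}_{\Lambda,\beta}\mathbf P^{\{0,x\},\{y,\mathcal R_n(y)\}}_{\Lambda,\beta}[0\in\mathcal S_n^1(\n^{(1)}{+}\n^{(2)})]$. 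Because adding edges to a current can only create new connections in $\mathcal M_n$, the event $\{0\in\mathcal S_n^1(\cdot)\}$ is decreasing in the current; by independence of $\n^{(1)}$ and $\n^{(2)}$ we obtain $\mathbf P^{\{0,x\},\{y,\mathcal R_n(y)\}}_{\Lambda,\beta}[0\in\mathcal S_n^1(\n^{(1)}{+}\n^{(2)})]\le\mathbf P^{\{0,x\}}_{\Lambda,\beta}[0\in\mathcal S_n^1(\n^{(1)})]$. Inserting the gradient identity from the previous paragraph, dividing by $Z^\emptyset_{\Lambda,\beta}$, summing over $x,y$, and letting $\Lambda\nearrow\mathbb Z^d$ produces the lemma. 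The main technical hurdle is the stopping-set manipulation of the first step—verifying the independence of $\mathcal S_n^1$ from the inner edges and executing the reassembly cleanly; the pairing/monotonicity argument used in the decoupling is conceptually transparent but must be combined carefully with the sub-multigraph counts provided by the switching principle.
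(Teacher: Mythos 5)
Your proposal goes off track at the decoupling step, and the error is not a detail that can be patched---it is a normalization problem that makes the final bound vacuous in the infinite-volume limit.

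To see it, track powers of the partition function $Z^\emptyset_{\Lambda,\beta}$ through your chain. After steps 1 and 2 (which are workable, if somewhat more intricate than the paper's conditioning on the stopping set $\mathcal G_n$) you have
\begin{equation*}
\mathrm{LHS}\ \le\ \frac{1}{Z^\emptyset_{\Lambda,\beta}}\sum_{x,y} Z^{\{0,x,y,\mathcal R_n(y)\}}_{\Lambda,\beta}\bigl[0\in\mathcal S_n^1\bigr].
\end{equation*}
Your step 3, via the two-current switching bound $Z^{A\Delta B}[F]\le Z^A Z^B\,\mathbf P^{A\otimes B}[F]$ (which uses only that the submultigraph count is $\ge 1$) and then monotonicity, yields
\begin{equation*}
Z^{\{0,x,y,\mathcal R_n(y)\}}_{\Lambda,\beta}\bigl[0\in\mathcal S_n^1\bigr]\ \le\ Z^{\{0,x\}}_{\Lambda,\beta}\bigl[0\in\mathcal S_n^1\bigr]\cdot Z^{\{y,\mathcal R_n(y)\}}_{\Lambda,\beta}.
\end{equation*}
Dividing by $Z^\emptyset_{\Lambda,\beta}$ and inserting your gradient identity then gives
$\mathrm{LHS}\le\sum_{x,y}\bigl(\langle\sigma_0\sigma_x\rangle_{\Lambda,\beta}-\langle\sigma_0\sigma_{\mathcal R_n(x)}\rangle_{\Lambda,\beta}\bigr)\,Z^{\{y,\mathcal R_n(y)\}}_{\Lambda,\beta}$, whereas the target has the \emph{correlation function} $\langle\sigma_y\sigma_{\mathcal R_n(y)}\rangle_{\Lambda,\beta}=Z^{\{y,\mathcal R_n(y)\}}_{\Lambda,\beta}/Z^\emptyset_{\Lambda,\beta}$ in that slot. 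You are short exactly one factor of $1/Z^\emptyset_{\Lambda,\beta}$. Since $Z^\emptyset_{\Lambda,\beta}$ is exponentially large in $|\Lambda|$, this is not a constant you can absorb: the bound blows up as $\Lambda\nearrow\mathbb Z^d$ and proves nothing. Structurally, the reason is that $Z^{A\Delta B}$ is linear in partition functions while $Z^AZ^B\,\mathbf P^{A\otimes B}[\cdot]$ is quadratic; the trivial lower bound ``count $\ge 1$'' cannot recover the missing $Z^\emptyset_{\Lambda,\beta}$, which on the event $\{0\leftrightarrow x\text{ in }\mathcal N_{\mathrm{tot}}\}$ is the \emph{typical} size of the count $|\{\mathcal N\subset\mathcal N_{\mathrm{tot}}:\partial\mathcal N=\emptyset\}|$, not its minimum.

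The paper avoids this by not going to four sources at all for the decoupling. Instead, starting from $Z^{\{0,x\}}_{\Lambda,\beta}[0,x\overset{\mathcal M_n}{\centernot\longleftrightarrow}\mathbb H_n,\ y\xleftrightarrow[]{\:\mathcal M_n\:}\mathbb H_n]$ it conditions on the reflected cluster $\mathcal C_n(0)$ of the origin, which is a genuine stopping set for the current. The domain Markov property then factorizes the partition function as $Z^{\{0,x\}}_{\Lambda,\beta}[\mathcal C_n(0)=S]\cdot\frac{Z^\emptyset_{\Lambda\setminus S,\beta}[y\xleftrightarrow[]{\:\mathcal M_n\:}\mathbb H_n]}{Z^\emptyset_{\Lambda\setminus S,\beta}}$, and that \emph{ratio} is what produces $\langle\sigma_y\sigma_{\mathcal R_n(y)}\rangle_{\Lambda\setminus S,\beta}$, which is then bounded by $\langle\sigma_y\sigma_{\mathcal R_n(y)}\rangle_{\Lambda,\beta}$ using Griffiths' inequality. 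Your two-current switching manipulation has no mechanism to generate that ratio; the missing ingredient is precisely this conditioning-on-a-stopping-set argument.
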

\begin{proof} Fix $\Lambda\subset \mathbb Z^d$ finite, symmetric under $\mathcal{R}_n$, which contains $\Lambda_{4n}$, and set $\Lambda^-:=\{x\in \Lambda:x_1< n\}$. Observe that 
\begin{equation}
    \mathbf E^\emptyset_{\Lambda,\beta}\Big[\mathds{1}_{0\in \mathcal{S}_n^1}\sum_{\substack{x\in \mathcal{S}_n^1\\y\sim x, \:y\in \Lambda_n}}\mathds{1}_{y\xleftrightarrow[]{\:\mathcal M_n\:} \mathbb H_n}\langle \sigma_0\sigma_x\rangle_{\mathcal{S}_n^1,\beta}\Big]
    =
    \sum_{\substack{x,y\in \Lambda_n\\y\sim x}}\mathbf E^{\emptyset}_{\Lambda,\beta}\Big[\mathds{1}_{ 0,x\in \mathcal{S}_n^1,\: y \xleftrightarrow[]{\:\mathcal M_n\:} \mathbb H_n}\langle\sigma_0\sigma_x\rangle_{\mathcal{S}_n^1,\beta}\Big].
\end{equation}
Set $\mathcal{G}_n$ to be the union of the set $\{x\in \Lambda^-:x\overset{\mathcal M_n}{\centernot\longleftrightarrow} \mathbb H_n\}$ and its image under the reflection $\mathcal R_n$. Note that $\mathcal G_n\cap \Lambda_{n-1}=\mathcal S_n^1$. For $x \in \Lambda_{n}$ and $y\sim x$ with $y\in \Lambda_n$, write
\begin{align}
    \mathbf E^{\emptyset}_{\Lambda,\beta}\Big[\mathds{1}_{ 0,x\in \mathcal{S}_n^1,\: y \xleftrightarrow[]{\:\mathcal M_n\:} \mathbb H_n}\langle\sigma_0\sigma_x\rangle_{\mathcal{S}_n^1,\beta}\Big]
    &\leq \mathbf E^{\emptyset}_{\Lambda,\beta}\Big[\mathds{1}_{ 0,x\in \mathcal{G}_n,\: y \xleftrightarrow[]{\:\mathcal M_n\:} \mathbb H_n}\langle\sigma_0\sigma_x\rangle_{\mathcal{G}_n,\beta}\Big]
    \notag\\ \label{eq: final lemma 1} &=
   \sum_{\substack{S\subset \Lambda\\S\ni 0,x\\ S\not\ni y}}\mathbf E^{\emptyset}_{\Lambda,\beta}\left[\mathds 1_{\mathcal{G}_n=S
   } \frac{Z^{\{0,x\}}_{S,\beta}}{Z^\emptyset_{S,\beta}}\right].
\end{align} 
Now, if $\n$ is such that $\mathcal G_n=S$, and $E(S,\Lambda\setminus S)$ denotes the set of edges between $S$ and its complement, one must have that $\n_e=0$ for all $e\in E(S,\Lambda\setminus S)$. With a small abuse of notation, we let $Z^\emptyset_{E(\Lambda\setminus S)\cup E(S,\Lambda\setminus S),\beta}$ denote the partition function of sourceless currents defined on the union of edge sets $E(\Lambda\setminus S)\cup E(S,\Lambda\setminus S)$. Then,
\begin{align}
    Z^\emptyset_{\Lambda,\beta}[\mathcal{G}_n=S]&=Z^\emptyset_{S,\beta}Z^\emptyset_{E(\Lambda\setminus S)\cup E(S,\Lambda\setminus S),\beta}[\mathcal{G}_n=S, \: \n_e=0 \: \:\forall e\in E(S,\Lambda\setminus S)]\label{eq:g_n argument 1}
    \\ Z^{\{0,x\}}_{\Lambda,\beta}[\mathcal{G}_n=S]&=Z^{\{0,x\}}_{S,\beta}Z^\emptyset_{E(\Lambda\setminus S)\cup E(S,\Lambda\setminus S),\beta}[\mathcal{G}_n=S, \: \n_e=0 \: \:\forall e\in E(S,\Lambda\setminus S)].\label{eq:g_n argument 2}
\end{align}

As a result,
\begin{equation}
    \mathbf E^{\emptyset}_{\Lambda,\beta}\Big[\mathds 1_{\mathcal{G}_n=S} \frac{Z^{\{0,x\}}_{S,\beta}}{Z^\emptyset_{S,\beta}}\Big]=\frac{Z^\emptyset_{\Lambda,\beta}[\mathcal G_n=S]}{Z^\emptyset_{\Lambda,\beta}} \frac{Z^{\{0,x\}}_{S,\beta}}{Z^\emptyset_{S,\beta}}=\frac{Z^{\{0,x\}}_{\Lambda,\beta}[\mathcal{G}_n=S]}{Z^\emptyset_{\Lambda,\beta}}.
\end{equation}
We rewrite the right-hand side of \eqref{eq: final lemma 1} as
\begin{equation}\label{eq:ha}
    \sum_{\substack{S\subset \Lambda\\S\ni 0,x\\ S\not\ni y}}\frac{Z^{\{0,x\}}_{\Lambda,\beta}[\mathcal{G}_n=S]}{Z^\emptyset_{\Lambda,\beta}}=\frac{Z^{\{0,x\}}_{\Lambda,\beta}[0,x \overset{\mathcal M_n}{\centernot\longleftrightarrow}\mathbb H_n, \: y\xleftrightarrow[]{\:\mathcal M_n\:} \mathbb H_n 
    ]}{Z^\emptyset_{\Lambda,\beta}}.
\end{equation}
Now, we analyse $Z^{\{0,x\}}_{\Lambda,\beta}[0,x\overset{\mathcal M_n}{\centernot\longleftrightarrow} \mathbb H_n, \: y\xleftrightarrow[]{\:\mathcal M_n\:}\mathbb H_n]$ by conditioning on the set $\mathcal{C}_n(0)$  defined as the union of the cluster of $0$ in $\mathcal{M}_n$ and its reflection with respect to $\mathbb H_n$. Write,
\begin{equation}\label{eq:proof1}
    Z^{\{0,x\}}_{\Lambda,\beta}[0,x\overset{\mathcal M_n}{\centernot\longleftrightarrow} \mathbb H_n, \: y\xleftrightarrow[]{\:\mathcal M_n\:}\mathbb H_n]=\sum_{\substack{S\cap \mathbb H_n=\emptyset\\ S\ni 0,x\\S\not\ni y}}Z^{\{0,x\}}_{\Lambda,\beta}[\mathcal{C}_n(0)=S, \: y\xleftrightarrow[]{\:\mathcal M_n\:}\mathbb{H}_n ].
\end{equation}
Finally,
\begin{equation}\label{eq:proof2}
    Z^{\{0,x\}}_{\Lambda,\beta}[\mathcal{C}_n(0)=S, \: y\xleftrightarrow[]{\:\mathcal M_n\:} \mathbb{H}_n]= Z^{\{0,x\}}_{\Lambda,\beta}[\mathcal{C}_n(0)=S]\frac{Z_{\Lambda\setminus S,\beta}^\emptyset[y\xleftrightarrow[]{\:\mathcal M_n\:} \mathbb H_n]}{Z^\emptyset_{\Lambda\setminus S,\beta}}.
\end{equation}
Applying the switching principle for reflected currents (Lemma \ref{lem: switching for reflected currents}),
\begin{equation}\label{eq:proof3}
    Z_{\Lambda\setminus S,\beta}^\emptyset[y\xleftrightarrow[]{\:\mathcal M_n\:} \mathbb H_n]= Z^{\{y,\mathcal{R}_n(y)\}}_{\Lambda\setminus S,\beta}.
\end{equation}
Moreover, using one last time Lemma \ref{lem: switching for reflected currents},
\begin{equation}\label{eq:proof4}
    \sum_{\substack{S\cap \mathbb H_n=\emptyset\\
     \: S\ni 0,x}}Z^{\{0,x\}}_{\Lambda,\beta}[\mathcal{C}_n(0)=S]=Z^{\{0,x\}}_{\Lambda,\beta}[0,x\overset{\mathcal M_n}{\centernot\longleftrightarrow}\mathbb H_n]
     = Z^{\{0,x\}}_{\Lambda,\beta}-Z^{\{0,\mathcal{R}_n(x)\}}_{\Lambda,\beta}.
\end{equation}
Putting \eqref{eq:proof1}--\eqref{eq:proof4} in \eqref{eq:ha} and using Griffiths' inequality to replace $\Lambda\setminus S$ by $\Lambda$, we get
\begin{equation}
    \mathbf E^{\emptyset}_{\Lambda,\beta}\Big[\mathds{1}_{ 0,x\in \mathcal{S}_n^1,\: y \xleftrightarrow[]{\:\mathcal M_n\:} \mathbb H_n }\langle\sigma_0\sigma_x\rangle_{\mathcal{S}_n^1,\beta}\Big]
    \leq
   \Big(\langle \sigma_0\sigma_x\rangle_{\Lambda,\beta}-\langle \sigma_0\sigma_{\mathcal{R}_n(x)}\rangle_{\Lambda,\beta}\Big)\langle \sigma_y \sigma_{\mathcal{R}_n(y)}\rangle_{\Lambda,\beta}.
\end{equation}
We conclude by taking $\Lambda \nearrow \mathbb Z^d$.
\end{proof}

\begin{Rem}\label{rem: 1/2 to 1/4} Note that if one replaces $L(\beta)$ with $L'(\beta)$ defined by
\begin{equation}\label{eq:def L'}
	L'(\beta):=\inf\left\lbrace k\geq 1: \:\exists S\subset \mathbb Z^d,\: 0\in S, \: \textup{diam}(S)\leq 2k, \: \varphi_{\rho,\beta}(S)< 1/4\right\rbrace,
\end{equation}
then \eqref{eq: proof lower 1} holds with $\tfrac{1}{2}$ replaced by $\tfrac{1}{4}$. This means that, to the cost of changing $c_0$, we may repeat the above argument to obtain a version of Theorem \ref{prop: intermediate step lower bound} which holds with $L(\beta)$ replaced by $L'(\beta)$. This will be used in Section \ref{section: extension to all models in the gs class}.
\end{Rem}

\section{Proof of Theorem \ref{prop: intermediate step lower bound} for the \texorpdfstring{$\varphi^4$}{} model}
As explained in the introduction, the discrete $\varphi^4$ model can be recovered from taking the limit of well-chosen Ising models. In fact, with this procedure we may recover more models which we now describe.

\subsection{The Griffiths--Simon class of measures}\label{section: Gs class of measures}

\begin{Def}[The Griffiths--Simon class of measures]\label{def: gs class} A Borel measure $\rho$ on $\mathbb R$ falls into the \emph{Griffiths--Simon (GS) class} of measures if it satisfies one of the following conditions:
\begin{enumerate}
    \item[$(i)$] there exists an integer $N\geq 1$, a renormalisation constant $Z>0$, $(J_{i,j})_{1\leq i,j\leq N}\in (\mathbb R^+)^{N^2}$, and $(Q_n)_{1\leq n\leq N}\in (\mathbb R^+){N}$ such that for every $F:\mathbb R\rightarrow\mathbb R^+$ bounded and measurable,
    \begin{equation}
        \int_{\mathbb R} F(\tau)\textup{d}\rho(\tau)=\frac{1}{Z}\sum_{\sigma\in \lbrace \pm 1\rbrace^N}F\left(\sum_{n=1}^N Q_n\sigma_n\right)\exp\left(\sum_{i,j=1}^N J_{i,j}\sigma_i\sigma_j\right),
    \end{equation}
    \item[$(ii)$] the measure $\rho$ can be obtained as a weak limit of probability measures of the above type, and it is of sub-Gaussian growth: for some $\alpha>2$,
    \begin{equation}
        \int_{\mathbb R}e^{|\tau|^\alpha}\textup{d}\rho(\tau)<\infty.
    \end{equation}
\end{enumerate}
Measures that fulfill condition $(i)$ are described as being of the ``Ising type'', and those who satisfy $(ii)$ are called ``general''.
\end{Def}
The following result was proved in \cite{simon1973varphi} (see also \cite{krachun2023scaling}).
\begin{Prop}\label{prop: phi4 is GS} Let $g>0$ and $a\in \mathbb R$. The probability measure $\rho_{g,a}$ on $\mathbb R$ given by 
\begin{equation}
    \textup{d}\rho_{g,a}(\varphi)=\frac{1}{z_{g,a}}e^{-g\varphi^4-a\varphi^2}\textup{d}\varphi,
\end{equation}
where $z_{g,a}$ is a renormalisation constant, belongs to the GS class.
\end{Prop}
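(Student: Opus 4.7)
The plan is to exhibit $\rho_{g,a}$ as a weak limit of Ising-type measures in the sense of Definition~\ref{def: gs class}(i), thereby verifying condition~(ii). The sub-Gaussian tail bound required by~(ii) is immediate: for any $\alpha\in(2,4)$,
$$\int_\mathbb{R} e^{|\tau|^\alpha}\,\mathrm{d}\rho_{g,a}(\tau) = \frac{1}{z_{g,a}}\int_\mathbb{R} e^{|\tau|^\alpha - g\tau^4 - a\tau^2}\,\mathrm{d}\tau < \infty,$$
since $g\tau^4$ dominates $|\tau|^\alpha$ at infinity. The heart of the proof is therefore the construction of the approximating sequence.

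I would use the classical Curie--Weiss block-spin approximation. For each $N\geq 1$, take $N$ Ising spins $\sigma_1,\ldots,\sigma_N\in\{\pm 1\}$ with uniform mean-field couplings $J^{(N)}_{i,j}=\beta_N/(2N)$ and uniform block weights $Q^{(N)}_n=c_N$, producing a block spin $T_N:=c_N S_N$ (with $S_N:=\sum_n\sigma_n$) whose law $\rho_N$ is of Ising type in the sense of (i). Parameters are tuned near the Curie--Weiss critical point $\beta_c=1$ as
$$\beta_N:=1-\frac{a'}{\sqrt{N}},\qquad c_N:=\frac{\kappa}{N^{3/4}},$$
with $\kappa:=(12g)^{-1/4}$ and $a':=2a\kappa^2$. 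The goal is to show $\rho_N\Rightarrow \rho_{g,a}$ as $N\to\infty$.

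The core calculation combines the exact expression
$$\rho_N\bigl(T_N=c_N(2k-N)\bigr)=\frac{1}{Z_N}\binom{N}{k}\exp\!\left(\frac{\beta_N(2k-N)^2}{2N}\right)$$
with the Stirling expansion $\binom{N}{(N+Nm)/2}\sim\frac{2^N}{\sqrt{N\pi/2}}\,e^{-NI_0(m)}$, where $I_0(m)=\frac{m^2}{2}+\frac{m^4}{12}+O(m^6)$. Substituting $m=T/(\kappa N^{1/4})$ in the exponent and using the above scalings, the net action becomes
$$\frac{(\beta_N-1)Nm^2}{2}-\frac{Nm^4}{12}+O(Nm^6)=-aT^2-gT^4+o(1),$$
uniformly on compact sets in $T$, which yields pointwise convergence of the appropriately normalised profile of $\rho_N$ to the density $\tau\mapsto e^{-a\tau^2-g\tau^4}/z_{g,a}$.

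The main obstacle is upgrading this pointwise convergence to weak convergence of $\rho_N$ to $\rho_{g,a}$, which requires tightness of $(\rho_N)_{N\geq 1}$. The crucial feature of the critical tuning is that the quartic term $-Nm^4/12$ is not a correction but the \emph{dominant} term, giving a uniform tail bound of the form $\rho_N(|T|\geq R)\leq Ce^{-cR^4}$ for all $R$ and all $N$ large enough, once the parasitic Gaussian contribution from $\beta_N-1=O(N^{-1/2})$ is absorbed. With tightness established, Scheff\'e's lemma promotes the density convergence to total variation convergence, and hence to weak convergence. A full implementation of this scheme is carried out in \cite{simon1973varphi} (see also \cite{krachun2023scaling}).
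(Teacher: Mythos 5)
The paper offers no proof of this proposition --- it simply records that the result was established by Griffiths--Simon \cite{simon1973varphi} (see also \cite{krachun2023scaling}) --- and your proposal is a correct reconstruction of exactly that classical Curie--Weiss block-spin argument, with the right scaling ($Q^{(N)}\sim N^{-3/4}$, $\beta_N-1\sim N^{-1/2}$) and the exponent bookkeeping checking out ($\kappa^{-4}/12=g$, $a'/(2\kappa^2)=a$). The only place you gloss over a technicality is the appeal to Scheff\'e's lemma: $\rho_N$ is an atomic measure on a lattice of mesh $2c_N\to 0$, so it has no Lebesgue density, and one must first replace it by the piecewise-constant density obtained by spreading each atom over its lattice cell (your ``appropriately normalised profile'') before Scheff\'e applies, then observe that this smearing perturbs $\rho_N$ by $O(c_N)$ in the weak topology --- a standard step carried out in the cited references.
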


\subsection{Random current representation of Ising-type models in the GS class}

Fix $\rho$ in the GS class of the Ising-type, and $\beta>0$. The measure $\langle \cdot \rangle_{\Lambda,\rho,\beta}$ can be represented as an Ising measure\footnote{In this measure, the spins $\sigma_{(x,i)}$ and $\sigma_{(x,j)}$ interact through the coupling constant $J_{i,j}$, and for when $x\sim y$, the spins $\sigma_{(x,i)}$ and $\sigma_{(y,i)}$ interact through the coupling constant $\beta Q_iQ_j$.} on $\Lambda^{(N)}:=\Lambda\times K_N$ that we denote by $\langle \cdot \rangle_{\Lambda^{(N)},\rho,\beta}$. In that case, we identify $\tau_x$ with averages of the form
\begin{equation} 
    \sum_{i=1}^N Q_i\sigma_{(x,i)},
\end{equation}
where $Q_i\geq 0$ for $1\leq i \leq N$. For $x\in \mathbb Z^d$, we will denote $B_x:=\lbrace (x,i):\: 1\leq i \leq N\rbrace$.
This point of view allows us to use the random current representation. We introduce a measure $\mathsf{P}^{xy}_{\Lambda,\rho,\beta}$ on $\Omega_{\Lambda\times K_N}$ which we define in the following two steps procedure:
\begin{enumerate}
    \item[-] first, sample two integers $1\leq i,j\leq N$ with probability given by
    \begin{equation}
        \frac{Q_iQ_j\langle \sigma_{(x,i)}\sigma_{(y,j)}\rangle_{\Lambda^{(N)},\rho,\beta}}{\langle \tau_x\tau_y\rangle_{\Lambda,\rho,\beta}},
    \end{equation}
    \item[-] then, sample a current according to the ``standard'' current measure $\mathbf{P}^{\lbrace (x,i),(y,j)\rbrace}_{\Lambda^{(N)},\rho,\beta}$ introduced above.
\end{enumerate}
As before, it is possible to define the infinite volume version of the above measure. We will denote it $\mathsf P^{xy}_{\rho,\beta}$.
Samples of $\mathsf P^{xy}_{\rho,\beta}$ are random currents with \emph{random sources} in $B_x$ and $B_y$. The interest of this measure lies in the fact that it allows to derive bounds on connection probabilities that are formulated in terms of the correlation functions of the field variable $\tau$.
\subsection{Proof of the theorem for Ising-type models in the GS class}
We will prove the following result which is a small modification of Theorem \ref{prop: intermediate step lower bound}. In particular, notice the introduction of the variable $y'$. 

\begin{Prop}\label{prop: key inequality GS class} Let $d\geq 3$. There exist $c_2,N_2>0$ such that the following holds. For every $\rho$ of the Ising type in the GS class, for every $\beta\leq \beta_c(\rho)$, and every $N_2\leq n \leq L(\beta,\rho)$,
\begin{equation}
    c_2\leq \beta\sum_{\substack{x\in \Lambda_n\\y\sim x\\ y'\sim y}}\left(\langle \tau_0\tau_x\rangle_{\rho,\beta}-\langle\tau_0\tau_{\mathcal{R}_n(x)}\rangle_{\rho,\beta}\right)\langle \tau_{y}\tau_{\mathcal{R}_n(y')}\rangle_{\rho,\beta}.
\end{equation}
\end{Prop}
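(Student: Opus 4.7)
The plan is to execute the argument of Section 2 in the Ising representation of the GS model. Since an Ising-type $\rho$ yields an Ising measure on the lifted lattice $\Lambda^{(N)} := \Lambda \times K_N$, with internal couplings $J_{i,j}$ inside each block $B_x := \{x\} \times K_N$ and external couplings $\beta Q_i Q_j$ across spatial edges, both the random current representation and the switching principle for reflected currents (Lemma \ref{lem: switching for reflected currents}) apply directly on $\Lambda^{(N)}$, using the natural lifts of $\mathbb H_n$ and $\mathcal R_n$ that act only on the spatial coordinate.

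Define the block-safe random set $\mathcal S_n := \{x \in \mathbb Z^d : B_x \cap C_{\mathbb H_n}(\mathcal M_n) = \emptyset\}$, where $C_{\mathbb H_n}(\mathcal M_n)$ denotes the cluster of $\mathbb H_n \times K_N$ in the reflected multigraph $\mathcal M_n$ of the Ising representation. The analogue of Lemma \ref{lem :lemma 1} reads $\mathbf P^\emptyset_{\rho,\beta}[0 \in \mathcal S_n] \geq c > 0$: a union bound over $B_0$ (of finite size $N$), the switching principle for reflected currents, Griffiths' second inequality giving $Q_i^2 \langle \sigma_{(0,i)} \sigma_{(\mathcal R_n(0),i)}\rangle \leq \langle \tau_0 \tau_{\mathcal R_n(0)}\rangle$, and the GS infrared bound together reduce the probability of failure to an $O(n^{-(d-2)})$ quantity. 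Since $\mathcal S_n \subset \Lambda_{n-1}$, combining this with the definition of $L(\beta, \rho)$ (using the variant from Remark \ref{rem: 1/2 to 1/4} to absorb the multiplicative losses incurred below) yields, as in \eqref{eq: proof lower 1}--\eqref{eq: proof lower bound 3},
\[
\frac{c'}{\beta} \leq \mathbf E^\emptyset_{\rho, \beta}\Big[\mathds{1}_{0 \in \mathcal S_n^1} \sum_{\substack{x \in \mathcal S_n^1\\ y \sim x,\, y \in \Lambda_n}} \mathds{1}_{\exists j:\, (y,j) \xleftrightarrow{\mathcal M_n} \mathbb H_n^{(N)}} \langle \tau_0 \tau_x\rangle_{\mathcal S_n^1, \rho, \beta}\Big],
\]
where $\mathcal S_n^1$ is the one-direction analogue and $\mathbb H_n^{(N)} := \mathbb H_n \times K_N$.

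To conclude, I would run the argument of Lemma \ref{lem: lower proof 3} on the Ising representation: condition on the cluster $\mathcal C_n(B_0)$ of the entire block $B_0$ and apply the switching principle for reflected currents to a vertex $(y, j) \in B_y$ connected to $\mathbb H_n^{(N)}$. After averaging the starting block indices with $Q$-weights, the gradient piece becomes $\langle \tau_0 \tau_x\rangle - \langle \tau_0 \tau_{\mathcal R_n(x)}\rangle$, but the connection piece takes the block-diagonal form $\sum_j Q_j \langle \sigma_{(y,j)} \sigma_{(\mathcal R_n(y), j)}\rangle$, which is only a slice of $\langle \tau_y \tau_{\mathcal R_n(y)}\rangle$. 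To convert it into the form of the statement, I would apply the Simon--Lieb inequality with $S = \{(y,j)\}$, bounding each diagonal term by $\sum_{y' \sim y,\, j'} \beta Q_j Q_{j'} \langle \sigma_{(y', j')} \sigma_{(\mathcal R_n(y), j)}\rangle$ plus internal contributions; using the $\mathcal R_n$-symmetry $\langle \sigma_{(y', j')} \sigma_{(\mathcal R_n(y), j)}\rangle = \langle \sigma_{(y, j)} \sigma_{(\mathcal R_n(y'), j')}\rangle$ and summing against $Q_j Q_{j'}$, the external contributions collapse to $\sum_{y' \sim y} \beta \langle \tau_y \tau_{\mathcal R_n(y')}\rangle$, which is the precise origin of the additional variable $y'$ in the statement. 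The internal Simon--Lieb contributions (with $J_{j, j'}$ coefficients) give a bound by $\langle \tau_y \tau_{\mathcal R_n(y)}\rangle$, which I would reabsorb with one further Simon--Lieb step on the $\tau$-correlation. The main technical obstacle will be the careful bookkeeping of block indices through this combined switching--Simon--Lieb procedure and ensuring that the various multiplicative losses (from Griffiths, from Simon--Lieb, and from the block-wise definition of $\mathcal S_n$) remain absorbed in a single uniform constant $c_2$; the slack between $L(\beta, \rho)$ and $L'(\beta, \rho)$ afforded by Remark \ref{rem: 1/2 to 1/4} is what makes this absorption possible.
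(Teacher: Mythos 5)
Your overall architecture (lifted Ising representation on $\Lambda^{(N)}$, block-safe set $\mathcal S_n$, reflected switching, two-step argument mirroring Section~2) is the one the paper uses, and you correctly identify that the extra variable $y'$ must enter through a local decomposition near the block $B_y$. However, the precise mechanism you propose for converting block-level connection events into $\tau$-correlations has a genuine uniformity gap, in two closely related places.

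First, in your analogue of Lemma~\ref{lem :lemma 1}, you union bound over $(0,i)\in B_0$, apply switching to get $\mathbf P^\emptyset_{\rho,\beta}[0\notin \mathcal S_n(+\mathbf e_1)]\leq \sum_i \langle \sigma_{(0,i)}\sigma_{(\mathcal R_n(0),i)}\rangle$, and then invert Griffiths' inequality $Q_i^2\langle \sigma_{(0,i)}\sigma_{(\mathcal R_n(0),i)}\rangle\leq \langle \tau_0\tau_{\mathcal R_n(0)}\rangle$ term by term. This produces the factor $\sum_i Q_i^{-2}$, which has no uniform bound over the GS class; in the Griffiths--Simon approximation of $\varphi^4$ one has $Q_i\asymp N^{-1/2}$, so $\sum_i Q_i^{-2}\asymp N^2\to\infty$. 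The resulting estimate is therefore vacuous precisely in the limit needed to reach $\varphi^4$. Second, in your analogue of Lemma~\ref{lem: lower proof 3}, after switching you hold a diagonal object of the form $\sum_j\langle \sigma_{(y,j)}\sigma_{(\mathcal R_n(y),j)}\rangle$ (note: with no $Q$-weight at all, not the $\sum_j Q_j\langle\cdot\rangle$ you wrote) and try to Simon--Lieb it at $S=\{(y,j)\}$. The external contributions do recombine into $\beta\sum_{y'\sim y}\langle\tau_y\tau_{\mathcal R_n(y')}\rangle$, but the internal contributions $\sum_{j,j'}J_{j,j'}\langle \sigma_{(y,j')}\sigma_{(\mathcal R_n(y),j)}\rangle$ carry weights $J_{j,j'}$ rather than $Q_jQ_{j'}$ and are not comparable to $\langle\tau_y\tau_{\mathcal R_n(y)}\rangle$ uniformly over the GS class; iterating Simon--Lieb inside the block reproduces the same issue rather than absorbing it.

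The paper avoids both problems with Lemma~\ref{lem: switching principle reflected currents gs class}$(i)$: the inter-block edge from $B_x$ to $B_{x'}$ is peeled off \emph{in the sourceless partition function, before switching}, so the sources $\{(x,i),(x',j)\}$ and the weight $\beta Q_iQ_j$ (the coupling of the peeled edge) appear together. The sum $\sum_{x'\sim x}\sum_{i,j}\beta Q_iQ_j Z^{\{(x,i),(\mathcal R_n(x'),j)\}}_{\Lambda^{(N)},\rho,\beta}=\beta\sum_{x'\sim x}\mathsf Z^{\{x,\mathcal R_n(x')\}}_{\Lambda,\rho,\beta}$ is a $\tau$-correlation by construction, with no block union bound and no appeal to Griffiths to recover the $Q$-weights, so the constants are automatically uniform over the GS class. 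Reorganising your argument so that the local edge exposure is done at this stage, rather than via Simon--Lieb applied to a post-switching $\sigma$-correlation, is what is needed to close the gap.
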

It is easy to deduce Theorem \ref{prop: intermediate step lower bound} from Proposition \ref{prop: key inequality GS class} for measures of the Ising type in the GS class.

\begin{proof}[Proof of Theorem \textup{\ref{prop: intermediate step lower bound}} for measures of the Ising type in the GS class] Let $d\geq 3$ and let $\rho$ be a measure of the Ising type in the GS class. First, by the MMS inequalities, one has that 
\begin{equation}
\langle \tau_y\tau_{\mathcal R_n(y')}\rangle_{\rho,\beta}\leq \max(\langle \tau_y\tau_{\mathcal R_n(y)}\rangle_{\rho,\beta},\langle \tau_{y'}\tau_{\mathcal R_n(y')}\rangle_{\rho,\beta})\leq \langle \tau_y\tau_{\mathcal R_n(y)}\rangle_{\rho,\beta}+ \langle \tau_{y'}\tau_{\mathcal R_n(y')}\rangle_{\rho,\beta}
\end{equation}
 for $y\sim y'$. As a consequence, we get that
\begin{multline}
	\sum_{\substack{x\in \Lambda_n\\y\sim x\\ y'\sim y}}\left(\langle \tau_0\tau_x\rangle_{\rho,\beta}-\langle\tau_0\tau_{\mathcal{R}_n(x)}\rangle_{\rho,\beta}\right)\langle \tau_{y}\tau_{\mathcal{R}_n(y')}\rangle_{\rho,\beta}\\\leq 4d\sum_{\substack{x\in \Lambda_{n+1}\\y\sim x}}\left(\langle \tau_0\tau_x\rangle_{\rho,\beta}-\langle\tau_0\tau_{\mathcal{R}_n(x)}\rangle_{\rho,\beta}\right)\langle \tau_{y}\tau_{\mathcal{R}_n(y)}\rangle_{\rho,\beta}.
\end{multline}
The result follows readily from Proposition \ref{prop: key inequality GS class}.
\end{proof}

\begin{Rem}\label{rem: uniform bound in the GS class}
Note that Theorem \ref{prop: intermediate step lower bound} holds for values of $c_1,N_1$ which are uniform for every $\rho$ of the Ising type in the GS class. This observation will be useful later.
\end{Rem}

We begin with some notations: for a subset $\mathcal{E}$ of $\Omega_{\Lambda\times K_N}$, set
\begin{equation}
    \mathsf{Z}^\emptyset_{\Lambda,\rho,\beta}[\mathcal{E}]:=Z^\emptyset_{\Lambda_N,\rho,\beta}[\mathcal{E}]
  \text{ and }  \mathsf{Z}^{\{u,v\}}_{\Lambda,\rho,\beta}[\mathcal{E}]=\sum_{i,j=1}^NQ_iQ_jZ^{\{(u,i),(v,j)\}}_{\Lambda^{(N)},\rho,\beta}[\mathcal{E}].
\end{equation}
Note that
\begin{equation}
    \frac{\mathsf{Z}^{\{u,v\}}_{\Lambda,\rho,\beta}}{\mathsf{Z}^{\emptyset}_{\Lambda,\rho,\beta}}=\langle \tau_u\tau_v\rangle_{\Lambda,\rho,\beta},
\end{equation}
and
\begin{equation}
    \mathsf P_{\Lambda,\rho,\beta}^{\{u,v\}}[\mathcal{E}]=\sum_{i,j=1}^N\frac{Q_iQ_j\langle \sigma_{(u,i)}\sigma_{(v,j)}\rangle_{\Lambda^{(N)},\rho,\beta}}{\langle \tau_u\tau_v\rangle_{\Lambda,\rho,\beta}}\mathbf{P}^{\{(u,i),(v,j)\}}_{\Lambda^{(N)},\rho,\beta}[\mathcal{E}]=\frac{\mathsf{Z}^{\{u,v\}}_{\Lambda,\rho,\beta}[\mathcal{E}]}{\mathsf{Z}_{\Lambda,\rho,\beta}^{\{u,v\}}}.
\end{equation}
Moreover, for $G_N\subset \Lambda^{(N)}$ containing $B_u$ and $B_v$, we let
\begin{equation}
\mathsf{Z}^{\{u,v\}}_{G_N,\rho,\beta}:=\sum_{i,j=1}^N Q_iQ_j Z^{\{(u,i),(v,j)\}}_{G_N,\rho,\beta}.
\end{equation}
Recall that $\mathbb H_n^{(N)}=\mathbb H_n\times K_N$ and that for $\Lambda \subset \mathbb Z^d$, $\Lambda^-=\{x\in \Lambda:x_1< n\}$. We start by the switching principle in this context.
\begin{Lem}[Switching principle for reflected currents of Ising-type models in the GS class]\label{lem: switching principle reflected currents gs class} Let $\Lambda\subset \mathbb Z^d$ finite and symmetric with respect to $\mathcal{R}_n$. Let $x\in \Lambda^-$. Then,
\begin{itemize}
    \item[$(i)$] $\mathsf{Z}^{\emptyset}_{\Lambda,\rho,\beta}[B_x\xleftrightarrow[]{\:\mathcal M_n\:} \mathbb H_n^{(N)}]\leq \beta \sum_{x'\sim x}\mathsf{Z}^{\{x,\mathcal{R}_n(x')\}}_{\Lambda,\rho,\beta}$,
    \item[$(ii)$] $\mathsf{Z}^{\{0,x\}}_{\Lambda,\rho,\beta}[\sn\cap B_x\xleftrightarrow[]{\:\mathcal M_n\:} \mathbb H_n^{(N)}]=\mathsf{Z}^{\{0,\mathcal{R}_n(x)\}}_{\Lambda,\rho,\beta}$
\end{itemize}
\end{Lem}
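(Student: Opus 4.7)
The plan is to reduce both parts to Lemma~\ref{lem: switching for reflected currents} (the Ising switching principle for reflected currents) applied directly in the Ising model on the blown-up lattice $\Lambda^{(N)}$ where the Griffiths--Simon representation of $\langle\cdot\rangle_{\Lambda,\rho,\beta}$ lives.

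For $(ii)$, I expand the definition of $\mathsf Z^{\{0,x\}}$ to get
\begin{equation*}
\mathsf Z^{\{0,x\}}_{\Lambda,\rho,\beta}\bigl[\sn\cap B_x\xleftrightarrow[]{\:\mathcal M_n\:}\mathbb H_n^{(N)}\bigr]=\sum_{i,j=1}^N Q_iQ_j\, Z^{\{(0,i),(x,j)\}}_{\Lambda^{(N)},\rho,\beta}\bigl[(x,j)\xleftrightarrow[]{\:\mathcal M_n\:}\mathbb H_n^{(N)}\bigr].
\end{equation*}
Since $x$ is strictly left of $\mathbb H_n$, so is $(x,j)$, and Lemma~\ref{lem: switching for reflected currents} applied in the Ising model on $\Lambda^{(N)}$ switches this source with $(\mathcal R_n(x),j)$ while preserving the connection indicator. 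The indicator is then automatic by the same argument given just after Lemma~\ref{lem: switching for reflected currents} in Section~\ref{section: proof of the prop for the ising model}: once the source set $\{(0,i),(\mathcal R_n(x),j)\}$ straddles $\mathbb H_n^{(N)}$, any current contains a path between the two sources, and tracking its first crossing out of $\Lambda^+$ shows $(\mathcal R_n(x),j)\xleftrightarrow[]{\:\mathcal N_+\:}\mathbb H_n^{(N)}$; the reflection of this partial path lies in $\mathcal R_n(\mathcal N_+)\subset\mathcal M_n$, hence $(x,j)\xleftrightarrow[]{\:\mathcal M_n\:}\mathbb H_n^{(N)}$. Summing the $Q_iQ_j$-weighted terms produces $\mathsf Z^{\{0,\mathcal R_n(x)\}}_{\Lambda,\rho,\beta}$.

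For $(i)$, since $B_x\cap\mathbb H_n^{(N)}=\emptyset$, the event $B_x\xleftrightarrow[]{\:\mathcal M_n\:}\mathbb H_n^{(N)}$ forces the existence of some inter-column edge $e=\{(x,i),(x',j)\}$ with $x'\sim x$, $x'\neq x$, satisfying $\n_e\geq 1$ and $(x',j)\xleftrightarrow[]{\:\mathcal M_n\:}\mathbb H_n^{(N)}$. A union bound over such $e$, followed by the elementary weight-removal inequality $w_\beta(\n)\leq J_e\,w_\beta(\n-\delta_e)$ valid on $\{\n_e\geq 1\}$ (with $J_e=\beta Q_iQ_j$), converts the empty-source sum into partition sums with sources at the two endpoints of $e$, both of which lie strictly left of $\mathbb H_n^{(N)}$ (the boundary case $x'\in\mathbb H_n$ being trivial since then $\mathcal R_n(x')=x'$). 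A final application of Lemma~\ref{lem: switching for reflected currents} at $(x',j)$ switches this source to $(\mathcal R_n(x'),j)$; summing over $i$, $j$ and $x'\sim x$ reproduces the right-hand side.

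The main technical point will be tracking the connection event through the weight-removal in $(i)$: removing $\delta_e$ replaces $\mathcal M_n(\n)$ by $\mathcal M_n(\n')+\delta_e$ with $\n'=\n-\delta_e$, and adding back a single edge can only create new connections. Consequently the image event that $(x',j)$ connects to $\mathbb H_n^{(N)}$ in $\mathcal M_n(\n')+\delta_e$ either already holds in $\mathcal M_n(\n')$ directly or is realized through the added bridge $e$, in which case $(x,i)\xleftrightarrow[]{\:\mathcal M_n(\n')\:}\mathbb H_n^{(N)}$. By Lemma~\ref{lem: switching for reflected currents} and the reflection symmetry of $\Lambda$ under $\mathcal R_n$, both alternatives lead to the same partition function $Z^{\{(x,i),(\mathcal R_n(x'),j)\}}_{\Lambda^{(N)}}$, so only a harmless absolute constant is absorbed in the final bound.
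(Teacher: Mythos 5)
Your proof of $(ii)$ is essentially the paper's approach: expand $\mathsf Z^{\{0,x\}}$ into the $(Q_iQ_j)$-weighted Ising partition functions on $\Lambda^{(N)}$, apply the generalisation of Lemma~\ref{lem: switching for reflected currents} (the footnote in the paper's $(i)$ confirms this generalisation is what is used) to move the source from $(x,j)$ to $(\mathcal R_n(x),j)$, and observe the connection indicator is then automatic because the sources $\{(0,i),(\mathcal R_n(x),j)\}$ straddle $\mathbb H_n^{(N)}$. The paper indeed says $(ii)$ ``reproduces the argument of Lemma~\ref{lem: switching for reflected currents}'', and your version is a clean and correct instantiation of that.

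In $(i)$, however, your opening decomposition contains a real gap. You claim that $B_x\xleftrightarrow[]{\:\mathcal M_n\:}\mathbb H_n^{(N)}$ forces $\n_e\ge1$ for some inter-column edge $e=\{(x,i),(x',j)\}$. This is false: $\mathcal M_n$ is built from $\mathcal N_-$ together with the \emph{reflection} $\mathcal R_n(\mathcal N_+)$, so the connection from $B_x$ to $\mathbb H_n^{(N)}$ can be realised entirely through reflected edges (for instance if $\n$ contains a loop in $E_+(\mathbb H_n)$ passing through $B_{\mathcal R_n(x)}$ and $\mathbb H_n^{(N)}$, with $\n$ carrying no edges at all in $E_-$ near $B_x$). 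In that case the exit edge at $B_x$ inside $\mathcal M_n$ corresponds to $\n_{\tilde e}\ge1$ for the reflected edge $\tilde e$, while $\n_e=0$, and your weight-removal step has nothing to act on. The paper avoids this by citing \cite[Lemma~A.7]{ADC}, where the edge-removal is organised so as to track the current $\n$ (not $\mathcal M_n$) starting from the doubled-source partition function $Z^{\{(x,i),(\mathcal R_n(x),i)\}}$ obtained from the switching lemma. Your gap is patchable --- partition according to whether $\n_e\ge1$ or ($\n_e=0$ and $\n_{\tilde e}\ge1$) and use the $\mathcal R_n$-invariance of $\Lambda$ in the second case --- but as written the argument breaks on a genuine class of configurations. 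Separately, note that the stated lemma has the constant exactly $\beta$, with no slack for the ``harmless absolute constant'' you absorb at the end from your post-removal case analysis; for the downstream application to Proposition~\ref{prop: key inequality GS class} this is indeed harmless, but it does mean your argument proves a weaker inequality than the one stated.
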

\begin{proof} For $(i)$, we follow the strategy employed in \cite[Lemma~A.7]{ADC}. Write,
\begin{align*}
    \mathsf Z^{\emptyset}_{\Lambda,\rho,\beta}[B_x\xleftrightarrow[]{\:\mathcal M_n\:} \mathbb H_n^{(N)}]&\leq\sum_{i=1}^N Z^{\emptyset}_{\Lambda^{(N)},\rho,\beta}[(x,i)\xleftrightarrow[]{\:\mathcal M_n\:} \mathbb H_n^{(N)}]
    \\
    &\leq
    \sum_{x'\sim x}\sum_{i,j=1}^N (\beta Q_iQ_j) Z^{\{(x,i),(x',j)\}}_{\Lambda^{(N)},\rho,\beta}[(x',j)\xleftrightarrow[]{\:\mathcal M_n\:} \mathbb H_n^{(N)}]
    \\
    &\leq
    \sum_{x'\sim x}\sum_{i,j=1}^N  (\beta Q_iQ_j) Z^{\{(x,i),(\mathcal{R}_n(x'),j)\}}_{\Lambda^{(N)},\rho,\beta},
\end{align*}
where on the second line we used the fact that any path going out of $B_x$ has to visit $B_{x'}$ for some $x'\sim x$, and on the third line we used\footnote{For full disclosure, we actually used a straightforward generalisation of Lemma \ref{lem: switching for reflected currents} to graphs of the form $\Lambda\times K_N$ that are symmetric with respect to $\mathcal{R}_n$, in the sense that for all $x\in \Lambda$ and all $1\leq i \leq N$, $(x,i)\in \Lambda^{(N)}$ if and only if $\mathcal{R}_n((x,i)):=(\mathcal{R}_n(x),i)\in \Lambda^{(N)}$.} Lemma \ref{lem: switching for reflected currents}. 

For $(ii)$, we essentially reproduce the argument of Lemma~\ref{lem: switching for reflected currents} and omit the proof. 
\end{proof}
With this result, we obtain Proposition \ref{prop: key inequality GS class} by copying the argument used above. We will modify the notations above to make them adapted to our setup. We now define,
\begin{equation}
    \mathcal{S}_n(+\mathbf{e}_1):=\Big\lbrace x\in \Lambda_n: \: B_x\overset{\mathcal M_n(+\mathbf e_1)}{\centernot\longleftrightarrow} \mathbb H_n(+\mathbf{e}_1)\times K_N\Big\rbrace,
\end{equation}
and modify accordingly the definitions of $\mathcal{S}_n$ and $\mathcal{S}_n^1$.
\begin{proof}[Proof of Proposition \textup{\ref{prop: key inequality GS class}}]
With the help of the previous lemma, the proof is basically the same as for the Ising case. The only modification lies in the statement and in the proof of the adaptation of Lemma~\ref{lem: lower proof 3}, which reads as follows.
\end{proof}
\begin{Lem}\label{lem: lower proof GS final} For $\beta\leq \beta_c(\rho)$,
\begin{multline}
    \mathsf E^\emptyset_{\rho,\beta}\Big[\mathds{1}_{0\in \mathcal{S}_n^1}\sum_{\substack{x\in \mathcal{S}_n^1\\y\sim x, \:y\in \Lambda_n}}\mathds{1}_{B_y\xleftrightarrow[]{\:\mathcal M_n\:} \mathbb H_n^{(N)}}\langle \tau_0\tau_x\rangle_{\mathcal{S}_n^1,\rho,\beta}\Big]\\\leq \beta \sum_{\substack{x,y\in \Lambda_n\\y\sim x\\ y'\sim y}}\left(\langle \tau_0\tau_x\rangle_{\rho,\beta}-\langle \tau_0\tau_{\mathcal{R}_n(x)}\rangle_{\rho,\beta}\right)\langle \tau_y \tau_{\mathcal{R}_n(y')}\rangle_{\rho,\beta}.
\end{multline}
\end{Lem}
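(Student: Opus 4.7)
The plan is to mirror the proof of Lemma \ref{lem: lower proof 3} on the two-layer graph $\Lambda^{(N)}=\Lambda\times K_N$, replacing each vertex $u$ by its fibre $B_u$ and the scalar partition functions $Z^{\{u,v\}}$ by the $Q$-weighted ones $\mathsf Z^{\{u,v\}}$. Fix $\Lambda\subset\mathbb Z^d$ finite, symmetric under $\mathcal R_n$ and containing $\Lambda_{4n}$, set $\Lambda^-=\{x\in\Lambda:x_1<n\}$, and let $\mathcal G_n$ be the symmetrisation under $\mathcal R_n$ of $\{x\in\Lambda^-:B_x\overset{\mathcal M_n}{\centernot\longleftrightarrow}\mathbb H_n^{(N)}\}$. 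By construction $\mathcal G_n\cap\Lambda_{n-1}=\mathcal S_n^1$, and since $\mathcal G_n$ is a subset of $\mathbb Z^d$, its fibered lift $\mathcal G_n\times K_N$ cleanly separates $\Lambda^{(N)}$ into two edge-disjoint pieces on which the current factorises as in \eqref{eq:g_n argument 1}--\eqref{eq:g_n argument 2}.

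Reproducing \eqref{eq: final lemma 1}--\eqref{eq:ha} in this setting, and using Griffiths' inequality to upgrade $\langle\tau_0\tau_x\rangle_{\mathcal S_n^1}$ to $\langle\tau_0\tau_x\rangle_{\mathcal G_n}$, rewrites each summand $\mathsf E^\emptyset_{\Lambda,\rho,\beta}\bigl[\mathds 1_{0,x\in\mathcal S_n^1,\,B_y\overset{\mathcal M_n}{\longleftrightarrow}\mathbb H_n^{(N)}}\langle\tau_0\tau_x\rangle_{\mathcal S_n^1,\rho,\beta}\bigr]$ as
\begin{equation*}
\frac{\mathsf Z^{\{0,x\}}_{\Lambda,\rho,\beta}\bigl[0,x\overset{\mathcal M_n}{\centernot\longleftrightarrow}\mathbb H_n^{(N)},\,B_y\overset{\mathcal M_n}{\longleftrightarrow}\mathbb H_n^{(N)}\bigr]}{\mathsf Z^\emptyset_{\Lambda,\rho,\beta}}.
\end{equation*}
One then conditions on the cluster $\mathcal C_n(0)$ of $B_0$ in $\mathcal M_n$ together with its reflection, as in \eqref{eq:proof1}--\eqref{eq:proof2}, splitting the expression into a cluster piece and a sourceless exterior piece.

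The two switching applications are the GS-versions of Lemma \ref{lem: switching principle reflected currents gs class}. On the cluster piece, part $(ii)$ applied to $\mathsf Z^{\{0,x\}}_{\Lambda,\rho,\beta}[0,x\overset{\mathcal M_n}{\centernot\longleftrightarrow}\mathbb H_n^{(N)}]$ yields $\mathsf Z^{\{0,x\}}_{\Lambda,\rho,\beta}-\mathsf Z^{\{0,\mathcal R_n(x)\}}_{\Lambda,\rho,\beta}$, producing the factor $\langle\tau_0\tau_x\rangle-\langle\tau_0\tau_{\mathcal R_n(x)}\rangle$. On the exterior, part $(i)$ gives only an inequality,
\begin{equation*}
\mathsf Z^\emptyset_{\Lambda\setminus S,\rho,\beta}\bigl[B_y\overset{\mathcal M_n}{\longleftrightarrow}\mathbb H_n^{(N)}\bigr]\leq \beta\sum_{y'\sim y}\mathsf Z^{\{y,\mathcal R_n(y')\}}_{\Lambda\setminus S,\rho,\beta}.
\end{equation*}
Dividing by $\mathsf Z^\emptyset_{\Lambda\setminus S,\rho,\beta}$, using Griffiths' inequality to replace $\Lambda\setminus S$ by $\Lambda$, and letting $\Lambda\nearrow\mathbb Z^d$ generates the factor $\beta\sum_{y'\sim y}\langle\tau_y\tau_{\mathcal R_n(y')}\rangle_{\rho,\beta}$. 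This is precisely where the extra neighbour $y'$ in the statement arises: because any $\mathcal M_n$-path leaving the fibre $B_y$ must first traverse some $B_{y'}$ with $y'\sim y$, the initial edge is reflected when the switching is applied, introducing one unit of lattice slack.

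The main obstacle is bookkeeping the $Q_iQ_j$ weights in the factorisation across $\mathcal C_n(0)^{(N)}$ and its complement, and checking that the sum over the random endpoints $(0,i),(x,j)$ commutes with the conditioning on $\mathcal C_n(0)$. This is essentially absorbed by the very definition of $\mathsf Z^{\{u,v\}}$ as a $Q_iQ_j$-weighted sum of ordinary Ising partition functions on $\Lambda^{(N)}$, so once both items of Lemma \ref{lem: switching principle reflected currents gs class} are in hand the Ising argument transfers verbatim, producing the factor of $\beta$ that distinguishes this statement from Theorem \ref{prop: intermediate step lower bound}.
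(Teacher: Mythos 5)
Your outline captures the right skeleton --- the same conditioning/switching architecture as Lemma~\ref{lem: lower proof 3}, and the same two applications of Lemma~\ref{lem: switching principle reflected currents gs class} producing the factors $\langle\tau_0\tau_x\rangle-\langle\tau_0\tau_{\mathcal R_n(x)}\rangle$ and $\beta\sum_{y'\sim y}\langle\tau_y\tau_{\mathcal R_n(y')}\rangle$ --- but the step where you factorise across $\mathcal G_n$ does not go through as written, because of how you defined $\mathcal G_n$. You take $\mathcal G_n$ to be a subset of $\mathbb Z^d$ (those $x$ for which the \emph{entire} fibre $B_x$ is disconnected from $\mathbb H_n^{(N)}$) and then lift it to the product set $\mathcal G_n\times K_N$, asserting that this ``cleanly separates'' $\Lambda^{(N)}$. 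For the analogue of \eqref{eq:g_n argument 1}--\eqref{eq:g_n argument 2} one needs that on the event $\{\mathcal G_n=S\}$ the current vanishes on every edge joining $S\times K_N$ to its complement, and this is false for your coarse set: suppose $x\in S$, $x'\notin S$, and the current on the edge $\{(x,i),(x',j)\}$ is positive while $(x',j)\overset{\mathcal M_n}{\centernot\longleftrightarrow}\mathbb H_n^{(N)}$ even though $(x',k)\xleftrightarrow[]{\:\mathcal M_n\:}\mathbb H_n^{(N)}$ for some other $k$. Then $B_x$ is still entirely disconnected from $\mathbb H_n^{(N)}$ (so $x\in\mathcal G_n$) while $x'\notin\mathcal G_n$, yet a boundary edge of $S\times K_N$ carries positive current; the law does not split into an inside and an outside piece. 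The paper avoids this by taking $\mathcal G_n$ to be a subset of $\Lambda^{(N)}$ itself, namely the symmetrisation of $\{(x,i)\in\Lambda^{(N)}:(x,i)\overset{\mathcal M_n}{\centernot\longleftrightarrow}\mathbb H_n^{(N)}\}$; this set \emph{is} a union of $\mathcal M_n$-components not meeting $\mathbb H_n^{(N)}$, so its edge boundary automatically carries zero current and the factorisation holds. The correlation $\langle\tau_0\tau_x\rangle_{\mathcal G_n,\rho,\beta}$ must then be read on that finer vertex set, and the sum in \eqref{eq: final lemma 1 GS} runs over $S\subset\Lambda^{(N)}$ with $S\supset B_0,B_x$ and $S\not\supset B_y$, not over subsets of $\mathbb Z^d$.

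A second, more minor point: you condition on the cluster $\mathcal C_n(0)$ of $B_0$ alone, whereas the paper conditions on $\mathcal C_n(0,x)$, the union of the $\mathcal M_n$-clusters of all points of $B_0\cup B_x$ (together with its reflection). For a GS measure the random sources sit at a single pair $(0,i),(x,j)$, and the other points of $B_0\cup B_x$ need not lie in the cluster of $(0,i)$; on the event $B_0,B_x\overset{\mathcal M_n}{\centernot\longleftrightarrow}\mathbb H_n^{(N)}$ it is $\mathcal C_n(0,x)$ that contains $B_0\cup B_x$ and that one resums over, and conditioning only on the cluster of $B_0$ would omit configurations. Once these two adjustments are made, your remaining steps --- Griffiths to pass to the larger set, Lemma~\ref{lem: switching principle reflected currents gs class}$(ii)$ on the cluster piece, Lemma~\ref{lem: switching principle reflected currents gs class}$(i)$ on the exterior to create the $\beta\sum_{y'\sim y}$ factor, then $\Lambda\nearrow\mathbb Z^d$ --- match the paper.
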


\begin{proof} Fix $\Lambda\subset \mathbb Z^d$ finite, symmetric under $\mathcal{R}_n$, which contains $\Lambda_{4n}$. Observe that 
\begin{equation}
    \mathsf E^\emptyset_{\Lambda,\rho,\beta}\Big[\mathds{1}_{0\in \mathcal{S}_n^1}\sum_{\substack{x\in \mathcal{S}_n^1\\y\sim x, \:y\in \Lambda_n}}\mathds{1}_{B_y\xleftrightarrow[]{\:\mathcal M_n\:} \mathbb H_n^{(N)}}\langle \tau_0\tau_x\rangle_{\mathcal{S}_n^1,\rho,\beta}\Big]
    =
    \sum_{\substack{x,y\in \Lambda_n\\y\sim x}}\mathsf E^{\emptyset}_{\Lambda,\beta}\Big[\mathds{1}_{ 0,x\in \mathcal{S}_n^1,\: B_y \xleftrightarrow[]{\:\mathcal M_n\:} \mathbb H_n^{(N)}}\langle\tau_0\tau_x\rangle_{\mathcal{S}_n^1,\beta}\Big].
\end{equation}
For $x \in \Lambda_{n}$ and $y\sim x$ with $y\in \Lambda_n$, write 
\begin{align}
    \mathsf E^{\emptyset}_{\Lambda,\beta}\Big[\mathds{1}_{ 0,x\in \mathcal{S}_n^1,\: B_y \xleftrightarrow[]{\:\mathcal M_n\:} \mathbb H_n^{(N)}}\langle\tau_0\tau_x\rangle_{\mathcal{S}_n^1,\rho,\beta}\Big]
    &\leq \mathsf E^{\emptyset}_{\Lambda,\beta}\Big[\mathds{1}_{ B_0,B_x\subset \mathcal{G}_n,\: B_y \xleftrightarrow[]{\:\mathcal M_n\:} \mathbb H_n^{(N)}}\langle\tau_0\tau_x\rangle_{\mathcal{G}_n,\beta}\Big]
    \notag\\ \label{eq: final lemma 1 GS} &=
   \sum_{\substack{S\subset \Lambda^{(N)}\\S\supset B_0,B_x\\ S\not\supset B_y}}\mathsf E^{\emptyset}_{\Lambda,\rho,\beta}\left[\mathds 1_{\mathcal{G}_n=S
   } \frac{\mathsf Z^{\{0,x\}}_{S,\rho,\beta}}{\mathsf Z^\emptyset_{S,\rho,\beta}}\right],
\end{align} 
where
$\mathcal{G}_n$ is the union of $ \big\lbrace (x,i)\in \Lambda^{(N)}: \: (x,i) \overset{\mathcal M_n}{\centernot\longleftrightarrow} \mathbb{H}_n^{(N)}\big\rbrace$ and its image with respect to $\mathcal R_n$.
Reasoning as in \eqref{eq:g_n argument 1} and \eqref{eq:g_n argument 2}, we obtain
\begin{equation}
    \mathsf E^{\emptyset}_{\Lambda,\rho,\beta}\left[\mathds 1_{\mathcal{G}_n=S
   } \frac{\mathsf Z^{\{0,x\}}_{S,\rho,\beta}}{\mathsf Z^\emptyset_{S,\rho,\beta}}\right]
   =
   \frac{\mathsf Z^{\{0,x\}}_{\Lambda,\rho,\beta}[\mathcal{G}_n=S]}{\mathsf Z^\emptyset_{\Lambda,\rho,\beta}}.
\end{equation}
The right-hand side of \eqref{eq: final lemma 1 GS} rewrites
\begin{equation}
    \sum_{\substack{S\subset \Lambda\times K_N\\S\supset B_0,B_x\\ S\not\supset B_y}}\frac{\mathsf Z^{\{0,x\}}_{\Lambda,\rho,\beta}[\mathcal{G}_n=S]}{\mathsf Z^\emptyset_{\Lambda,\rho,\beta}}=\frac{\mathsf Z^{\{0,x\}}_{\Lambda,\rho,\beta}[B_0,B_x\overset{\mathcal M_n}{\centernot\longleftrightarrow} \mathbb H_n^{(N)}, \: B_y\xleftrightarrow[]{\:\mathcal M_n\:} \mathbb H_n^{(N)}]}{\mathsf Z^\emptyset_{\Lambda,\rho,\beta}}.
\end{equation}
Now, we analyse $\mathsf Z^{\{0,x\}}_{\Lambda,\rho,\beta}[B_0,B_x\overset{\mathcal M_n}{\centernot\longleftrightarrow} \mathbb H_n^{(N)}, \: B_y\xleftrightarrow[]{\:\mathcal M_n\:} \mathbb H_n^{(N)}]$ by conditioning on the cluster of $B_0\cup B_x$ in $\mathcal{M}_n$ that we denote by $\mathcal{C}_n(0,x)$. Write,
\begin{multline}
    \mathsf Z^{\{0,x\}}_{\Lambda,\rho,\beta}[B_0,B_x\mathrel{\mathop{\centernot\longleftrightarrow}^{\mathcal M_n}} \mathbb H_n^{(N)}, \: B_y\xleftrightarrow[]{\:\mathcal M_n\:} \mathbb H_n^{(N)}]\\=\sum_{\substack{S\cap \mathbb H_n^{(N)}=\emptyset\\S\supset B_0,B_x}}\mathsf Z^{\{0,x\}}_{\Lambda,\rho,\beta}[\mathcal{C}_n(0,x)=S, \: B_y\xleftrightarrow[]{\:\mathcal M_n\:} \mathbb{H}_n^{(N)}].
\end{multline}
Finally,
\begin{equation}
    \mathsf Z^{\{0,x\}}_{\Lambda,\rho,\beta}[\mathcal{C}_n(0,x)=S, \: B_y\xleftrightarrow[]{\:\mathcal M_n\:} \mathbb{H}_n^{(N)}]= \mathsf Z^{\{0,x\}}_{\Lambda,\rho,\beta}[\mathcal{C}_n(0,x)=S]\frac{\mathsf Z_{\Lambda\setminus S,\rho, \beta}^\emptyset[B_y\xleftrightarrow[]{\:\mathcal M_n\:} \mathbb H_n^{(N)}]}{\mathsf Z^\emptyset_{\Lambda\setminus S,\rho,\beta}}.
\end{equation}
Applying Lemma \ref{lem: switching principle reflected currents gs class} gives
\begin{equation}
    \mathsf Z_{\Lambda\setminus S,\rho, \beta}^\emptyset[B_y\xleftrightarrow[]{\:\mathcal M_n\:}\mathbb H_n^{(N)}]\leq \mathcal \beta\sum_{y'\sim y} \mathsf Z^{\{y,\mathcal{R}_n(y')\}}_{\Lambda\setminus S,\beta}.
\end{equation}
Moreover, using Lemma \ref{lem: switching principle reflected currents gs class} one more time on the last line,
\begin{align*}
    \sum_{\substack{S\cap \mathbb H_n^{(N)}=\emptyset\\ S\supset B_0,B_x}}\mathsf Z^{\{0,x\}}_{\Lambda,\rho,\beta}[\mathcal{C}_n(0)=S]&=\mathsf Z^{\{0,x\}}_{\Lambda,\rho,\beta}[B_0,B_x\overset{\mathcal M_n}{\centernot\longleftrightarrow}\mathbb H_n^{(N)}]
    \\&\leq \mathsf Z^{\{0,x\}}_{\Lambda,\rho,\beta}-\mathsf Z^{\{0,x\}}_{\Lambda,\rho,\beta}[\sn \cap B_x\xleftrightarrow[]{\:\mathcal M_n\:}\mathbb H_n^{(N)}]
    \\
    &=\mathsf Z^{\{0,x\}}_{\Lambda,\rho,\beta}-\mathsf Z^{\{0,\mathcal{R}_n(x)\}}_{\Lambda,\rho,\beta}.
\end{align*}
Collecting the above work and taking the limit as $\Lambda\nearrow\mathbb Z^d$, we get the result.
\end{proof}

\subsection{Proof of the theorem for general models in the GS class}\label{section: extension to all models in the gs class}
We now turn to the extension of Theorem \ref{prop: intermediate step lower bound} to all models in the GS class (and in particular to the $\varphi^4$ model). It suffices to show that the inequality is stable under weak limits $\rho_k\rightarrow \rho$ for $(\rho_k)_{k\geq 1}$ a sequence of Ising-type measures in the GS class. Similar arguments had to be used in \cite{ADC,Pan23+} and we will essentially import the tools developed there. The extension of the result  follows readily from this proposition. We define $L'(\rho,\beta)$ as in \eqref{eq:def L'}.
\begin{Prop}[\hspace{1pt}{\cite[Proposition~8.7]{Pan23+}}]\label{prop: input limit phi4} Let $d\geq 3$. Let $\rho$ be a measure in the GS class. Let $(\rho_k)_{k\geq 1}$ be a sequence of measures of the Ising type in the GS class that converges weakly to $\rho$. Then,
\begin{enumerate}
    \item[$(i)$] $\liminf \beta_c(\rho_k)\geq \beta_c(\rho)$,
    \item[$(ii)$] for every $\beta>0$, $\liminf L'(\rho_k,\beta)\geq L(\rho,\beta)$,
    \item[$(iii)$] for every $\beta< \beta_c(\rho)$, for every $x,y\in \mathbb Z^d$,
    \begin{equation}
        \lim_{k\rightarrow \infty}\langle \tau_x\tau_y\rangle_{\rho_k,\beta}=\langle \tau_x\tau_y\rangle_{\rho,\beta}.
    \end{equation}
\end{enumerate}
\end{Prop}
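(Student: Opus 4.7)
The plan is to reduce all three assertions to a single finite-volume convergence lemma, and to deduce $(ii)$, then $(i)$, and finally $(iii)$ in that order. The lemma I would establish first is that for any finite $S \subset \mathbb Z^d$, any $\beta > 0$, and any $x, y \in S$,
\[
\langle \tau_x \tau_y\rangle_{S, \rho_k, \beta} \xrightarrow[k\to\infty]{} \langle \tau_x \tau_y\rangle_{S, \rho, \beta}.
\]
This is the analytic core: the finite-volume correlation is the ratio of two integrals of a continuous, polynomially-controlled function of the spins against the product measure $\rho_k^{\otimes S}$. On the bounded event $\{|\tau_z| \leq M \text{ for all } z \in S\}$ weak convergence of $\rho_k$ gives convergence of numerator and denominator, and the uniform sub-Gaussian control inherent to the GS class (the hypothesis $\int e^{|\tau|^\alpha}\,d\rho < \infty$ propagates along the sequence through the Ising-type construction) handles the tail contribution uniformly in $k$. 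An immediate corollary is that $\varphi_{\rho_k, \beta}(S) \to \varphi_{\rho, \beta}(S)$ for every finite $S$, since this quantity is a finite sum of finite-volume correlations.

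For $(ii)$, fix $n < L(\rho, \beta)$. Every finite $S \ni 0$ with $\mathrm{diam}(S) \leq 2n$ satisfies $\varphi_{\rho, \beta}(S) \geq 1/2$, and such $S$ form a finite collection since $S \subset \Lambda_{2n}$. The lemma gives $\varphi_{\rho_k, \beta}(S) \geq 1/4$ uniformly over this collection for $k$ large, so $L'(\rho_k, \beta) > n$, whence $\liminf L'(\rho_k, \beta) \geq L(\rho, \beta)$ after letting $n \nearrow L(\rho, \beta)$. For $(i)$, fix $\beta < \beta_c(\rho)$: then $L(\rho, \beta) < \infty$ and there exists a finite $S$ with $\varphi_{\rho, \beta}(S) < 1/2$, so $\varphi_{\rho_k, \beta}(S) < 1/2$ eventually, and iterating the Simon--Lieb inequality yields $\chi(\rho_k, \beta) < \infty$, i.e., $\beta \leq \beta_c(\rho_k)$. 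Letting $\beta \nearrow \beta_c(\rho)$ proves $(i)$.

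For $(iii)$, the lower bound $\liminf_k \langle \tau_x \tau_y\rangle_{\rho_k, \beta} \geq \langle \tau_x \tau_y\rangle_{\rho, \beta}$ follows from Griffiths' monotonicity in the volume $\Lambda$ applied to $\langle \tau_x \tau_y\rangle_{\Lambda, \rho_k, \beta}$, together with the finite-volume lemma and $\Lambda \nearrow \mathbb Z^d$. The matching upper bound requires exponential decay uniform in $k$: picking $S$ with $\varphi_{\rho_k, \beta}(S) < 1/2$ for $k$ large (from the proof of $(i)$) and iterating Simon--Lieb produces constants $c, C > 0$ independent of $k$ such that $\langle \tau_0 \tau_x\rangle_{\rho_k, \beta} \leq C e^{-c|x|}$; this also requires a uniform a priori bound on the two-point function at a fixed scale, again coming from the sub-Gaussian control. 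Such uniform decay lets one replace the infinite lattice by a large finite $\Lambda$ with a remainder vanishing uniformly in $k$, and the lemma closes the argument. The main obstacle is precisely securing this uniform sub-Gaussian/moment control across the entire sequence $(\rho_k)$: it underpins both the finite-volume convergence lemma and the uniform Simon--Lieb iteration, and is where the structural specifics of the GS class (compact support of the Ising-type approximants, and the sub-Gaussian condition for the general limit) have to be used rather than abstract weak convergence alone.
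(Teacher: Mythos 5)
The paper does not itself prove this proposition: it imports it verbatim from \cite[Proposition~8.7]{Pan23+}, and the only original content here is the remark immediately following Theorem~\ref{prop: intermediate step lower bound}'s proof for general GS measures, which explains why the Pan23+ argument for part $(iii)$ only covers $d\geq 4$ and how to patch it for $d=3$ by replacing the infrared bound with a uniform-in-$k$ exponential decay estimate. So there is no in-paper proof to compare against directly; your attempt should be read as a candidate reconstruction of a proof that lives elsewhere.

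Your skeleton (a single finite-volume convergence lemma, from which $(i)$, $(ii)$, and the lower half of $(iii)$ follow by soft arguments) is sound, and your treatments of $(i)$ and $(ii)$ are correct modulo that lemma. The two places where the argument is not yet a proof are exactly where Pan23+ does real work. First, the finite-volume lemma needs uniform integrability of $\exp(\beta\sum\tau_x\tau_y)$ against $\rho_k^{\otimes S}$; you gesture at ``uniform sub-Gaussian control propagating through the Ising-type construction'', but this is not automatic from weak convergence plus the sub-Gaussian hypothesis on the \emph{limit} $\rho$ --- the supports of the Ising-type approximants grow with $k$ and one must use the specific structure of the Griffiths--Simon approximation to get uniform moment bounds. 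Second, and more seriously, your upper bound for $(iii)$ asserts that uniform exponential decay ``lets one replace the infinite lattice by a large finite $\Lambda$ with a remainder vanishing uniformly in $k$'', but you never exhibit an inequality bounding $\langle\tau_x\tau_y\rangle_{\rho_k,\beta}-\langle\tau_x\tau_y\rangle_{\Lambda,\rho_k,\beta}$; exponential decay of two-point functions does not by itself control this finite-to-infinite-volume gap. The route taken in \cite{Pan23+} goes through the random current representation (the $\mathsf{ZZGS}$ event mentioned in the paper's remark), using a first-moment argument to show the current from $x$ to $y$ rarely exits a large box --- this is precisely the quantitative content your ``remainder'' step needs and is where the infrared bound (for $d\geq 4$) or the uniform exponential decay (for $d=3$, as the paper's remark explains) enters. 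In short: right architecture, correct soft pieces, but the two hard analytic steps are stated rather than proved, and it is exactly those two steps that occupy the cited reference.
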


\begin{proof}[Proof of Theorem \textup{\ref{prop: intermediate step lower bound}} for general measures in the GS class] Let $\rho$ be a general measure in the GS class. Let $(\rho_k)_{k\geq 1}$ be a sequence of measures of the Ising type in the GS class which converges weakly to $\rho$. Using Theorem \ref{prop: intermediate step lower bound} together with Remarks \ref{rem: 1/2 to 1/4} and \ref{rem: uniform bound in the GS class}, we get $c_0,N_0$ such that, for every $k\geq 1$, $N_0\leq n\leq L'(\rho_k,\beta)$, and $\beta\leq \beta_c(\rho_k)$ ,
\begin{equation}\label{eq: proof extension general 1}
	\beta\sum_{\substack{x,y\in \Lambda_n\\y\sim x}}\left(\langle \tau_0\tau_x\rangle_{\rho_k,\beta}-\langle \tau_0\tau_{\mathcal{R}_n(x)}\rangle_{\rho_k,\beta}\right)\langle \tau_y \tau_{\mathcal{R}_n(y)}\rangle_{\rho_k,\beta}\geq c_0.
\end{equation}
Now, let $\beta<\beta_c(\rho)$ and $n\leq L(\rho,\beta)$. Thanks to Proposition \ref{prop: input limit phi4}, for $k$ large enough, one has that \eqref{eq: proof extension general 1} holds for such choice of $\beta,n$. The proof for $\beta<\beta_c(\rho)$ follows readily from taking $k$ to infinity and using Proposition \ref{prop: input limit phi4} $(iii)$. We then extend the result to $\beta_c(\rho)$ by continuity, taking the limit $\beta\rightarrow \beta_c(\rho)$.
\end{proof}

\begin{Rem} Proposition \ref{prop: input limit phi4} is proved for dimensions $d\geq 4$ in \cite{Pan23+}. This limitation arises from the argument used in the proof of $(iii)$, as detailed in \cite[Lemma~8.10]{Pan23+}. Importing the notations from the same paper, one needs to show that
\begin{equation}\label{eq: ending rem}
	\limsup_{n\rightarrow\infty}\limsup_{k\rightarrow \infty}\mathbb P^{xy}_{\rho_k,\beta}[\mathsf{ZZGS}_k(x,y;\ell,n,\infty)]=0.
\end{equation}
To prove that, one can use a first moment method together with the infrared bound (see \cite[Corollary~6.12]{Pan23+}). However, this bound not being sharp in dimension $3$, it is not sufficient to close the argument in this setup. It is possible to solve this issue by noticing that for $\beta<\beta_c(\rho)$, the exponential decay of $\langle \tau_0\tau_u\rangle_{\rho_k,\beta}$ is uniform in $k$. Indeed, using \eqref{eq: near-critical upper bound} and the fact that $\limsup L(\rho_k,\beta)\leq L(\rho,\beta)$: for $k$ large enough, if $u\in \mathbb Z^d$,
\begin{equation}
    \langle \tau_0\tau_u\rangle_{\rho_k,\beta}\leq C\exp\left(-c\frac{|u|}{2L(\rho,\beta)}\right),
\end{equation}
where $c,C>0$. One can then plug this bound in the strategy of \cite{Pan23+} to show that \eqref{eq: ending rem} holds when $d=3$.
\end{Rem}

%
%
%
%
%

\begin{Acknowledgements} We thank Alexis Prévost for stimulating discussions. We thank Trishen S. Gunaratnam, Tiancheng He, Gordon Slade, and two anonymous referees for useful comments. This project has received funding from the Swiss National Science Foundation, the NCCR SwissMAP, and the European Research Council (ERC) under the European Union’s Horizon 2020 research and innovation programme (grant agreement No. 757296). HDC acknowledges the support from the Simons collaboration on localization of waves. 
\end{Acknowledgements}

\begin{Conflict} The authors have no relevant financial or non-financial interests to disclose.	
\end{Conflict}

\bibliographystyle{alpha}
\bibliography{lb}

\newcommand{\etalchar}[1]{$^{#1}$}
\begin{thebibliography}{ADCTW19}

\bibitem[ABF87]{aizenman1987phase}
Michael Aizenman, David~J. Barsky, and Roberto Fern{\'a}ndez.
\newblock The phase transition in a general class of {I}sing-type models is
  sharp.
\newblock {\em Journal of Statistical Physics}, \textup{\textbf{47}}:343--374,
  1987.

\bibitem[ADC21]{ADC}
Michael Aizenman and Hugo Duminil-Copin.
\newblock Marginal triviality of the scaling limits of critical $4d$ {I}sing
  and $\varphi^4_4$ models.
\newblock {\em Annals of Mathematics}, \textup{\textbf{194}}(1):163--235, 2021.

\bibitem[ADCS15]{ADCS}
Michael Aizenman, Hugo Duminil-Copin, and Vladas Sidoravicius.
\newblock Random currents and continuity of {I}sing model’s spontaneous
  magnetization.
\newblock {\em Communications in Mathematical Physics},
  \textup{\textbf{334}}(2):719--742, 2015.

\bibitem[ADCTW19]{aizenman2019emergent}
Michael Aizenman, Hugo Duminil-Copin, Vincent Tassion, and Simone Warzel.
\newblock Emergent planarity in two-dimensional {I}sing models with
  finite-range interactions.
\newblock {\em Inventiones Mathematicae}, \textup{\textbf{216}}(3):661--743,
  2019.

\bibitem[AF86]{aizenman1986critical}
Michael Aizenman and Roberto Fern{\'a}ndez.
\newblock On the critical behavior of the magnetization in high-dimensional
  {I}sing models.
\newblock {\em Journal of Statistical Physics},
  \textup{\textbf{44}}(3-4):393--454, 1986.

\bibitem[AG83]{aizenman1983renormalized}
Michael Aizenman and Ross Graham.
\newblock On the renormalized coupling constant and the susceptibility in
  $\varphi^4_4$ field theory and the {I}sing model in four dimensions.
\newblock {\em Nuclear Physics B}, \textup{\textbf{225}}(2):261--288, 1983.

\bibitem[Aiz82]{A}
Michael Aizenman.
\newblock Geometric analysis of $\varphi^4$ fields and {I}sing models. {P}arts
  {I} and {II}.
\newblock {\em Communications in Mathematical Physics},
  \textup{\textbf{86}}(1):1--48, 1982.

\bibitem[BBS14]{bauerschmidt2014scaling}
Roland Bauerschmidt, David~C. Brydges, and Gordon Slade.
\newblock Scaling limits and critical behaviour of the 4-dimensional
  $n$-component $|\varphi|^4$ spin model.
\newblock {\em Journal of Statistical Physics}, \textup{\textbf{157}}:692--742,
  2014.

\bibitem[BBS19]{BBS19}
Roland Bauerschmidt, David~C. Brydges, and Gordon Slade.
\newblock {\em Introduction to a {R}enormalisation {G}roup {M}ethod}, volume
  \textup{\textbf{2242}}.
\newblock Springer Nature, 2019.

\bibitem[BHH21]{brydges2021continuous}
David Brydges, Tyler Helmuth, and Mark Holmes.
\newblock The continuous-time lace expansion.
\newblock {\em Communications on Pure and Applied Mathematics},
  \textbf{74}(11):2251--2309, 2021.

\bibitem[Bis09]{B}
Marek Biskup.
\newblock Reflection positivity and phase transitions in lattice spin models.
\newblock In {\em Methods of Contemporary Mathematical Statistical Physics},
  pages 1--86. Springer, 2009.

\bibitem[CHI15]{chelkak2015conformal}
Dmitry Chelkak, Cl{\'e}ment Hongler, and Konstantin Izyurov.
\newblock Conformal invariance of spin correlations in the planar {I}sing
  model.
\newblock {\em Annals of Mathematics}, \textbf{181}(3):1087--1138, 2015.

\bibitem[CS15]{chen2015critical}
Lung-Chi Chen and Akira Sakai.
\newblock Critical two-point functions for long-range statistical-mechanical
  models in high dimensions.
\newblock {\em The Annals of Probability}, \textup{\textbf{43}}(2):639--681,
  2015.

\bibitem[CS19]{chen2019critical}
Lung-Chi Chen and Akira Sakai.
\newblock Critical two-point function for long-range models with power-law
  couplings: The marginal case for $d\geq d_c$.
\newblock {\em Communications in Mathematical Physics},
  \textup{\textbf{372}}(2):543--572, 2019.

\bibitem[DC17]{duminil2017lectures}
Hugo Duminil-Copin.
\newblock Lectures on the {I}sing and {P}otts models on the hypercubic lattice.
\newblock In {\em PIMS-CRM Summer School in Probability}, pages 35--161.
  Springer, 2017.

\bibitem[DCT16]{DCT}
Hugo Duminil-Copin and Vincent Tassion.
\newblock A new proof of the sharpness of the phase transition for {B}ernoulli
  percolation and the {I}sing model.
\newblock {\em Communications in Mathematical Physics},
  \textup{\textbf{343}}(2):725--745, 2016.

\bibitem[ESPP{\etalchar{+}}12]{el2012solving}
Sheer El-Showk, Miguel~F. Paulos, David Poland, Slava Rychkov, David
  Simmons-Duffin, and Alessandro Vichi.
\newblock Solving the $3d$ {I}sing model with the conformal bootstrap.
\newblock {\em Physical Review D}, \textbf{86}(2):025022, 2012.

\bibitem[ESPP{\etalchar{+}}14]{el2014solving}
Sheer El-Showk, Miguel~F. Paulos, David Poland, Slava Rychkov, David
  Simmons-Duffin, and Alessandro Vichi.
\newblock Solving the $3d$ {I}sing model with the conformal bootstrap {II}.
  $c$-{M}inimization and precise critical exponents.
\newblock {\em Journal of Statistical Physics}, \textbf{157}:869--914, 2014.

\bibitem[FILS78]{FILS}
J{\"u}rg Fr{\"o}hlich, Robert Israel, Elliot~H. Lieb, and Barry Simon.
\newblock Phase transitions and reflection positivity. {I}. {G}eneral theory
  and long range lattice models.
\newblock In {\em Statistical Mechanics}, pages 213--246. Springer, 1978.

\bibitem[Fr{\"o}82]{frohlich1982triviality}
J{\"u}rg Fr{\"o}hlich.
\newblock On the triviality of $\lambda \phi^4_d$ theories and the approach to
  the critical point in $d>4$ dimensions.
\newblock {\em Nuclear Physics B}, \textup{\textbf{200}}(2):281--296, 1982.

\bibitem[FSS76]{FSS}
J{\"u}rg Fr{\"o}hlich, Barry Simon, and Thomas Spencer.
\newblock Infrared bounds, phase transitions and continuous symmetry breaking.
\newblock {\em Communications in Mathematical Physics},
  \textup{\textbf{50}}(1):79--95, 1976.

\bibitem[GHS70]{GHS}
Robert~B. Griffiths, Charles~A. Hurst, and Seymour Sherman.
\newblock Concavity of magnetization of an {I}sing ferromagnet in a positive
  external field.
\newblock {\em Journal of Mathematical Physics},
  \textup{\textbf{11}}(3):790--795, 1970.

\bibitem[GK85]{gawedzki1985massless}
Krzysztof Gawedzki and Antti Kupiainen.
\newblock Massless lattice $\varphi^4_4$ theory: {R}igorous control of a
  renormalizable asymptotically free model.
\newblock {\em Communications in Mathematical Physics}, \textbf{99}:197--252,
  1985.

\bibitem[GPPS22]{gunaratnam2022random}
Trishen~S. Gunaratnam, Christoforos Panagiotis, Romain Panis, and Franco
  Severo.
\newblock Random tangled currents for $\varphi^4$: translation invariant
  {G}ibbs measures and continuity of the phase transition.
\newblock {\em arXiv preprint arXiv:2211.00319}, 2022.

\bibitem[Gri67]{griffiths1967correlations}
Robert~B. Griffiths.
\newblock Correlations in {I}sing ferromagnets. {I}.
\newblock {\em Journal of Mathematical Physics},
  \textup{\textbf{8}}(3):478--483, 1967.

\bibitem[Gri70]{G70}
Robert~B. Griffiths.
\newblock Dependence of critical indices on a parameter.
\newblock {\em Physical Review Letters}, \textup{\textbf{24}}(26):1479, 1970.

\bibitem[HMS23]{hutchcroft2023high}
Tom Hutchcroft, Emmanuel Michta, and Gordon Slade.
\newblock High-dimensional near-critical percolation and the torus plateau.
\newblock {\em The Annals of Probability}, \textbf{51}(2):580--625, 2023.

\bibitem[Kad93]{K93}
Leo~P. Kadanoff.
\newblock Critical behavior. {U}niversality and scaling.
\newblock In {\em From Order To Chaos: Essays: Critical, Chaotic and
  Otherwise}, pages 222--239. World Scientific, 1993.

\bibitem[KO49]{kaufman1949crystal}
Bruria Kaufman and Lars Onsager.
\newblock Crystal statistics. {III}. {S}hort-range order in a binary {I}sing
  lattice.
\newblock {\em Physical Review}, \textbf{76}(8):1244, 1949.

\bibitem[KPP24]{krachun2023scaling}
Dmitrii Krachun, Christoforos Panagiotis, and Romain Panis.
\newblock Scaling limit of the cluster size distribution for the random current
  measure on the complete graph.
\newblock {\em Electronic Journal of Probability}, \textbf{29}:1--24, 2024.

\bibitem[KPSDV16]{kos2016precision}
Filip Kos, David Poland, David Simmons-Duffin, and Alessandro Vichi.
\newblock Precision islands in the {I}sing and ${O}({N})$ models.
\newblock {\em Journal of High Energy Physics}, \textbf{2016}(8):1--16, 2016.

\bibitem[Lie04]{lieb2004refinement}
Elliott~H. Lieb.
\newblock A refinement of {S}imon’s correlation inequality.
\newblock {\em Statistical Mechanics: Selecta of Elliott H. Lieb}, pages
  447--455, 2004.

\bibitem[Liu23]{liu2023general}
Yucheng Liu.
\newblock A general approach to massive upper bound for two-point function with
  application to self-avoiding walk torus plateau.
\newblock {\em arXiv preprint arXiv:2310.17321}, 2023.

\bibitem[LPS24a]{LiuPanisSlade}
Yucheng Liu, Romain Panis, and Gordon Slade.
\newblock The torus plateau for the high-dimensional {I}sing model.
\newblock {\em arXiv preprint arXiv:2405.17353}, 2024.

\bibitem[LPS24b]{liu2024universal}
Yucheng Liu, Jiwoon Park, and Gordon Slade.
\newblock Universal finite-size scaling in high-dimensional critical phenomena.
\newblock {\em arXiv preprint arXiv:2412.08814}, 2024.

\bibitem[LS24]{liu2024near}
Yucheng Liu and Gordon Slade.
\newblock Near-critical and finite-size scaling for high-dimensional lattice
  trees and animals.
\newblock {\em arXiv preprint arXiv:2412.05491}, 2024.

\bibitem[MMS77]{MMS}
Alain Messager and Salvador Miracle-Solé.
\newblock Correlation functions and boundary conditions in the {I}sing
  ferromagnet.
\newblock {\em Journal of Statistical Physics},
  \textup{\textbf{17}}(4):245--262, 1977.

\bibitem[MPS23]{michta2023boundary}
Emmanuel Michta, Jiwoon Park, and Gordon Slade.
\newblock Boundary conditions and universal finite-size scaling for the
  hierarchical $|\varphi|^4$ model in dimensions 4 and higher.
\newblock {\em arXiv preprint arXiv:2306.00896}, 2023.

\bibitem[MW73]{mccoy1973two}
Barry~M. McCoy and Tai~Tsun Wu.
\newblock {\em The two-dimensional {I}sing model}.
\newblock Harvard University Press, 1973.

\bibitem[Ons44]{onsager1944crystal}
Lars Onsager.
\newblock Crystal statistics. {I}. {A} two-dimensional model with an
  order-disorder transition.
\newblock {\em Physical Review}, \textbf{65}(3-4):117, 1944.

\bibitem[Pan23]{Pan23+}
Romain Panis.
\newblock Triviality of the scaling limits of critical {I}sing and $\varphi^4$
  models with effective dimension at least four.
\newblock {\em arXiv preprint arXiv:2309.05797}, 2023.

\bibitem[Pan24]{PanisThesis}
Romain Panis.
\newblock {\em Applications of {P}aths {E}xpansions to {S}tatistical
  {M}echanics}.
\newblock PhD thesis, Université de Genève, 2024.

\bibitem[RSDZ17]{rychkov2017non}
Slava Rychkov, David Simmons-Duffin, and Bernardo Zan.
\newblock Non-gaussianity of the critical $3d$ {I}sing model.
\newblock {\em SciPost Physics}, \textup{\textbf{2}}(1):001, 2017.

\bibitem[Sak07]{sakai2007lace}
Akira Sakai.
\newblock Lace expansion for the {I}sing model.
\newblock {\em Communications in Mathematical Physics},
  \textup{\textbf{272}}(2):283--344, 2007.

\bibitem[Sak15]{sakai2015application}
Akira Sakai.
\newblock Application of the lace expansion to the $\varphi^4$ model.
\newblock {\em Communications in Mathematical Physics}, \textbf{336}:619--648,
  2015.

\bibitem[Sak22]{sakai2022correct}
Akira Sakai.
\newblock Correct bounds on the {I}sing lace-expansion coefficients.
\newblock {\em Communications in Mathematical Physics},
  \textbf{392}(3):783--823, 2022.

\bibitem[SG73]{simon1973varphi}
Barry Simon and Robert~B. Griffiths.
\newblock The $\varphi^4_2$ field theory as a classical {I}sing model.
\newblock {\em Communications in Mathematical Physics},
  \textup{\textbf{33}}:145--164, 1973.

\bibitem[Sim80]{simon1980correlation}
Barry Simon.
\newblock Correlation inequalities and the decay of correlations in
  ferromagnets.
\newblock {\em Communications in Mathematical Physics},
  \textup{\textbf{77}}(2):111--126, 1980.

\bibitem[Sla23]{slade2023near}
Gordon Slade.
\newblock The near-critical two-point function and the torus plateau for weakly
  self-avoiding walk in high dimensions.
\newblock {\em Mathematical Physics, Analysis and Geometry}, \textbf{26}(1):6,
  2023.

\bibitem[Smi10]{smirnov2010conformal}
Stanislav Smirnov.
\newblock Conformal invariance in random cluster models. {I}. {H}olomorphic
  fermions in the {I}sing model.
\newblock {\em Annals of Mathematics}, \textbf{172}(2):1435--1467, 2010.

\bibitem[ST16]{slade2016critical}
Gordon Slade and Alexandre Tomberg.
\newblock Critical correlation functions for the $4$-dimensional weakly
  self-avoiding walk and $n$-component $|\varphi|^4$ model.
\newblock {\em Communications in Mathematical Physics}, \textbf{342}:675--737,
  2016.

\bibitem[Wu66]{wu1966theory}
Tai~Tsun Wu.
\newblock Theory of {T}oeplitz determinants and the spin correlations of the
  two-dimensional {I}sing model. {I}.
\newblock {\em Physical Review}, \textbf{149}(1):380, 1966.

\bibitem[Yan52]{yang1952spontaneous}
Chen~Ning Yang.
\newblock The spontaneous magnetization of a two-dimensional {I}sing model.
\newblock {\em Physical Review}, \textbf{85}(5):808, 1952.

\end{thebibliography}
\end{document}